% KEYWORDS: Grothendieck, diagram, "pursuing stacks", "a la poursuite de champs", Quillen Theorem A, sketch, localisateur fondamental, localization, rectification, dense functor
% AMS subject classifications:
\documentclass[10pt,a4paper]{article}

\usepackage[theorem/counter/prefix = subsection]{mac-bundle}
\usepackage{flags}
\usepackage{bookmark} 
\usepackage{geometry}[margin=0cm]

\DeclareTheorem[plain]{question}{Question}

\newwrapper {macros}
\newwrapper {citetree}

% MATHS MACROS

\input{sketch.macro}
\newwrapper{paths}
\begin{paths}
  \makeatletter
  
  \newcommand \pathdef [3] {%
    \expandafter\def\csname @paths@#1\endcsname{\cite[#3]{#2}}%
  }
  
  \newcommand \pathcite [1] {%
    \ifcsdef{@paths@#1}{%
      \csname @paths@#1\endcsname%
    }{%
      \PackageWarning{paths}{Path "#1" not found}%
    }%
  }
  \makeatother
\end{paths}

\pathdef {cis/fibration/right/is-smooth} {Cisinski:HigherCats} {None}
\pathdef {cis/fibre-product/in-slice} {Cisinski:HigherCats} {Prop.~6.6.7}
\pathdef {cis/adjoint/criterion/slice} {Cisinski:HigherCats} {Prop.~6.1.11}
\pathdef {cis/localization/of-slice} {Cisinski:HigherCats} {Cor.~7.6.13}
\pathdef {htt/cofinal/definition} {HTT} {Def.~4.1.1.1}
\pathdef {cis/model/functor-category/is-category} {Cisinski:HigherCats} {Cor.~3.2.10}
\pathdef {cis/presheaf/small/definition} {cisinski2019higher} {Def.~6.1.18}
\pathdef {cis/kan-extension/base-change/smooth-right} {cisinski2019higher} {Thm.~4.4.26}
\pathdef {htt/limit/of-functors} {HTT} {Cor.~5.1.2.3}
\pathdef {hag/context/definition} {toen2008homotopical} {Def.~1.3.2.13}
\pathdef {universe/axiom/inaccessible-cardinal} {williams1969grothendieck} {}
\pathdef {hag/stack/n-geometric/definition} {toen2008homotopical} {Def.~1.3.3.1}
\pathdef {htt/localization/left-exact/criterion} {HTT} {Prop.~6.2.1.1}
\pathdef {cis/presheaf/colimit-of-yoneda-is-final} {cisinski2019higher} {Prop.~6.2.13}
\pathdef {cis/colimit/definition} {cisinski2019higher} {Def.~6.2.2}
\pathdef {htt/factorization-system/definition} {HTT} {Def.~5.2.8.8}
\pathdef {ha/twisted-arrow/is-classified-by-yoneda} {HA} {Prop.~5.2.1.11}
\pathdef {cis/universe/homotopy-hypothesis} {cisinski2019higher} {Thm.~7.8.9}
\pathdef {cis/functions-fibrations/integral/essentially-surjective} {cisinski2019higher} {Cor.~5.4.10}
\pathdef {cis/fibration/right/slice-criterion} {cisinski2019higher} {Prop.~4.1.2}
\pathdef {cis/functions-fibrations/integral/is-fully-faithful} {cisinski2019higher} {Cor.~5.4.7}
\pathdef {cis/fibration/universal/definition} {cisinski2019higher} {Def.~5.2.3}
\pathdef {cis/homotopy-colimit/is-colimit} {cisinski2019higher} {Remark 7.9.10}
\pathdef {cis/yoneda-lemma} {cisinski2019higher} {Thm.~5.8.13}
\pathdef {htt/model/homotopy-limit/is-limit} {HTT} {Thm.~4.2.4.1}
\pathdef {htt/cone/lifting/unique} {HTT} {Prop.~2.1.2.5}
\pathdef {htt/cone/transport} {HTT} {Prop.~2.1.2.1}
\pathdef {htt/topos/descent} {HTT} {Thm.~6.1.3.9}
\pathdef {htt/hypercover/n-coskeletal/is-effective} {HTT} {Lemma 6.5.3.9}
\pathdef {htt/hypercover/hypercomplete/as-localization} {HTT} {Cor.~6.5.3.13}
\pathdef {hag/stack/n-geometric/as-quotient} {toen2008homotopical} {Prop.~1.3.4.2}
\pathdef {htt/yoneda/preserves-limits} {HTT} {Prop.~5.1.3.2}
\pathdef {hag/topos/segal-groupoid/definition} {toen2008homotopical} {Def.~1.3.1.6}
\pathdef {hag/stack/deligne-mumford/definition} {toen2008homotopical} {Def.~2.1.1.4}
\pathdef {htt/bousfield-localization/recognition/local-object} {HTT} {Prop.~5.5.4.2.(1)}
\pathdef {htt/comma/source-map-is-cartesian} {HTT} {Cor.~2.4.7.12}
\pathdef {htt/idempotent/walking} {HTT} {Def.~4.4.5.2}
\pathdef {htt/compact/generation/from-localization} {HTT} {Cor.~5.5.7.3}
\pathdef {htt/bousfield-localization/existence} {HTT} {Prop.~5.5.4.15}
\pathdef {cis/bousfield-localization/existence} {cisinski2019higher} {Prop.~7.11.4}
\pathdef {cis/presheaf/map-out} {cisinski2019higher} {Thm.~6.3.13}
\pathdef {htt/presheaf/map-out} {HTT} {Thm.~5.1.5.6}
\pathdef {htt/bousfield-localization/map-out} {HTT} {Prop.~5.5.4.20}
\pathdef {htt/colimit-sketch/localization} {HTT} {Prop.~5.6.3.2}
\pathdef {htt/presentable/definition} {HTT} {311}
\pathdef {cis/presentable/definition} {cisinski2019higher} {7.11.5}
\pathdef {htt/sifted/definition} {HTT} {Def.~5.5.8.1}
\pathdef {htt/presheaf/map-out/right-adjoint/is-pullback} {HTT} {Prop.~5.2.6.3}
\pathdef {htt/sifted/colimits-preserve-products} {HTT} {Lemma 5.5.8.11}
\pathdef {htt/filtered/criterion/left-exact} {HTT} {Prop.~5.3.3.3}
\pathdef {htt/presentable/criterion} {HTT} {Thm.~5.5.1.1}
\pathdef {htt/presheaf/map-out/extension/criterion} {HTT} {Lemma 5.1.5.5}
\pathdef {htt/kan-ext/definition} {HTT} {Def.~4.3.3.2}
\pathdef {htt/kan-ext/definition/fully-faithful} {HTT} {Def.~4.3.2.2}
\pathdef {cis/kan-ext/local-criterion} {cisinski2019higher} {Prop.~6.4.8}
\pathdef {htt/universe/locally-small/definition} {HTT} {below Prop.~5.4.1.7}
\pathdef {htt/universe} {HTT} {\S1.2.15}
\pathdef {htt/presentable/representable-iff-continuous} {HTT} {Prop.~5.5.2.2}
\pathdef {htt/slice/definition/joyal} {HTT} {Prop.~1.2.9.2}
\pathdef {htt/universe/locally-small/exponential} {HTT} {Ex.~5.4.1.8}
\pathdef {cis/localization/bousfield/definition} {cisinski2019higher} {Def.~7.11.1}
\pathdef {cis/localization/bousfield/criterion} {cisinski2019higher} {Prop.~7.11.2}
\pathdef {htt/compact/presheaf/criterion} {HTT} {Prop.~5.3.4.17}
\pathdef {htt/presentable/prl-prr} {HTT} {Cor.~5.5.3.4}

\addbibresource{adamek.bib}
\addbibresource{man.bib}
\addbibresource{universe.bib}

%\title{Building things out of other things}
\title{Adjoining colimits}
\author{Andrew W. Macpherson} 
\begin{document}

\maketitle

\begin{abstract}

  This paper develops a theory of colimit sketches `with constructions' in higher category theory, formalising the input to the ubiquitous procedure of adjoining specified `constructible' colimits to a category such that specified `relation' colimits are enforced (or preserved).
  From a more technical standpoint, sketches are a way to describe dense functors using techniques from the homotopy theory of diagrams.
  
  We establish basic properties of diagrams in an $\infty$-category $C$ as a model for presheaves on $C$ and Bousfield localisations thereof, discuss extensions of functors and adjunctions, and equivalences of sets of diagrams.
  We introduce categories of presheaves which are `constructible in one step' by a set of diagrams and explore, via well-known examples, when constructible cocompletion is idempotent, i.e.~when any \emph{iterated} construction can be completed in one step.
  
\end{abstract}

\tableofcontents
\section{Introduction}

Every construction in higher category theory has a universal property. Typically, a universal property has the following form:
\begin{quote}

  \emph{$C'$ is the universal object under $C$ that admits \texttt{<construction>} such that \texttt{<relation>} is enforced.}\footnote{Or `preserved' if \texttt{<relation>} is already true in $C$.}
  
\end{quote}
This paper sets out a language of \emph{colimit sketches with constructions} which formalises this statement in the case that $C$ and $C'$ are $\infty$-categories and both \emph{relations} and \emph{constructions} can be expressed using colimits.
We describe how to exhibit $C'$ as a category of \emph{modules} for a sketch on $C$ and discuss its universal property and methods for `diagrammatic' manipulation of its objects.
Our language makes explicit the r\^ole colimit diagrams play as \emph{presentations} of objects of $\infty$-categories, expressing how they are put together from objects of a dense subcategory.  

\paragraph{What is this good for?}
The language developed in this paper may be useful to theory builders embarking on a construction of their own `designer' $\infty$-category.
Colimit sketches in higher category theory encompass numerous fundamental examples:
\begin{itemize}
  
  \item
    Non-Abelian derived categories \cite[\S5.5.8]{HTT} are universal among categories that admit \emph{sifted colimits} (and not preserving any specified relations). Alternatively, they are universal among categories admitting \emph{all colimits} and preserving \emph{finite coproducts}.
    
  \item
    The category of spaces with hypoabelian fundamental group is universal among categories under $\point$ that admit \emph{all colimits} and \emph{contract acyclic spaces} \cite{farjoun2006cellular}. (This example is culturally significant because the associated localisation functor is the Quillen plus construction.)
    
  \item 
    Morel-Voevodsky's category of motivic spaces, resp.~Robalo's category of non-commutative motives are universal among categories under $\Scheme$, resp.~$\mathbf{nc}\Scheme$, admitting \emph{all colimits} such that \emph{Nisnevich descent} is preserved and \emph{$\mathbb{A}^1$-localisation} is enforced \cite{robalo2012noncommutative}.\footnote{The Cisinski-Tabuada construction \cite{Blumberg_2013} admits a similar, but dual, description in terms of attaching limits.}
  
  \item 
    The category of $n$-geometric stacks is the universal category under the category of $(n-1)$-geometric stacks that admits \emph{descent along $(n-1)$-geometric Segal groupoids} such that \emph{\'etale descent} is preserved \cite[\S1.3.4]{toen2008homotopical}. 
      
\end{itemize}
New examples are cropping up all the time --- to name just a few, new applications of finite product theories \cite{carchedi2013theories}, new flavours of motives \cite{kahn2020motives}, new flavours of stack theory \cite{nuiten2018lie}.
In all cases, their universal properties are the starting point of many arguments about these objects. 

Even in situations where the ultimate object of interest is not a category but something simpler like a vector space or number, it is common to introduce $\infty$-categories defined by properties of, or very similar to, the preceding form as an auxiliary tool. 
Indeed, this is precisely the motivation for defining the preceding examples which are designed to study non-Abelian cohomology, homotopy groups of group completions, motivic cohomology or algebraic K-theory, and moduli problems with higher symmetries, respectively.

\subsection{What is done in this paper}

The main contributions of this paper are as follows:
\begin{itemize}
  
  \item 
    Introduce a homotopy theory of higher categorical diagrams which formalises the r\^ole of $\Cat\slice C$ as a model for $\P(C)$ and its localisations (\S\ref{diagrams/}) in a way that mirrors Grothendieck's development of the theory of \emph{localisateur fondamentals}.
    
  \item 
    Develop the language of sketches \emph{\`a la} Ehresmann; develop properties of dense functors and apply this to obtain extension theorems and universal properties of categories of modules over a sketch; provide a formula (`plus' construction) for the localisation functor (\S\S\ref{sketch/}, \ref{construction/}).
    
  \item 
    Various applications of the above to what may collectively be termed `classification' questions; discussion of Morita equivalence of theories (\S\ref{saturation/}) and localisations of $\Space$ (\S\ref{sketch/on-point/}); applications to iterated colimits (\S\ref{construction/universal/}).
    
\end{itemize}

\subsubsection*{Sketches and modules}

The main object of study of this paper is a \emph{sketch with constructions}. 
The name is, of course, derived from the `esquisses' of Ehresmann \cite{ehresmann1968esquisses}.
Sketches with constructions are a formalisation of the idea of $\infty$-categories with \texttt{<relation>}s and \texttt{<construction>}s from the introductory paragraph.
They provide a convenient language for describing the construction of a large class of $\infty$-categories with universal properties, and for presenting objects in terms of a dense subcategory.

%The following definitions are aggregated from \S\ref{sketch/} and \S\ref{construction/diagram/}; see there for more details.

\begin{definition}
\label{main/definition}

  A \emph{sketch with constructions} is a triple $(C\sep R\sep K)$ consisting of:
  \begin{itemize}
    \item an $\infty$-category $C$ (whose objects are called \emph{cells} (thinking topologically) or \emph{symbols} (thinking logically));
    \item a set $R$ of \emph{conical diagrams} in $C$ (the \emph{relations});
    \item a set $K$ of \emph{diagrams} in $C$ (the \emph{constructions}). In this paper we assume that the singleton diagrams in $C$ belong to $K$.
  \end{itemize}
  If $K$ is the set of all diagrams in $C$ (within a particular universe which also contains $C$), then we may suppress it from the notation and refer to the pair $(C\sep R)$ simply as a \emph{sketch}.
  We often refer to cells and relations together with a single script symbol $\sk C\defeq (C\sep R)$.
  Categories $\Fun((C\sep R\sep K)\sep(C'\sep R'\sep K'))$ of morphisms between sketches with constructions are defined in the obvious way.
  
\end{definition}

In light of this definition, we can reformulate the universal property from the opening paragraph more precisely as follows:
\begin{quote}

  \emph{$C'$ is the universal $\infty$-category under $C$ that admits $K$-colimits and in which $R$-cones become colimit cones.}
  
\end{quote}
The theory of left Bousfield localisations and local objects of $\infty$-categories \cite{HTT} tell us to construct $C'$ as a full subcategory of a category of $R$-\emph{continuous} presheaves of $\infty$-groupoids on $C$.
These presheaves are the \emph{models} of the sketch.
The structural functor $C\rightarrow C'$ is dense, and left Kan extension provides unique $(R\cup K)$-colimit preserving extensions of functors, and in fact, these extensions preserve all colimits of $R$-continuous presheaves that happen to have apex isomorphic to a $K$-colimit.
The localisation from presheaves to $R$-continuous presheaves can be expressed as a transfinite, convergent iteration of a `plus' construction which generalises the well-known Quillen plus construction and the plus construction for a Grothendieck topology. 

These methods are the bread and butter of the presentable $\infty$-category theorist, and in fact much of the reasoning applies to full subcategories of any Bousfield localisation of a presheaf category.
Nonetheless, presentations in which the weak equivalences and `constructible' objects are defined in terms of diagrams seem to be ubiquitous and important enough to warrant spelling out this line of reasoning in the \emph{sketchable} case, and studying its application to major examples, just as the classical category theorists did for (co)limit sketches on 1-categories.

\begin{comment}
\begin{definition}

  A \emph{model} or \emph{module} for a sketch $\sk C \defeq (C\sep R)$ is a presheaf of $\infty$-groupoids which takes relations to limits. 
  %
  Modules of $\sk C=(C\sep R)$ form a full subcategory $\Mod(\sk C)=\Mod_R(C)\subseteq\P(C)$ of presheaves on $C$.
  %
  A module is $K$-\emph{constructible} if it is equivalent to a colimit in $\Mod(\sk C)$ of a construction (composed with the Yoneda functor).
  %
  The category of $K$-constructible modules on $\sk C$ is denoted $\Mod^K(\sk C)\subseteq\Mod(\sk C)$.
  %
  Constructible modules are naturally a sketch with relations the colimits of elements of $K$.
  %
  A sketch with constructions is \emph{constructibly cocomplete} if it is equivalent, via its Yoneda functor, to its category of constructible modules.

\end{definition}
\end{comment}

Here is a selection of the results we get about these categories of modules.\footnote{In this introduction and throughout the paper, we make systematic use of typical ambiguity and avoid specifying universes. This introduction is written in such a way that the reader may choose a favourite universe $\Universe$ and resolve every ambiguous term by inserting $\Universe$. Sets are not assumed $\Universe$-small unless specified.}

\begin{theorem}[Properties of categories of modules]

  Let $(\sk C\sep K_\sk C)$, $(\sk D\sep K_\sk D)$ be small sketches with constructions, $f:(\sk C\sep K_\sk C) \rightarrow (\sk D\sep K_\sk D)$ a functor.
  \begin{enumerate}
    \item \eqref{constructible/universal}.
      If $(\sk D\sep K_\sk D)$ is constructibly cocomplete (Def.~\ref{construction/cocomplete/definition}), restriction along $\Yoneda_\sk C$ induces an equivalence of categories
      \[
        \Fun(\Mod^{K_\sk C}(\sk C) \sep (\sk D\sep K_\sk D)) \quad \tilde\rightarrow \quad 
        \Fun((\sk C\sep K_\sk C) \sep (\sk D\sep K_\sk D)).
      \]
      The inverse equivalence takes a functor $\sk C\rightarrow \sk D$ to its left Kan extension along $\Yoneda_\sk C$.
      This extension preserves all colimits of $\sk C$-modules which are representable in $\Mod^{K_\sk C}(\sk C)$.
      \hfill \emph{Extension}
      
    \item 
      $f$ exhibits $\sk D$ as a category of constructible modules for $(\sk C\sep K_{\sk C})$ if and only if the left Kan extension 
      \[
        f^\dagger \defeq f_!\Yoneda_{\sk C}:\sk D\rightarrow \Mod(\sk C)
      \]
      is fully faithful with image spanned by the $K$-constructible presheaves. \hfill \emph{Recognition}
      
    \item \eqref{essential/criteria}, \eqref{essential/fully-faithful}.
      The restriction functor $f^*:\Mod(\sk D)\rightarrow\Mod(\sk C)$ admits a right adjoint if and only if restriction $f^\dagger:\sk D\rightarrow \Mod(\sk C)$ takes $R(\sk D)$-cones to colimits.
      In this case, $f_!$ is fully faithful if and only if $f^\dagger f$ equals the Yoneda functor $\Yoneda_\sk C:\sk C\rightarrow \Mod(\sk C)$.
      \hfill \emph{Essential adjunction}
      
    \item \eqref{constructible/transitive}.
      Denote by $K-\colimit$ the set of conical diagrams in $\Mod^K(\sk C)$ which are colimits of elements of $K$, and consider $\Mod^K(\sk C)$ as a sketch with relations $ R(\sk C)\cup K-\colimit$.
      Then there is a unique equivalence 
      \[
        \Mod\left( \Mod^K(\sk C) \right) \cong \Mod(\sk C)
      \]
      compatible with the inclusion of $\Mod^K(\sk C)$. \hfill \emph{Transitivity}
      
  \end{enumerate}

\end{theorem}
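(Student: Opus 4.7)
The four parts share a common engine: the Yoneda embedding $\Yoneda_\sk C : \sk C \rightarrow \Mod(\sk C)$ is the universal $R(\sk C)$-respecting functor from $\sk C$ into a cocomplete category, and its image is dense. My plan is to treat (1) as the linchpin, derive (2) and (3) by adjoint-functor formalities, and handle (4) by a density-plus-restriction argument. For (1) \emph{Extension}: by the definition of constructibility, every object of $\Mod^{K_\sk C}(\sk C)$ is a $K_\sk C$-colimit of representables. A morphism $f : (\sk C\sep K_\sk C) \rightarrow (\sk D\sep K_\sk D)$ of sketches with constructions determines $f_! \Yoneda_\sk C : \sk C \rightarrow \Mod(\sk C) \rightarrow \sk D$, where the second arrow exists because $\sk D$ is constructibly cocomplete and hence admits the needed $K_\sk D$-colimits. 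Since $\sk C \hookrightarrow \Mod^{K_\sk C}(\sk C)$ is dense and any functor preserving $K_\sk C$-colimits is determined by its restriction to representables, restriction is fully faithful on the $K_\sk C$-cocontinuous functors. Essential surjectivity is established by building the extension object-by-object, sending a $K_\sk C$-colimit of representables in $\sk C$ to the corresponding $K_\sk D$-colimit in $\sk D$; one checks this is well-defined and sends $R(\sk C)$-cones to colimit cones using that $f$ is a morphism of sketches.

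For (2) \emph{Recognition}, the statement follows directly from (1): an equivalence $f^\dagger : \sk D \simeq \Mod^{K_\sk C}(\sk C)$ covering the Yoneda functor is exactly what it means to exhibit $\sk D$ as constructible modules. For (3) \emph{Essential adjunction}, apply the presentable adjoint functor theorem: $f^*$ always preserves limits, so the existence of a right adjoint to $f^*$ is equivalent to $f^*$ preserving colimits; unwinding via Yoneda, this reduces to the condition that $f^\dagger$ send $R(\sk D)$-cones to colimits in $\Mod(\sk C)$. Full faithfulness of the left adjoint $f_!$ is then detected on representables, where it amounts to asking that the natural map $\Yoneda_\sk C \rightarrow f^\dagger f$ be an equivalence.

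For (4) \emph{Transitivity}: restriction along $\Yoneda_\sk C : \sk C \rightarrow \Mod^K(\sk C)$ defines a functor $\Mod(\Mod^K(\sk C)) \rightarrow \Mod(\sk C)$; the inverse sends an $R(\sk C)$-continuous presheaf $F$ to $\Map_{\Mod(\sk C)}(-\sep F)$ restricted to $\Mod^K(\sk C)$, which carries $R(\sk C)$-cones to limits because $F$ does and carries $K$-colimit cones to limits because mapping out of a colimit is a limit. Density of $\sk C$ in $\Mod^K(\sk C)$ shows the two functors are mutually inverse.

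The main obstacle I foresee is the well-definedness argument in (1): a given object of $\Mod^{K_\sk C}(\sk C)$ may be presented as many different $K_\sk C$-colimits of representables, so the candidate extension must not depend on the presentation. The key ingredient is density, but making this precise seems to require first identifying $\Mod^{K_\sk C}(\sk C)$ with the free $K_\sk C$-cocompletion of $(\sk C\sep R)$ --- that is, encoding the extension property as a universal property of the whole subcategory, rather than checking it object-by-object. This is the technical heart of the theory and likely where most of the preparatory work in the body of the paper lies.
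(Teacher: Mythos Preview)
Your logical dependency graph is inverted relative to the paper's, and this inversion is precisely the source of the gap you correctly identify in part (1).

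The paper does \emph{not} treat Extension as the linchpin. Instead, the order is: first establish the essential-adjunction criteria (part (3), Propositions \ref{essential/criteria} and \ref{essential/fully-faithful}); then use these to prove Transitivity (part (4), Proposition \ref{constructible/transitive}); and only then derive Extension (part (1), Corollary \ref{constructible/universal}) by embedding $\sk D$ into $\Mod(\sk D)$ and invoking Transitivity together with the already-known universal property of the \emph{full} module category $\Mod(\sk C)$. The well-definedness problem you flag --- that an object of $\Mod^{K_\sk C}(\sk C)$ may have many presentations as a $K$-colimit of representables --- dissolves once you know that any $K$-colimit-preserving functor out of $\Mod^K(\sk C)$ into a cocomplete target is automatically a left Kan extension of its restriction to $\sk C$ (Proposition \ref{constructible/extension/uniqueness}), and this is proved by comparing with the Kan extension inside the full $\Mod(\sk C)$, where cocompleteness is available. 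Your instinct that one should ``encode the extension property as a universal property of the whole subcategory'' is right, but the paper achieves this by routing through Transitivity rather than by a direct object-by-object construction.

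For (4) specifically, your explicit-inverses approach is more hands-on than the paper's. The paper instead observes that $\Yoneda_{\sk C,K}:\sk C\to\Mod^K(\sk C)$ is an essential morphism of sketches with $\Yoneda_{\sk C,K}^\dagger\Yoneda_{\sk C,K}\cong\Yoneda_\sk C$ (Lemma \ref{constructible/yoneda/properties}), whence $\Yoneda_{\sk C,K,!}:\Mod(\sk C)\to\Mod(\Mod^K(\sk C))$ is fully faithful by Proposition \ref{essential/fully-faithful}; separately it is dense (being a left Kan extension of a dense functor along a dense functor), hence a Bousfield localisation, hence an equivalence. Your version could be made to work, but note that it uses the same density ingredient and would still need care to show the composite $\Mod(\sk C)\to\Mod(\Mod^K(\sk C))\to\Mod(\sk C)$ is the identity on all of $\Mod(\sk C)$, not just on representables --- which again comes down to a colimit-preservation argument of the kind the paper packages into the essential-functor formalism.

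One smaller point on (3): your claim that ``full faithfulness of $f_!$ is detected on representables'' is correct, but only \emph{because} $f$ is essential --- that hypothesis is what makes $f^*f_!$ colimit-preserving, so that invertibility of the unit $\id\to f^*f_!$ can be tested on the dense subcategory $\sk C$. You state the conclusion but skip the reason; the paper makes this explicit in the proof of Proposition \ref{essential/fully-faithful}.
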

Versions of some of these results appear in \cite[\S5.3.6]{HTT}, though $K$ is treated differently (cf.~\S\ref{previous-work/}).

\subsubsection*{Homotopy theory of diagrams}

Models of a sketch can be expressed in various ways as colimits of functors $U:I\rightarrow C$, and there are various general tactics for manipulating such diagrams to draw conclusions about $\Mod(\sk C)$.
A systematic way of expressing this is the observation that the $\infty$-category $\Cat\slice C$ of diagrams in $C$ is a \emph{model through localisation} for $\Mod(\sk C)$.
Our expectations for the behaviour of this model are informed by the philosophy of \emph{categorical homotopy theory} exemplified by Quillen's `Theorem A' and codified in Grothendieck's famous `pursuing stacks' manuscript.
Sure enough, there is a localisation $\Cat\slice C\rightarrow\P(C)[W^{-1}]$ whose set of weak equivalences shares the same basic properties as Grothendieck's \emph{localisateurs fondamentals} \cite[\#2.1.1]{maltsiniotis2005theorie}.

\begin{theorem}[Localisers from sketches \eqref{sketch/localizer/properties}] \label{main/localizer}

  Let $\sk C$ be a sketch.
  The fully faithful inclusion $\Mod(\sk C) \hookrightarrow \P(C) \hookrightarrow \Cat\slice C$ admits a left adjoint \emph{classifying module} construction
  \[
    |-|_\sk C:\Cat\slice C\rightarrow \Mod(\sk C)
  \]
  which takes a diagram to its colimit in $\Mod(\sk C)$.
  The set $W_\sk C\subseteq (\Cat\slice C)^{\Delta^1}$ of weak equivalences for this localisation enjoys the following properties:
  \begin{enumerate}
  
    \item
      $W_\sk C$ is strongly saturated.
      
    \item
      If $I$ has a final object $\omega$, then 
      \[
        [\omega:\point\rightarrow I\stackrel{U}{\rightarrow} C] \in W_\sk C
      \]
      for all $U:\Fun(I,C)$. \hfill{Contracts cells}
      
    \item 
      If $I\stackrel{h}{\rightarrow} J\rightarrow K\rightarrow C$ is such that
      \[
        [h\slice k:I\slice k\rightarrow J\slice k\rightarrow C]\in W_\sk C
      \]
      for all $k:K$, then $h\in W_\sk C$. \hfill{Quillen A}
  \end{enumerate}
  Moreover, $W_\sk C$ is \emph{of cellular generation}: it is minimal among strongly saturated sets of maps of diagrams containing cofinal maps and (a small subset of) $W_\sk C\cap(\Cat\coneslice C)$, where $\Cat\coneslice C$ is the set of conical diagrams in $C$, and satisfying Quillen A.
  
  Conversely, any subset of $(\Cat\slice C)^{\Delta^1}$ which is of cellular generation in this sense is the set of weak equivalences for a sketch on $C$.
  
\end{theorem}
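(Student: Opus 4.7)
The plan is to construct $|-|_\sk C$ as a composite of two reflections. The first, $\Cat\slice C\rightarrow\P(C)$, sends a diagram $U:I\rightarrow C$ to the colimit of $\Yoneda\circ U$; it is the left adjoint to the fully faithful inclusion $\P(C)\cong\RFib(C)\hookrightarrow\Cat\slice C$ and exhibits $\P(C)$ as the localisation of $\Cat\slice C$ at the cofinal maps. The second is the Bousfield reflection $\P(C)\rightarrow\Mod(\sk C)$ that inverts the $R$-cone maps on representables, whose existence is \pathcite{htt/bousfield-localization/existence}. Their composite computes the colimit of a diagram in $\Mod(\sk C)$, as claimed, and $W_\sk C$ is the preimage under this composite of the equivalences.

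The listed closure properties of $W_\sk C$ are mostly formal. \emph{Strong saturation} is automatic, since $W_\sk C$ is the preimage under a left adjoint to a presentable category of the class of equivalences. \emph{Contracts cells} follows because the inclusion $\omega:\point\hookrightarrow I$ of a final object is cofinal, so the corresponding diagram map is inverted already in $\P(C)$. For \emph{Quillen A}, the colimit of $U:I\rightarrow K\rightarrow C$ decomposes by the smooth base change formula \pathcite{cis/kan-extension/base-change/smooth-right} as an iterated colimit over $k:K$ of the colimits of the slices $U|_{I\slice k}$; applying this to both source and target of $h$, if the slice maps $h\slice k$ are in $W_\sk C$ for every $k$, then the induced map of total colimits in $\Mod(\sk C)$ is an equivalence, i.e.\ $h\in W_\sk C$.

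The main content, and the anticipated principal obstacle, is the \emph{cellular generation} assertion. For the forward direction one shows that $W_\sk C$ is contained in any strongly saturated class $W$ which contains cofinal maps, a specified small subset of $W_\sk C\cap\Cat\coneslice C$ (the $R$-cone maps), and satisfies Quillen A. I would proceed in two stages. First, since $\Mod(\sk C)\hookrightarrow\P(C)$ is the Bousfield localisation at the $R$-cone maps on representables, the maps inverted by the reflection $\P(C)\rightarrow\Mod(\sk C)$ are precisely the strongly saturated closure of these generators; any class $W$ containing them in $\Cat\slice C$ and closed under strong saturation therefore inverts everything that the composite $\Cat\slice C\rightarrow\Mod(\sk C)$ inverts, modulo lifting the cofinal-map localisation that produced $\P(C)$. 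Second, Quillen A is the tool that allows this lift: given arbitrary $h\in W_\sk C$, compare it via its slices with a canonical Grothendieck-construction presentation whose slices are (colimits of) representables, and invoke Quillen A to reduce verification of $h\in W$ to pointwise statements where only the preceding representable and $R$-cone generators appear. The converse direction is then formal: given a cellularly generated $W$ with conical generators $R$, the forward direction applied to $(C,R)$ gives $W_{(C,R)}\subseteq W$, while $W$ itself satisfies all the defining closure properties (strongly saturated, contains cofinal maps and $R$, Quillen A) that minimally characterise $W_{(C,R)}$, forcing $W\subseteq W_{(C,R)}$, hence equality.
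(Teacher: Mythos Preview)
Your approach is essentially the paper's: construct $|-|_\sk C$ as the composite $\Cat\slice C\rightarrow\P(C)\rightarrow\Mod(\sk C)$, derive properties (1)--(3) as you do, and for cellular generation pass through $\P(C)$. The paper packages the heart of the cellular-generation argument into an explicit lemma: if $W\subseteq(\Cat\slice C)^{\Delta^1}$ is strongly saturated and contains cofinal maps, then $W$ is the preimage under $|-|_C$ of its image $\bar W$ in $\P(C)^{\Delta^1}$; if in addition $W$ satisfies Quillen~A, then $\bar W$ is a Bousfield class. The first part is immediate once you note that $W\supseteq W_C$ (because $W_C$ is the minimal strongly saturated class containing cofinal maps), so every diagram is $W$-equivalent to its associated right fibration. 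The second part is where Quillen~A is used, via Lemma~\ref{shape/quillen-a}: a colimit of arrows in $\bar W$ is modelled by a map of Grothendieck integrals whose slices lie in $W$, hence lies in $W$ by Quillen~A, hence its image lies in $\bar W$.

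Your two-stage plan is pointing at exactly this, but the sentence ``compare it via its slices with a canonical Grothendieck-construction presentation whose slices are (colimits of) representables, and invoke Quillen~A'' does not make the mechanism explicit. In particular, you should state and prove the preimage-of-image statement before anything else; this is what lets you transfer strong saturation from $\Cat\slice C$ to $\P(C)$ (your phrase ``modulo lifting the cofinal-map localisation'' is doing real work that needs to be unpacked). Once that lemma is in place, both directions of the cellular-generation claim reduce cleanly to comparing Bousfield classes in $\P(C)$ generated by maps with representable target, and your converse argument goes through as written.
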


Unlike the classical theory, we also have a functoriality statement:

\begin{theorem}[Change of sketch \eqref{essential/localizer}]

  If $f:\sk C\rightarrow \sk D$ is a functor of sketches, then the extension of and postcomposition with $f$ fit into a commuting square
  \[
    \begin{tikzcd}
      \Cat\slice C \ar[r, "f\circ-"] \ar[d] & \Cat\slice D \ar[d] \\
      \Mod(\sk C) \ar[r, "f_!"] & \Mod(\sk D)
    \end{tikzcd}
  \]
  and in particular, $f\circ W_\sk C\subseteq W_\sk D$.
  
  If $f$ is \emph{essential}, then the restriction of cells and comma object functors form a commuting square
  \[
    \begin{tikzcd}
      \Cat\slice D \ar[r, "C\downarrow_D(-)"] \ar[d] & \Cat\slice C \ar[d] \\
      \Mod(\sk D) \ar[r, "f^*"] & \Mod(\sk C)
    \end{tikzcd}
  \]
  and in particular, $C\downarrow_DW_\sk D \subseteq W_\sk C$.

\end{theorem}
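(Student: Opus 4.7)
The plan is to verify both squares by reducing to a formula on representables and then extending by colimits, exploiting that $|-|_\sk C$ is a left adjoint (hence preserves colimits) and that both $f_!$ (always) and $f^*$ (under the essentiality hypothesis) are colimit-preserving.

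For the first square, Theorem \ref{main/localizer} identifies $|U|_\sk C$ with the colimit in $\Mod(\sk C)$ of the composite $\Yoneda_\sk C \circ U$. Since $f_!:\Mod(\sk C)\to\Mod(\sk D)$ is a left adjoint, it preserves colimits, and the universal property of left Kan extension along Yoneda yields the compatibility $f_!\Yoneda_\sk C\simeq \Yoneda_\sk D\circ f$ on representables (to be checked at the level of presheaves by the coYoneda formula, then descended to modules using that $f$ takes $R_\sk C$-cones to $R_\sk D$-cones). Chaining these gives $f_!|U|_\sk C\simeq |f\circ U|_\sk D$, which is the commutativity of the first square. The inclusion $f\circ W_\sk C\subseteq W_\sk D$ is then immediate: if $|u|_\sk C$ is an equivalence, so is $f_!|u|_\sk C\simeq |f\circ u|_\sk D$.

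For the second square the key ingredients are: (i) the identification $f^*\Yoneda_\sk D(d) \simeq |C\downarrow_D d|_\sk C$ for each $d:D$, which is the standard presentation of $\Map_D(f(-),d)$ as the colimit of representables indexed by the slice $C\downarrow_D d$; and (ii) the decomposition $C\downarrow_D V \simeq \operatorname{colim}_{i:I}\, C\downarrow_D V(i)$ in $\Cat\slice C$, obtained by realising the comma construction as a pullback along the target fibration $D^{\Delta^1}\to D$ and invoking preservation of colimits by pullback along right/smooth fibrations. Combining (i) and (ii) with the colimit-preservation of $|-|_\sk C$ expresses $|C\downarrow_D V|_\sk C$ as a colimit in $\Mod(\sk C)$ of the objects $f^*\Yoneda_\sk D V(i)$. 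The essentiality hypothesis---that $f^*:\Mod(\sk D)\to\Mod(\sk C)$ admits a right adjoint---is then used precisely to pull $f^*$ outside this colimit, yielding $f^*|V|_\sk D$. The inclusion $C\downarrow_D W_\sk D\subseteq W_\sk C$ follows from the commuting square by the same two-out-of-three argument as before.

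The main obstacle I anticipate is step (ii): the commutation of the comma construction with colimits in its $D$-argument, upgraded to $\Cat\slice C$. The cleanest route is to recognise $C\downarrow_D(-):\Cat\slice D\to\Cat\slice C$ as a slice/pullback functor along a suitable fibration classified by $f$ and appeal to the smoothness and base-change results for right fibrations already invoked earlier in the paper, together with the fact that colimits in $\Cat\slice C$ are computed in $\Cat$ with the induced structure map. Once this is in place, the essentiality hypothesis supplies the only remaining ingredient, and both parts of the theorem follow.
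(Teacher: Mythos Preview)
Your argument for the first square is correct and matches the paper's approach, which obtains the presheaf-level identity $h_!|U|_C\cong |hU|_D$ by taking left adjoints of the pullback square for $\int_C$ (see \ref{shape/kan-extension}) and then localises.

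The second square has a genuine gap at step (ii). The decomposition $C\downarrow_D V \simeq \colim_{i:I}\,C\downarrow_D V(i)$ is \emph{false} in $\Cat\slice C$. Take $C=D=\point$: then $C\downarrow_D V\cong I$ while each $C\downarrow_D V(i)\cong\point$, so your claim becomes $I\simeq\colim_{i:I}\point\simeq|I|$, which fails whenever $I$ is not a groupoid. Pullback along a fibration does not rescue this, because you are not pulling back a colimit diagram: the diagram $i\mapsto\{V(i)\}$ does not have colimit $I$ in $\Cat\slice D$. What \emph{is} true is that $\target:C\downarrow_D I\rightarrow I$ is a co-Cartesian fibration with fibres $C\downarrow_D V(i)$, and for such fibrations the inclusion of the fibre into the slice $E_i\hookrightarrow E\slice i$ is cofinal; combined with the Quillen-A property of $|-|_C$ (Lemma \ref{shape/quillen-a}) this gives $|C\downarrow_D V|_C\cong\colim_i|C\downarrow_D V(i)|_C$. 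That is the content of Proposition \ref{shape/pullback}\ref{shape/pullback/lax}, and it is the route the paper takes.

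There is a second, smaller issue: your identification (i) $f^*\Yoneda_\sk D(d)\simeq|C\downarrow_D d|_\sk C$ is not ``standard'' at the module level. At the presheaf level one has $f^*h_d\cong|C\downarrow_D d|_C$ unconditionally, but after localising the two sides become $f^*L_\sk D h_d$ and $L_\sk C f^*h_d$, whose agreement is exactly Beck--Chevalley on representables. So you are invoking essentiality twice rather than once. The paper's organisation is cleaner: establish the entire square $|C\downarrow_D U|_C\cong f^*|U|_D$ at the presheaf level via \ref{shape/pullback}, and then apply the Beck--Chevalley condition of Proposition \ref{essential/criteria}\ref{essential/criteria/beck-chevalley} once to descend to modules.
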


\subsubsection*{Classification questions}

Some of the basic questions in the theory of sketches are:
\begin{itemize}

  \item Which localisations of presheaf or diagram categories are sketchable? \hfill(\emph{Sketchability})
  \item Which conical diagrams in $C$ become colimits in $\Mod(C\sep R)$? \hfill($R$-\emph{acyclicity})
  \item Which diagrams in $C$ admit colimits in $\Mod^K(C\sep R)$? \hfill($K$-\emph{constructibility})
  
\end{itemize}
And, unifying and extending the last two questions:

\begin{itemize}

  \item When do two sketches present the same category? \hfill(\emph{constructible Morita equivalence})
  
\end{itemize}

\paragraph{Sketchability}
The first of these questions has a gratifyingly straightforward answer:

\begin{theorem}[Criterion for sketchability]

  A Bousfield localisation of $\P(C)$ is sketchable if and only if it is generated as a Bousfield localisation by morphisms with representable target.
  
\end{theorem}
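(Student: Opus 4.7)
The plan is to establish the two implications separately; both reduce to the Yoneda lemma combined with the standard description of local objects in a Bousfield localisation.

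For the forward direction, I would unpack the definition of $\Mod(\sk C)$ for $\sk C = (C, R)$. Each $\alpha \in R$ is a conical diagram with underlying diagram $U_\alpha : I_\alpha \to C$ and apex $c_\alpha \in C$; its cocone determines a morphism
\[
  |\alpha| : \operatorname{colim}_{I_\alpha} \Yoneda_C \circ U_\alpha \longrightarrow \Yoneda_C(c_\alpha)
\]
in $\P(C)$ whose target is representable. By the standard criterion for local objects, a presheaf $F$ is $|\alpha|$-local precisely when the comparison $F(c_\alpha) \to \lim_{I_\alpha} F \circ U_\alpha$ is an equivalence, which is exactly the condition that $F$ carries $\alpha$ to a limit cone. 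Hence $\Mod(\sk C) \hookrightarrow \P(C)$ is the Bousfield localisation generated by $\{|\alpha| : \alpha \in R\}$, a set of morphisms with representable target.

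For the converse, suppose $L \subseteq \P(C)$ is a Bousfield localisation generated by a set $S$ of morphisms $f : X_f \to \Yoneda_C(c_f)$ with representable target. The key step is to exhibit each such $f$ in the form $|\alpha_f|$ for a conical diagram in $C$. To that end, write $X_f$ as the canonical colimit of representables indexed over its category of elements $E_f \defeq \int X_f$, so that $X_f \cong \operatorname{colim}_{E_f} \Yoneda_C \circ U_f$ where $U_f : E_f \to C$ is the projection. By Yoneda, the space of morphisms $\operatorname{colim}_{E_f} \Yoneda_C \circ U_f \to \Yoneda_C(c_f)$ is identified with $\lim_{E_f} \Map_C(U_f(-), c_f)$, i.e.\ with the space of cocones on $U_f$ with apex $c_f$. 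The morphism $f$ therefore corresponds to a conical diagram $\alpha_f$ in $C$ for which $|\alpha_f| = f$ tautologically, and setting $R \defeq \{\alpha_f : f \in S\}$ yields a sketch whose associated Bousfield localisation is generated by precisely the same set of morphisms as $L$, so $\Mod(C, R) = L$.

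The main subtlety, and the only real obstacle, is size-theoretic: the categories of elements $E_f$ need not be small even when $C$ and $S$ are, so the index shapes of the resulting conical diagrams may lie in a larger universe. This is not a genuine obstruction because Definition~\ref{main/definition} only requires $R$ to be a set of diagrams rather than a set of small-shaped diagrams, but one should check that the identification $f = |\alpha_f|$ is respected when passing between the ambient universes used to name $R$ and to form $\P(C)$. Modulo this bookkeeping, the two generated localisations coincide by definition.
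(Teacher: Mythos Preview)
Your argument is correct and matches the paper's approach. The forward direction is exactly Proposition~\ref{sketch/module/criterion} (criterion~\ref{module/criterion/local}) combined with Proposition~\ref{sketch/module/bousfield}: the coboundary morphisms $d_U:|U|_C\to U_\omega$ of~\eqref{conical-diagram/boundary} are your $|\alpha|$. For the converse, the paper states the result as an immediate corollary without spelling out the construction, but the natural way to produce the sketch is precisely what you describe---replacing the source of each generating morphism by its canonical colimit over $C\slice X_f$ to obtain a conical diagram.

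One small correction on the size issue: your worry is unfounded. If $C$ is $\Universe$-small and $X_f$ is a $\Universe$-presheaf (i.e.\ valued in $\Universe$-spaces), then the category of elements $E_f=C\slice X_f$ is automatically $\Universe$-small, so the resulting conical diagrams have $\Universe$-small index categories and the sketch $(C,R)$ is $\Universe$-small. This is why the paper can assert in Corollary~\ref{sketchability} that a sketchable localisation admits a presentation by a small sketch.
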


One consequence of the restriction that weak equivalences be generated by maps to representables is that the localisation functor can be described `concretely' by a kind of plus construction; see \eqref{sketch/module/localizer/explicit}, \cite{anel2020small}.
Another satisfying consequence is the following:

\begin{corollary}[\ref{sketch/on-point/classification}]

  Let $L:\Space\rightarrow\Space$ be a Bousfield localisation.
  The following are equivalent:
  \begin{itemize}
    \item $L$ is presented by a sketch on the one-object category.
    \item $L$ is locally Cartesian.
    \item $L$ is a nullification functor.
    \item $\mathrm{Ho}(L)$ is presented by an aspherically generated Grothendieck fundamental localiser on $\Cat_1$.
  \end{itemize}
  
\end{corollary}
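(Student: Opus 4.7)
The plan is to route every equivalence through condition (3), nullification.

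For (1) $\Leftrightarrow$ (3), I would unpack the definition of a sketch on the terminal category. A conical diagram in $\point$ is determined by its index $\infty$-category $I$ alone, and by the homotopy hypothesis \pathcite{cis/universe/homotopy-hypothesis} this datum reduces, modulo the weak equivalences of the sketch, to the space $A_I\defeq|I|$. A presheaf on $\point$ is just a space $X$, and the module condition against the relation indexed by $I$ reads that the constant cone is a limit cone, equivalently that $X\to X^{A_I}$ is an equivalence, i.e.~that $X$ is $A_I$-null. Hence $\Mod(\point\sep R)\subseteq\Space$ is the reflective subcategory of $\{A_I\}_{I\in R}$-null spaces, and the localisation is the nullification at this set. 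Conversely, any set of spaces can be realised as groupoid completions of index $\infty$-categories, yielding a sketch on $\point$ presenting the given nullification.

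For (2) $\Leftrightarrow$ (3), I would invoke the classical theorem of Bousfield--Dror Farjoun \cite{farjoun2006cellular}: among Bousfield localisations of $\Space$, those whose local equivalences are stable under base change are exactly the nullifications. The easy direction is a verification that $A$-null objects form an exponential ideal under pullback; the converse is the substantive input, which I would import as a black box.

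For (3) $\Leftrightarrow$ (4), I would use Grothendieck--Cisinski's theorem that the minimal fundamental localiser $W_\infty$ on $\Cat_1$ presents $\mathrm{Ho}(\Space)$. Under this equivalence, fundamental localisers $W\supseteq W_\infty$ correspond to Bousfield localisations of $\mathrm{Ho}(\Space)$, and those generated by declaring a set of categories $\{I_\alpha\}$ aspheric correspond to nullifications by $\{|I_\alpha|\}$. The main obstacle is the external input (2) $\Leftrightarrow$ (3), which must be cited rather than proved in this paper; once admitted, the remaining equivalences amount to unpacking the definitions of \S\ref{sketch/} against the Grothendieck--Cisinski model of $\mathrm{Ho}(\Space)$.
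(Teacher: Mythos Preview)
Your proposal is correct and matches the paper's approach almost exactly. The paper's proof of Proposition~\ref{sketch/on-point/classification} does precisely your $(1)\Leftrightarrow(3)$ argument (a module for a sketch on $\point$ is an $|I|$-null space) and cites \cite[Thm.~6.1.11]{cisinski2006prefaisceaux} for the link to fundamental localisers --- though the paper routes that citation as $(1)\Leftrightarrow(4)$ rather than your $(3)\Leftrightarrow(4)$, which is an immaterial difference. Notably, the version of the statement actually proved in the body (Proposition~\ref{sketch/on-point/classification}) silently drops the ``locally Cartesian'' condition; your explicit acknowledgment that $(2)\Leftrightarrow(3)$ must be imported from Bousfield--Dror Farjoun is, if anything, more scrupulous than the paper itself.
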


Continuing with the theme that our version of colimit sketches are a kind of indexed $\infty$-categorical analogue of (aspherically generated) fundamental localisers, the substantial work of Cisinski on this subject \cite{cisinski2006prefaisceaux} leads us to a conjecture:

\begin{conjecture}[Sketches from localisers] \label{main/conjecture}

  Let $C$ be an $\infty$-category, $W\subseteq (\Cat\slice C)^{\Delta^1}$ a set of maps satisfying the following properties:
  \begin{enumerate}
  
    \item 
      $W$ is stable under 2-out-of-3 and retracts. \hfill{Weak saturation}
      
    \item
      If $I$ has a final object $\omega$, then 
      \[
        [\omega:\point\rightarrow I\stackrel{U}{\rightarrow} C] \in W
      \]
      for all $U:\Fun(I,C)$. \hfill{Contracts cells}
      
    \item 
      If $I\stackrel{h}{\rightarrow} J\rightarrow K\rightarrow C$ is such that
      \[
        [h\slice k:I\slice k\rightarrow J\slice k\rightarrow C]\in W
      \]
      for all $k:K$, then $h\in W$. \hfill{Quillen A}
  \end{enumerate}
  Suppose further that $W$ is \emph{of cellular generation}: it is minimal among sets of maps of diagrams containing (a small subset of) $W\cap(\Cat\coneslice C)$ (where $\Cat\coneslice C$ is the set of conical diagrams in $C$) and satisfying these three properties.
  Then $W$ is the set of weak equivalences for a sketch on $C$.
  
\end{conjecture}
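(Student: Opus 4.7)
The natural strategy is to construct the candidate sketch directly from the promised generating set. Let $R_0 \subseteq W \cap (\Cat\coneslice C)$ be the small subset of conical diagrams that cellularly generates $W$, and set $\sk C \defeq (C\sep R_0)$. The plan is to establish $W = W_\sk C$ by two inclusions.

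The easy direction is $W \subseteq W_\sk C$. By Theorem~\ref{main/localizer}, $W_\sk C$ is strongly saturated (hence weakly saturated, verifying (1)), contracts cells (verifying (2)), and satisfies Quillen A (verifying (3)); moreover it contains $R_0$. The minimality of $W$ among sets of maps satisfying the conjecture's three axioms and containing $R_0$ then yields the inclusion.

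The hard direction is $W_\sk C \subseteq W$. By the minimality clause of Theorem~\ref{main/localizer}, this reduces to showing that $W$ is strongly saturated, contains all cofinal maps (not merely those exhibiting a final object), and satisfies Quillen A. The last is a hypothesis. For cofinal maps, the plan is to leverage Quillen A with $K = J$: a cofinal $h:I\rightarrow J$ over $C$ produces slices $h\slice j:I\slice j\rightarrow J\slice j$, where the target has final object $\id_j$ (so its projection to $C$ lies in $W$ by (2)) and cofinality guarantees that $I\slice j$ is weakly contractible over $J\slice j$. One hopes to then apply Quillen A iteratively to also place the source in $W$, by a transfinite dévissage up a filtration of $J$ by its cells.

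The genuine obstacle is upgrading from weak to strong saturation. The hypotheses only close $W$ under $2$-out-of-$3$ and retracts, whereas $W_\sk C$ is closed under arbitrary colimits in $(\Cat\slice C)^{\Delta^1}$. This is precisely the $\infty$-categorical, $C$-indexed analogue of Cisinski's theorem on aspherically generated fundamental localisers \cite{cisinski2006prefaisceaux}, whose $1$-categorical proof is already substantial. Two plausible approaches suggest themselves: one could construct the colimit functor $|-|:\Cat\slice C\rightarrow \P(C)$ directly from the axioms on $W$, identify its essential image with $\Mod(\sk C)$, and verify that $W$ coincides with the class of maps inverted by this localisation; alternatively, one could try a direct combinatorial argument exploiting the canonical expression of any index $\infty$-category as a colimit of its slices to push weak-saturation closures through colimit towers via Quillen A. Either route seems to require a genuine test-category machinery for $C$-indexed diagrams, which is why the statement remains conjectural here.
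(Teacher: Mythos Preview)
The statement you are addressing is labelled a \emph{conjecture} in the paper, and the paper does not supply a proof. There is therefore no ``paper's own proof'' to compare against. What the paper does offer is the remark following Corollary~\ref{sketch/localizer/properties}, which isolates the same obstruction you arrive at: under only weak saturation one does not know that $W$ contains $W_C$ (equivalently, all $|-|_C$-equivalences, not merely the inclusions of final objects). Your final paragraph, where you flag the upgrade from weak to strong saturation as the genuine obstacle and point to Cisinski's work as the $1$-categorical analogue, is exactly in line with the paper's own assessment.

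Your outline of the two inclusions is the right shape. The inclusion $W\subseteq W_{\sk C}$ via minimality against Theorem~\ref{main/localizer} is correct. For the reverse inclusion, note that your sketch for cofinal maps does not close: knowing that $I\slice j$ is weakly contractible is not the same as knowing it has a final object, so axiom~(2) does not apply to $[I\slice j\to C]$, and the ``transfinite d\'evissage'' you gesture at would already require something like strong saturation (closure under colimits of arrows) to make the slice-by-slice argument go through. This is not a flaw in your write-up so much as a confirmation that the difficulty really is where the paper says it is: one needs an indexed analogue of Cisinski's test-category machinery to pass from weak saturation plus Quillen~A to containment of $W_C$, and that machinery is not developed here.
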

This statement is much stronger than the converse part of Theorem \ref{main/localizer} because we hypothesise only that $W$ is \emph{weakly} saturated and contains the inclusions of final objects of diagrams (rather than all cofinal functors).

\paragraph{Acyclicity} Our main contribution to the study of \emph{acyclic cones} for a sketch is a set of criteria for a set of relations to be \emph{saturated}, that is, maximal among sets of relations presenting a given category of modules (Proposition \ref{saturated/criteria}).
We relate this property to the saturation properties satisfied by the corresponding sets of weak equivalences on $\P(C)$ (a Bousfield class) and $\Cat\slice C$ (an indexed fundamental localiser) and comment on its application to \emph{aspherical} diagrams and nullification of spaces.

\paragraph{Constructibility}
The main focus of our discussion of constructibility is criteria for a diagram of constructible modules to have a colimit.
We discuss applications of the method of constructing $\sk C$-\emph{rectifications} of diagrams in $\Mod^K(\sk C)$.

A typical application of this method is where we have a way to specify $K$ naturally for a family of categories that includes both $\sk C$ and $\Mod^K(\sk C)$ --- for example, if $K$ is the set of all diagrams whose index category belongs to a fixed set of categories $K(\point)\subseteq\Cat$.
The question of whether 
\[
  (\sk C\sep K_{\sk C})\rightarrow  
  \left( \Mod(\sk C\sep K_\sk C)\sep K_{\Mod(\sk C\sep K_\sk C)} \right)
\]
is a constructible Morita equivalence is then equivalent to the question of whether colimits of shape $K$ are \emph{stable for composition}.
In some cases, one can apply this as a general criterion (Proposition \ref{construction/universal/criterion}) to conclude that $K$-cocompletion is \emph{idempotent}.
We discuss several important cases --- finite discrete, $\kappa$-small, idempotents, filtered, sifted, and $n$-connected diagrams, and descent for higher Deligne-Mumford stacks --- in \S\ref{construction/universal/}.

\subsection{Previous work} \label{previous-work/}

\paragraph{Sketches in classical category theory}

The idea of a general theory of `adjoining colimits with certain colimits enforced' (and perhaps also some limits) has some pedigree in classical category theory.
Rather than attempt to summarise a history with which I am not familiar, I direct the interested reader to the textbook or survey accounts \cite{makkai1989accessible, adamek1994locally, barr1999notes}.
The main thrusts of the classical investigation seem to have been concerned with accessibility of categories of models, adjoint functor theorems, and applications to computer science.

Quite apart from the homotopical context in which the present paper is couched, my work has some points of departure from the thread of the classical literature:
\begin{itemize}
  
  \item 
    I chose to focus on presheaves and colimits rather than functors and limits. This is a purely cosmetic difference.
  
  \item
    The classical literature discusses a question of \emph{sketchability of categories}, the typical answer being Lair's theorem that sketchable $\equiv$ accessible.
    In this paper, we address instead the more focussed question of \emph{sketchability of localisations}; alternatively, sketchability of categories with respect to a chosen dense functor.
    On the other hand, we hardly discuss accessibility.
    
  \item
    Access to $\infty$-categorical methods enables us to make statements about the category of sketches without developing an auxiliary theory (say of $(2,1)$-categories).
    
  \item
    Our notion of \emph{constructibility} (\S\ref{construction/}) seems not to have an analogue in the classical literature.
    
\end{itemize}

\paragraph{Lawvere theories}
One of the major classes of examples --- the finite coproduct sketches --- dates back even further than general \emph{esquisses} to the thesis of Lawvere \cite{lawvere1963functorial}.
Lawvere's ideas have pervaded homotopy theory, finding application in defining homotopy algebraic structures \cite{badzioch2002algebraic, cranch2009algebraic}, motivic homotopy theory \cite{voevodsky2010simplicial}, and derived geometry \cite{DAGV}.

\paragraph{``Pursuing stacks''}
Our notion of sketches can be seen as a massive generalisation of an $\infty$-categorical analogue of Grothendieck's notion of `fundamental localiser' \cite{maltsiniotis2005theorie}.
Passing from $\Cat_1$ to $\Cat_\infty$ sidesteps many of the difficulties involved in Grothendieck's programme --- for example, groupoid completion $\Cat_\infty\rightarrow\Space$ preserves colimits, obviating the need for a theory of homotopy colimits.
However, the computation of limits and mapping spaces is still a difficult problem, and there is thus still a need in higher category theory for a language of `diagrammatic' homotopical algebra.
This paper can be viewed as a contribution to such a language.

\paragraph{Adjoining colimits to $\infty$-categories}

To the best of my knowledge, this is the first work to explicitly formulate the notion of sketch as a first-class object in higher category theory.\footnote{Despite the suggestive title of \cite{tuyeras2017sketches}, which I came across while researching whether or not it is safe to make this claim, the latter is actually a study of presentations of higher category theory using 1-categorical sketches.}
However, there is an obvious comparison to make with the results of \cite[\S5.3.6]{HTT}, which also formulates a general result for adjoining colimits of a certain shape, preserving certain relations.

The main technical difference between this work and that of Lurie is in our treatment of constructibility: where we ask for closure under colimits of certain diagrams, Lurie asks for closure under \emph{iterated} colimits over all diagrams having certain index categories. 
In particular, Lurie's $K$-cocompleteness is defined independently of the starting category of `cells'.
The tradeoff is that while we have more control over presentations of objects in our constructible cocompletions, we lose built-in guarantees on existence of colimits in the category of constructible models; instead, this must be treated as a separate question.

Every Lurie-style cocompletion can be presented by a sketch, but not vice versa.
An important example of a set of constructions that cannot be expressed only in terms of their index categories are those that present the theory of geometric stacks \S\ref{construction/universal/stacks}.

%%%%%%%%%%%%%%%%%%%%%%%%%%%%%%%%%%%%%%%%%%%%%%%%%%%%%%%%%%%%%%%%%%%%%%%%%%%%%%

%%%%%%%%%%%%%%%%%%%%%%%%%%%%%%%%%%%%%%%%%%%%%%%%%%%%%%%%%%%%%%%%%%%%%%%%%%%%%%

\section{Diagrams} \label{diagrams/}

Sketches are about two things: density, and the homotopy theory of diagrams.
In this section we cover background material on these two things.

We use the $\infty$-category theory of \cite{HTT} and \cite{cisinski2019higher}, with the following caveats:
\begin{itemize}

  \item 
    $\infty$-categories are just called \emph{categories}, except where we want to make a specific emphasis. Classical, set-based categories are called \emph{1-truncated} categories. ``There is a unique $XYZ$'' means ``the space (or simplicial set) of $XYZ$ is contractible.''
    
  \item
    The largest $\infty$-groupoid contained in an $\infty$-category $C$ is denoted $\Object(C)$.
    (It is modelled by the largest Kan complex contained in a quasicategory model for $C$.)
    
  \item
    We define a \emph{set of objects in a category $C$} to be a subset of $\pi_0(\Object(C))$.
    If $S$ is a set or space and $S\rightarrow C$ a map, we abuse language and say also that $S$ is a `set of objects of $C$', conflating it with its image in $\pi_0(\Object(C))$.

\end{itemize}

\subsection{Universe}

The theme of this paper being cocompletions and adjoint functor theorems, we must be explicit about our approach to cardinality issues.
Our approach is based on the axiom of universes.
We do not fix a universe throughout, but instead introduce some syntactical innovations to avoid having to mention it all the time (\emph{typical ambiguity}, \ref{universe/ambiguity}).

\begin{para}[Universe]
\label{size}

  We assume that every set is contained in some Grothendieck universe (or what is the same thing \pathcite{universe/axiom/inaccessible-cardinal}, that every set has rank bounded above by an inaccessible cardinal). For simplicity, we exclude the smallest Grothendieck universe $\omega$ whose elements are finite sets.
  
  Let $\Universe$ be an (uncountable) Grothendieck universe. We make the following definitions:
  \begin{itemize}
    
    \item 
      A $\Universe$-\emph{set} or $\Universe$-\emph{small} set is an element of $\Universe$;
    
    \item 
      A $\Universe$-\emph{space} is a space which has the homotopy type of a CW complex with a $\Universe$-set of cells; equivalently, all of whose homotopy groups are $\Universe$-sets. (The equivalence of these criteria fails for the excluded case $\Universe=\omega$).
    
    \item 
      A $\Universe$-\emph{category} is a category whose space of objects and spaces of morphisms are all $\Universe$-spaces.
      
    \item
      A locally $\Universe$-small category is a category, each of whose mapping spaces is $\Universe$-small. 
    
    \item
      A $\Universe$-\emph{presheaf} on a category $C$ is as defined in \pathcite{cis/presheaf/small/definition}, i.e.~as a presheaf which is equivalent to a $\Universe$-colimit of representables.
      If $C$ is $\Universe$-small, this is the same as being valued in $\Universe$-spaces.
      
      The Yoneda embedding $\Yoneda_C:C\rightarrow\Universe\P(C)$ and covariance of $\Universe\P$ by left Kan extensions of functors are always defined, even if $C$ is not $\Universe$-small.
      However, in this case $\P(C)$ may fail to be presentable, and pullback is not defined.
      
  \end{itemize}
  In the case of $\infty$-categories, these homotopy-invariant conditions are called `essentially $\Universe$-small' in \pathcite{htt/universe}.
  In this paper, I will also write $C \ll \Universe$ (compare \cite[Def.~5.4.2.8]{HTT}).
  By the axiom of universes, every set, space, category, or presheaf belongs to some universe.
  
  In order for sets, spaces, or categories to themselves form a category, it is necessary to choose a universe; hence, for each $\Universe$ we have the categories $\Universe\Set$, $\Universe\Space$, $\Universe\Cat$ (see \S\ref{model/}).

\end{para}

\begin{remark}[The case $\Universe=\omega$]

  The various definitions one can make diverge in the minimal case $\Universe=\omega$, and there isn't really a satisfactory notion of `finite space' that is stable under basic constructions (i.e.~pushout and fibre product) \cite{anel2021elementary}.
  
\end{remark}

\begin{para}[Cocomplete]

  A category is said to be \emph{$\Universe$-cocomplete} if it admits all colimits indexed by $\Universe$-categories.
  It is said to be \emph{cocomplete} if it is $\Universe$-cocomplete for some $\Universe$.
  
  Let $E$ be a cocomplete category. Say that $C$ is \emph{$E$-small} if it is $\Universe$-small for some $\Universe$ which respect to which $E$ is cocomplete; write also $C\ll E$.
  That is, we let any cocomplete category act as a surrogate for the universe with respect to which it is cocomplete. 
  In particular, for any universe $\Universe$, $\Universe$-small is the same as $\Universe\Set$-small.
  
\end{para}

\begin{para}[Presentability]
\label{presentable} 

  An $\infty$-category is $\Universe$-\emph{presentable} if it is equivalent to a Bousfield localization at a $\Universe$-set of morphisms of the category of $\Universe$-presheaves on a $\Universe$-category \pathcite{cis/presentable/definition}. 
  By \pathcite{htt/bousfield-localization/existence}, such a localisation is always $\Universe$-\emph{accessible}; hence the $\infty$-categories we are calling `presentable' also satisfy Simpson's criteria \pathcite{htt/presentable/criterion}.
  In particular, a $\Universe$-presentable category is locally $\Universe$-small (but usually not $\Universe$-small).
  
\end{para}

\begin{para}[Typical ambiguity]
\label{universe/ambiguity}
  
  The universe $\Universe$ may be suppressed from the notation and inferred from the context according to precise rules which I will explain as we go along. This is called \emph{typical ambiguity} after Russell's type hierarchy.
  
  A term which must be prefixed by a specific universe to have meaning is said to be \emph{typically ambiguous}.
  More generally, a term may require several universes.
  Each space where a universe is omitted is called an \emph{implicit universe}.
  Implicit universes can be \emph{resolved} by prepending choices of universe to all such terms.
  Statements involving multiple implicit universes will usually have restrictions (in the form of inequalities or equalities) on the universes that may be used to resolve the ambiguity.
  For all such statements in this paper, I will explicitly declare the rules for resolving ambiguity.
  Each rule is an inequality (strict or non-strict) or an equality between ambiguities in an expression.
  The ambiguity can then be resolved globally by the following algorithm:
  \begin{enumerate}
    \item
      Let $\sigma$ be the set of implicit universes in the document.
      
    \item
      The rules for resolving ambiguity induce the structure of a directed graph on $\sigma$ whose edges are marked with either $<$ or $\leq$.
      A \emph{resolution} of the ambiguity is a map $r$ from the set of vertices of this graph to the set $\{0,1,\ldots,n\}$ for some $n:\N$ such that $r(x)\leq r(y)$ whenever there is a $\leq$-edge connecting $x$ to $y$ and $r(x)<r(y)$ whenever there is a $<$-edge connecting $x$ to $y$.
      This is possible provided there is no directed cycle in the graph containing a $<$-edge.
    
    \item
      Use the resolution $r$ to substitute each implicit universe with an explicit one from a list $\Universe_0\ll\cdots\ll\Universe_n$.
  
  \end{enumerate}
  Systematically using typically ambiguous statements has the benefit of economy of notation and may improve readability, but it carries the risk of the careless user combining ambiguous statements in a way that yields a statement whose ambiguity cannot be resolved.
  For example, he might incorrectly conclude from the fact that $\Sketch\rightarrow\PrL\Cat$ is a colimit-preserving localisation (true, but not proved in this paper) that it admits a right adjoint (false).
  
\end{para}

\begin{para}[Basic rules for resolving ambiguity]  \label{universe/rules}

  The following general principles apply to my use of ambiguous expressions in this document:
  \begin{itemize}
  
    \item 
      The symbols $\Set$, $\Space$, $\Cat$ are all typically ambiguous, as are the words `small', `locally small', `cocomplete', and `presentable', and the presheaves operator $\P$.
      
      The ambiguous symbol $\P(C)$ should be computed with respect to a universe $\Universe$ containing $C$, so that it is $\Universe$-presentable and universal with respect to functors into $\Universe$-cocomplete categories.
      
    \item
      A general principle for resolving implicit universe is that a formula
      \[
        f: A \rightarrow B
      \]
      where both $A$ and $B$ are typically ambiguous (each depending, for simplicity of exposition, on a single universe) should be disambiguated as
      \[
        f:\Universe_A A \rightarrow \Universe_B B
      \]
      where $\Universe_A\leq \Universe_B$.
      If this is the case, we say that the ambiguity can be resolved \emph{from left to right}.
      
      For example, the inclusions $\Set\subset \Space$ and $\Space\subset\Cat$ are resolvable left to right.
  
    \item
      When there are maps in both directions, such as in an adjunction, then usually the disambiguating universe must be the same on both sides.
      In this case we say the expression is resolved against a \emph{fixed} universe.
      
      In particular, expressions in which multiple categories are asserted or supposed to be \emph{presentable} should always be resolved in this way.
      
    \item
      Statements that assert the equivalence of a list of criteria should be formulated to be resolvable against a fixed universe.
  
  \end{itemize}
  
\end{para}

%%%%%%%%%%%%%%%%%%%%%%%%%%%%%%%%%%%%%%%%%%%%%%%%%%%%%%%%%%%%%%%%%%%%%%%%%%%%%%

\subsection{Models} \label{model/}

We wish to mainly work internally to the $\infty$-category $\Cat$ of $\infty$-categories i.e.~within the $\infty$-category modelled by the simplicially enriched category of quasi-categories, and \emph{not} directly in the Joyal model category of simplicial sets.
In particular, outside the current section we avoid explicit reference to any simplicial set models.

To make this possible, we need to internalise a couple of basic concepts --- slices and cones --- that are defined using explicit simplicial sets in the literature.

\begin{para}[Models for $\Cat_\infty$ and its basic constructions] \label{model/cat}
Some remarks are in order to compare the commonly-used models for basic concepts in higher category theory with the internal versions we will use.

\begin{itemize}  
  \item    
    The $\infty$-category of $\infty$-categories is denoted $\Cat$. 
    Lurie defines it to be the homotopy-coherent nerve of the simplicial category of quasicategories.
    Using results of Hinich \cite{hinich2015dwyerkan}, it can be identified with the $\infty$-categorical localisation of the category $s\Set$ of simplicial sets at the Joyal categorical equivalences. 
  
  \item
    The $\infty$-category of $\infty$-groupoids is denoted $\Space$.
    Lurie defines it to be the simplicial nerve of the simplicial category of Kan complexes.
    Using results of Hinich \cite{hinich2015dwyerkan}, it can be identified with the $\infty$-categorical localisation of the Kan-Quillen model structure.

    Specialising \pathcite{cis/universe/homotopy-hypothesis} to the case $X=\point$ identifes this $\infty$-category with the quasicategory of right fibrations considered in \emph{op.~cit}.~\cite[Def.~5.2.3]{HCHA}.
    
  \item
    The quasi-category $\Delta^1$ is defined to be the simplicial set represented by $[1]$.
    If $C:\Cat$, we write $C(x,y)$ for the fibre (in $\Cat$) of $C^{\Delta^1}\rightarrow C^2$ at $(x,y)$.
    It happens that this fibre can be computed in $s\Set$; this model is denoted $\mathrm{Hom}_C(x,y)$ in \cite[28]{HTT}.
    By \cite[Cor.~4.2.1.8]{HTT} it is equivalent to $\mathrm{Hom}^{L/R}_C(x,y)$, whence by the comparison theorem \cite[\S2.2.4]{HTT} it models the homotopy type $\Map_C(x,y)$.
    
    In the special case $C=\Fun(I,J)$, we write $\Nat_{I\rightarrow J}(x,y)$ for this space.
    
  \item
    By \pathcite{cis/homotopy-colimit/is-colimit}, Joyal homotopy limits and colimits of quasicategories compute $\infty$-categorical limits and colimits in $\Cat$ (or its slices).
    In particular, by \cite[Cor.~3.3.1.4]{HTT}, strict pullbacks of (co-)Cartesian inner fibrations induce pullbacks in $\Cat$.
    
  \item
    If $A,B$ are quasi-categories, then $A\times B$ is a product of Joyal fibrant objects and hence a product in $\Cat$.
    If $C$ is another quasi-category, $C^B$ is a quasi-category \pathcite{cis/model/functor-category/is-category} invariant under categorical equivalence in $B$ and $C$ \cite[Thm.~3.6.8,9]{HCHA}.
    Moreover, by definition (of $\Cat$), $\Object(C^B)=\Cat(B,C)$ is a mapping space in $\Cat$.
    It follows that the simplicial set exponential of quasi-categories induces an $\infty$-categorical exponential in $\Cat$.

\end{itemize}

\end{para}

\begin{definition}[Slice]
\label{slice/definition}

  Let $C$ be a category, $x:C$ an object. We define the \emph{slice} category $C\slice x$ by the exactness of the pullback square 
  \[
    \begin{tikzcd}
      C\slice x \ar[d] \ar[r] \ar[dr, phantom, "\lrcorner" near start] & 
      C^{\Delta^1} \ar[d, "\target"] \\
      \point \ar[r, "x"] & C
    \end{tikzcd}
  \]
  in the $\infty$-category of $\infty$-categories. 
    
\end{definition}

\begin{para}[Joyal-Lurie overcategories]
\label{slice/joyal-lurie}

  Both \cite{HTT} and \cite{cisinski2019higher} use an explicit quasi-category model of the slice in place of the internal definition above. 
  The following arguments establish that the Joyal construction does indeed model definition \ref{slice/definition}: 
  \begin{itemize}
  
    \item
      The `alternative slice' model $C^{/x}$ is defined in \cite[\S4.2.1]{HTT} to be the value at $(C,x)$ of a right adjoint to the construction 
      \[
        s\Set \rightarrow s\Set_* \quad D \mapsto (D\times\Delta^1)\sqcup_D\Delta^0
      \]
      the pushout being computed in the category $s\Set$ of simplicial sets.
      This is equivalent to the `standard slice' model $C_{/x}$ \cite[Prop.~4.2.1.5]{HTT}.
      
    \item
      By unwinding definitions, $C^{/x}\cong \{x\}\times_CC^{\Delta^1}$ as simplicial sets.
      Since $\target$ is a co-Cartesian fibration \pathcite{htt/comma/source-map-is-cartesian}, this induces a pullback in $\Cat$ \eqref{model/cat}.
        
  \end{itemize}

\end{para}

\begin{comment}
\begin{para}[Models for slices of $\Cat$]

  By \pathcite{cis/localization/of-slice}, the Joyal slice $\Cat_{/c}$ is an $\infty$-categorical localisation of the Joyal model structure on the slice $q\Cat\slice C$.
  %
  By the preceding remarks, this too is the same as the $\infty$-categorical slice $\Cat\slice C$.
    
\end{para}
\end{comment}

\begin{para}[Properties of slices]
\label{slice/properties}

  Let us pause to record a couple of properties of this construction:
  \begin{itemize}
  
    \item
      By \eqref{slice/joyal-lurie} and \pathcite{htt/cone/transport} (with $A=\emptyset$ and $B=\point$), $C\slice x$ is equivalent to a right fibration.
    
    \item
      By functoriality of pullback, the construction $(C,x)\mapsto [C\slice x\rightarrow C]$ is a functor from the category of categories equipped with an object (i.e.~$\point\slice\Cat$) into the full subcategory $\mathbf{RFib}$ of $\Cat^{\Delta^1}$ spanned by right fibrations.
      Note that this does not yet give functoriality for morphisms in $C$.
    
    \item
      If $f:x\rightarrow y$ is a map, then the top arrow in the square 
      \[
        \begin{tikzcd}
          (C\slice y)\slice f \ar[r] \ar[d] &  C\slice x \ar[d] \\
          C\slice y \ar[r] & C
        \end{tikzcd}
      \]
      induced by functoriality of slicing is an equivalence.
      Schematically, the map is 
      \[
        \{\begin{tikzcd}\cdot \ar[r] \ar[rr, bend left] & x \ar[r, "f"'] &  y\end{tikzcd}\} 
        \mapsto \{\cdot\rightarrow x\}
      \]
      and its inverse equivalence is given by postcomposition with $f$.
      
      The resulting map $f\circ-:C\slice x\rightarrow C\slice y$ is the unique functor over $C$ that sends $\id_x$ to $f$.
      
    \item
      Suppose $C$ admits pullbacks along $f$. Then $f\circ-$ has a right adjoint $(-\times_yx):C\slice y\rightarrow C\slice x$ whose existence can be seen from the universal mapping property of the pullback.
      
    \item
      The source projection $\source:C\slice x\rightarrow C$ preserves fibre products \pathcite{cis/fibre-product/in-slice} (and in fact, any weakly contractible limit).
        
  \end{itemize}
  
\end{para}

\begin{proposition}[Facts about right fibrations]
\label{fibration/facts}

  Right fibrations enjoy the following properties:
  \begin{enumerate}
    
    \item \label{fibration/facts/slice-criterion}
      A functor $p:E\rightarrow C$ is equivalent to a right fibration if and only if for each $e:E$, the induced map $E\slice e\rightarrow C\slice pe$ is an equivalence of categories.

    \item \label{fibration/facts/left-division}
      Let $h:E\rightarrow F$ be a functor and $p:F\rightarrow C$ (equivalent to) a right fibration such that $ph$ is (equivalent to) a right fibration. Then $h$ is equivalent to a right fibration.

    \item \label{fibration/facts/fibre-product}
      A fibre product in $\Cat\slice C$ of (functors equivalent to) right fibrations is (equivalent to) a right fibration.
      
    \item \label{fibration/facts/slice-category}
      Let $E\rightarrow C$ be (equivalent to) a right fibration. Then $\RFib(E)\cong \RFib(C)\slice E$ via the composition map, where $\RFib(C)\defeq \mathbf{RFib}\times_\Cat\{C\}$ is full subcategory of $\Cat\slice C$ spanned by right fibrations.
      
  \end{enumerate}

\end{proposition}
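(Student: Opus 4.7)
The plan is to prove \eqref{fibration/facts/slice-criterion} by appealing to \pathcite{cis/fibration/right/slice-criterion} after reconciling the internal slice $C\slice x$ of \eqref{slice/definition} with the Joyal over-category used there---this reconciliation is already essentially carried out in \eqref{slice/joyal-lurie}---and then to deduce the remaining parts formally using the functoriality of slicing recorded in \eqref{slice/properties}. The only additional check required for \eqref{fibration/facts/slice-criterion} is that the induced map $E\slice e \rightarrow C\slice pe$ computed in $\Cat$ agrees with the slice map on Joyal overcategories used in Cisinski's statement, which is immediate from the functoriality of pullback in the defining square.

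For \eqref{fibration/facts/left-division}, I would factor $E\slice e \rightarrow C\slice phe$ through $F\slice he$. The second leg is an equivalence by \eqref{fibration/facts/slice-criterion} applied to $p$, and the composite is an equivalence by \eqref{fibration/facts/slice-criterion} applied to $ph$; two-out-of-three makes the first leg an equivalence, so \eqref{fibration/facts/slice-criterion} applied in reverse shows that $h$ is equivalent to a right fibration.

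For \eqref{fibration/facts/fibre-product}, take a fibre product $E\times_GF$ in $\Cat\slice C$ of right fibrations $E,F,G\rightarrow C$ with structure maps $E\rightarrow G\leftarrow F$. By \eqref{fibration/facts/left-division} applied to $E\rightarrow G\rightarrow C$, the map $E\rightarrow G$ is a right fibration; since right fibrations are stable under pullback in $\Cat$, so is $E\times_GF\rightarrow F$, and composing with $F\rightarrow C$ (another right fibration by hypothesis) gives the claim. For \eqref{fibration/facts/slice-category}, I would exhibit mutually inverse functors between $\RFib(E)$ and $\RFib(C)\slice E$: composition with $E\rightarrow C$ in the forward direction (a composite of right fibrations is a right fibration), and forgetting the factorisation $G\rightarrow E\rightarrow C$ to the underlying map $G\rightarrow E$ in the reverse direction (which is a right fibration by \eqref{fibration/facts/left-division}). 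Both composites are naturally the identity.

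The main obstacle, such as it is, consists in packaging \eqref{fibration/facts/slice-criterion}: translating Cisinski's statement, phrased in terms of simplicial-set overcategory models, into a statement about the internal slice of \eqref{slice/definition}. Once this is done, parts \eqref{fibration/facts/left-division}--\eqref{fibration/facts/slice-category} follow by the short chain of formal arguments above, each of which uses \eqref{fibration/facts/slice-criterion} as its sole input together with elementary diagram chasing.
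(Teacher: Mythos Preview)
Your argument is correct and, for parts \eqref{fibration/facts/slice-criterion}, \eqref{fibration/facts/left-division}, and \eqref{fibration/facts/slice-category}, follows the paper's approach exactly---the paper is simply terser (writing ``By \ref{fibration/facts/slice-criterion}'' for \eqref{fibration/facts/left-division} and one line for \eqref{fibration/facts/slice-category}) while you spell out the two-out-of-three and the mutual inverses.

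For part \eqref{fibration/facts/fibre-product} your route diverges slightly. The paper replaces one leg of the cospan by an inner fibration, observes via \eqref{fibration/facts/left-division} that it is then a strict right fibration, and concludes by noting that the strict fibre product computes a homotopy fibre product for the \emph{contravariant} model structure on $s\Set\slice C$, whose fibrant objects are exactly the right fibrations over $C$. You instead appeal directly to stability of right fibrations under $\infty$-categorical pullback and under composition. Both facts hold---composition stability is immediate from \eqref{fibration/facts/slice-criterion}, and pullback stability follows from the strict case together with the observation in \eqref{model/cat} that strict pullbacks along (co-)Cartesian inner fibrations compute pullbacks in $\Cat$---but you invoke them without justification, so a sentence citing these would close the gap. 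Your version is marginally more elementary in that it never names the contravariant model structure; the paper's version makes the model-categorical input explicit and stays within $\Cat\slice C$ throughout.
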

\begin{proof}

  \begin{enumerate}
    
    \item
      By \pathcite{cis/fibration/right/slice-criterion} and invariance of the statement under categorical equivalence.
      
    \item
      By \ref{fibration/facts/slice-criterion}.
      
    \item
      Let $E:\Lambda^2_2$ be a cospan in $\Cat\slice C$ whose objects are right fibrations. 
      Up to equivalence, we may assume that one (indeed, both) of the maps is an inner fibration, and therefore a right fibration by \eqref{fibration/facts/left-division}.
      Therefore, its fibre product is a homotopy fibre product for the contravariant model structure, and its apex is a right fibration over $C$.  
      
    \item
      By \ref{fibration/facts/left-division}, morphisms in $\RFib(C)$ are themselves right fibrations.
    \qedhere
  
  \end{enumerate}

\end{proof}

\begin{remark}

  In fact, any diagram in $\Cat\slice C$ whose terms are (equivalent to) right fibrations has limit (equivalent to) a right fibration. 
  For example, this can be deduced from the fact \cite[Thm.~3.1.5.1]{HTT} that the identity functor from the contravariant model structure on $s\Set\slice C$ to the slice of the Joyal model structure is right Quillen.
  
  In the interests of avoiding delving too far into model category theoretic considerations, for the present application Proposition \ref{fibration/facts} will suffice.
  
\end{remark}

\begin{para}[Cones] \label{cone}

  Internalising the notion of slice gives us additional flexibility in the study of \emph{cones}.
  A cone is a natural transformation from a functor into a constant functor, i.e.~an object of 
  \[
    \Cone_C \defeq \Fun(I,C)^{\Delta^1}\times_{\Fun(I,C)}C \cong \Fun(I^\triangleright,C).
  \]
  The latter equivalence, considered as a natural isomorphism of functors of $C$, can be taken as the definition of $I^\triangleright$.
  If $U:I\rightarrow C$, write
  \[
    \Cone(U, x) \defeq \Nat_{I\rightarrow C}(U,\underline{x})
  \]
  for the space of cones over $U$ with vertex $x$ (the underline denoting a constant functor).
  
  It will be useful to have another expression for this space of cones.
  Allowing $U$ to vary, we compute
  \begin{align*}
    \Fun(I,C)\slice\underline{x} &= \Fun(I,C)^{\Delta^1}\times_{\Fun(I,C)}\Fun(I,\{x\}) \\
    &= \Fun(I,C^{\Delta^1}\times_C\{x\}) \\
    &= \Fun(I,C\slice x)
  \end{align*}
  from the definition of slice \eqref{slice/definition}.
  Understanding $\Nat_{I\rightarrow C}(U,\underline{x})$ as the fibre over $U$ of the projection $\Fun(I,C)\slice \underline{x}\rightarrow \Fun(I,C)$, we find 
  \[
    \Cone(U, x) \cong \Fun_C((I\sep U)\sep C\slice x).
  \]
  
\end{para}

\begin{para}[Colimit cones] \label{colimit-cone}
  
  A cone $\xi:\Nat_{I\rightarrow C}(F,\underline{x})$ and object $y:C$ induces a commuting square of spaces
  \[
    \begin{tikzcd}
      C(x,y) \ar[r] \ar[d, equals] & \Cone(F, y) \ar[d, equals] \\
      \Fun_C(C\slice x,C\slice y) \ar[r] & \Fun_C(I, C\slice y).
    \end{tikzcd}
  \]
  It is a \emph{colimit} cone if the horizontal maps are invertible for all $y:C$.
  
  A cone over $U$ induces an element of the slice $U\slice C\defeq   (U\slice C^I)\times_{C^I}C$, hence in particular, a vertex of the Joyal-Lurie overcategory model $C_{U/}$.
  This element is a colimit cone in the sense of \cite{HTT, HCHA}.

\end{para}

%%%%%%%%%%%%%%%%%%%%%%%%%%%%%%%%%%%%%%%%%%%%%%%%%%%%%%%%%%%%%%%%%%%%%%%%%%%%%%

\subsection{Presheaves and right fibrations} \label{fibration/}

We review here the basic facts we use about the Grothendieck integral/classifying functor constructions and their functoriality.

\begin{para}[Grothendieck integral]
\label{fibration/integral}

  Let $\Cat\slice -:\Cat^\op\rightarrow\Cat$ be the contravariant functor associated by Lurie's `straightening' equivalence to the Cartesian fibration $\target:\Cat^{\Delta^1}\rightarrow\Cat$.
  The \emph{universal Grothendieck construction} for presheaves is a natural transformation
  \[
    \int_-: \P(-) \rightarrow \Cat\slice -.
  \]
  Evaluating at a category $C:\Cat$, it is fully faithful \pathcite{cis/functions-fibrations/integral/is-fully-faithful} with essential image $\RFib(C)$ spanned by the category of Joyal right fibrations \pathcite{cis/functions-fibrations/integral/essentially-surjective}.
  
  For fixed $F:\P(C)$, $\int_CF$ can be constructed as a pullback along $F$ of the universal right fibration $(*\slice\Space)^\op \rightarrow \Space^\op$ \pathcite{cis/fibration/universal/definition}.
  We say then that this square exhibits $F$ as a \emph{classifying presheaf} for $\int_CF$, or that $F$ \emph{classifies} $\int_CF$.
  By \cite[par.~5.8.1]{cisinski2019higher}, the right fibration  $C\slice x$ classifies the representable presheaf $h_x=C(-,x)$.

\end{para}

\begin{proposition}
\label{slice/fibre-product}

  The slice category construction, considered as a functor $C\slice-:C\rightarrow \Cat$, preserves fibre products.

\end{proposition}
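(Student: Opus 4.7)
The plan is to factor the slice functor as $C \xrightarrow{\Yoneda_C} \P(C) \xrightarrow{\int_C} \Cat\slice C \xrightarrow{\forget} \Cat$ and observe that each stage preserves fibre products.

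First, I would note that the composition $\int_C\circ\,\Yoneda_C$ agrees with $x\mapsto[C\slice x\rightarrow C]$ as a functor $C\rightarrow\Cat\slice C$: by the last sentence of \eqref{fibration/integral}, $\int_Ch_x$ is (represented by) the right fibration $C\slice x\rightarrow C$, and both constructions extend to functors of $x$ via post-composition (cf.~\eqref{slice/properties}), agreeing on the universal example where $x$ is the generic object. Equivalently, both factorisations through the inclusion $\RFib(C)\hookrightarrow\Cat\slice C$ give the same functor because the Grothendieck integral is natural in both variables.

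Second, the Yoneda embedding preserves all limits by \pathcite{htt/yoneda/preserves-limits}; in particular it sends $x\times_zy$ to $h_x\times_{h_z}h_y$.

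Third, the Grothendieck integral $\int_C:\P(C)\rightarrow\Cat\slice C$ is fully faithful with essential image the full subcategory $\RFib(C)$ \eqref{fibration/integral}. By Proposition \ref{fibration/facts}\eqref{fibration/facts/fibre-product}, $\RFib(C)$ is closed under fibre products in $\Cat\slice C$. A fully faithful functor whose image is closed under a given class of limits preserves those limits, so $\int_C$ preserves fibre products.

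Finally, the forgetful $\forget:\Cat\slice C\rightarrow\Cat$ preserves fibre products — indeed, all weakly contractible limits — so composing gives that $C\slice-:C\rightarrow\Cat$ preserves fibre products. There is no real obstacle; the only thing to double-check is the identification of $C\slice-$ as $\int_C\circ\,\Yoneda_C$ as a functor of $x$, which is routine given the functoriality discussion in \eqref{slice/properties}.
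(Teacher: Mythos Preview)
Your proposal is correct and follows essentially the same route as the paper: both factor $C\slice-$ as $\forget\circ\int_C\circ\Yoneda_C$ and check each factor preserves fibre products, citing \pathcite{htt/yoneda/preserves-limits} for $\Yoneda_C$, Proposition~\ref{fibration/facts}\eqref{fibration/facts/fibre-product} for $\int_C$, and the weakly-contractible-limit preservation of the source projection \eqref{slice/properties} for $\forget$. Your added justification that $\int_C\circ\Yoneda_C$ really is the slice functor is a harmless elaboration of what the paper takes as ``by definition.''
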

\begin{proof}
  
  By definition, the construction is a composite of three functors:
  \[
    C \stackrel{\Yoneda_C}{\longrightarrow} \P(C) 
    \stackrel{\int_C}{\longrightarrow} \Cat\slice C 
    \stackrel{\forget}{\longrightarrow} \Cat.
  \]
  The Yoneda embedding preserves limits by \pathcite{htt/yoneda/preserves-limits}; Grothendieck integration preserves fibre products by Proposition \ref{fibration/facts} part \ref{fibration/facts/fibre-product}; source projection on a slice preserves fibre products \eqref{slice/properties}. \qedhere

\end{proof}

\begin{para}[Relative slices]
\label{slice/relative}

  When $h:C\rightarrow D$ is a functor and $F:D$, we write 
  \[
    C\slice F \defeq C\times_DD\slice F
  \]
  when $h$ can be inferred from the context. 
  We will apply this in particular for $h=\Yoneda_C$ the Yoneda embedding.
  (Note that since pullbacks of right fibrations in the Joyal structure are homotopy pullbacks \eqref{model/cat}, this may be taken to be a literal pullback of simplicial sets.) 
  
  By the base change compatibility of the Grothendieck integral \eqref{fibration/integral}, if $F$ is a presheaf on $C$,
  \[
    \int_CF = C\times_{\P(C)}\int_{\P(C)}\Yoneda_{\P(C)}(F) \cong 
      C\times_{\P(C)}\P(C)\slice F \eqdef C\slice F
  \]
  (although $\Universe\P(C)$ is not $\Universe$-small, the Yoneda functor into $\Universe\P(\Universe\P(C))$ is still defined so this formula resolvable against a fixed universe).
  Again applying base change along $h:C\rightarrow D$, we find:
  \[
    C\slice (F\circ h) \cong C\times_DD\slice F.
  \]
  for $F:\P(D)$.
  In particular, for $d:D$, by the Yoneda lemma we have
  \[
    C \slice h^\dagger d = C \slice d
  \]
  where $h^\dagger d\defeq \Yoneda_D(d)\circ h:\P(C)$ is the pullback to $C$ of the presheaf represented by $d$.

\end{para}

\begin{corollary}[Presheaves and slicing]
\label{slice/presheaf}

  For any category $C$, presheaf $F$ on $C$, and universe $\Universe$, we have $\Universe\P(C\slice F)\cong \Universe\P(C)\slice F$.

\end{corollary}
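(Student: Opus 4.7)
The plan is to chain two applications of the Grothendieck integral equivalence with the slice-category property of right fibrations \ref{fibration/facts}(\ref{fibration/facts/slice-category}), thereby identifying both sides with the category $\RFib_\Universe(C\slice F)$ of right fibrations over $C\slice F$ with $\Universe$-small fibres.

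On the left-hand side, the Grothendieck construction $\int_{C\slice F}$ of \eqref{fibration/integral} is a fully faithful embedding of $\Universe\P(C\slice F)$ into $\Cat\slice(C\slice F)$ with essential image $\RFib_\Universe(C\slice F)$.  On the right-hand side, I would first invoke the identification $\int_C F \cong C\slice F$ recorded in \eqref{slice/relative}: the Grothendieck integral $\int_C$ is fully faithful, so passing to slices yields an equivalence
\[
  \Universe\P(C)\slice F \;\simeq\; \RFib_\Universe(C)\slice(C\slice F).
\]
Now Proposition \ref{fibration/facts}(\ref{fibration/facts/slice-category}), applied to the right fibration $C\slice F\to C$, gives $\RFib(C)\slice(C\slice F)\simeq \RFib(C\slice F)$; this restricts to the $\Universe$-small full subcategories on each side, since by \ref{fibration/facts}(\ref{fibration/facts/left-division}) any $E'\to C\slice F$ in the slice is itself a right fibration and its fibres over a point $x:C\slice F$ coincide with the fibres of the composite $E'\to C$ over the image of $x$.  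Concatenating these equivalences produces the desired isomorphism.

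The main obstacle I anticipate is universe bookkeeping: since $F$ itself need not be $\Universe$-small, the right-hand side $\Universe\P(C)\slice F$ must be parsed as the pullback $\Universe\P(C)\times_{\Universe'\P(C)}\Universe'\P(C)\slice F$ for a universe $\Universe'$ containing $F$, and one must verify that the composite equivalence above is insensitive to the choice of $\Universe'$ and matches the $\Universe$-small fibre condition on $\RFib(C\slice F)$. The argument of the previous paragraph --- identifying fibres over $C\slice F$ with fibres over $C$ --- is exactly what supplies this compatibility, so no further delicate cardinal arithmetic is needed.
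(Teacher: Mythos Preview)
Your core argument --- chaining the Grothendieck equivalence $\P\cong\RFib$ with the slice identity of Proposition \ref{fibration/facts}(\ref{fibration/facts/slice-category}) --- is exactly the paper's approach.

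The universe bookkeeping, however, diverges and your version has a gap. You characterise the essential image of $\int$ on $\Universe\P(C)$ as right fibrations with $\Universe$-small fibres, but the paper's definition of a $\Universe$-presheaf (\ref{size}) is ``equivalent to a $\Universe$-colimit of representables''; these agree only when $C$ itself is $\Universe$-small. For a counterexample, take $C$ a large discrete set and $F$ the terminal presheaf: every fibre is a point, yet $F$ is not a $\Universe$-small colimit of representables. So your fibre-matching argument does not pin down $\Universe\P$ on either side in the stated generality. The paper handles this differently: it first proves the equivalence for a single universe $\Universe'\gg C,F$, then observes that under this equivalence the representable presheaves on $C\slice F$ correspond to morphisms into $F$ with representable source in $\P(C)$. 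Since $\Universe$-smallness on each side is \emph{defined} as being a $\Universe$-colimit of such objects, and the equivalence preserves colimits, it restricts to $\Universe$-small objects for every $\Universe$. Your fibre argument is correct when $C\ll\Universe$, but for the full statement you should track representables rather than fibres.
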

\begin{proof}

  The case $C,F\ll\Universe$ follows by the Grothendieck equivalence and Proposition \ref{fibration/facts}-\ref{fibration/facts/slice-category}. 
  The identification matches representable objects on the left with morphisms with representable source on the right; hence the sets of $\Universe$-small objects for any $\Universe$.
  \qedhere
  
\end{proof}

\begin{corollary}[Yoneda embedding is dense]
\label{yoneda/is-dense}

  Let $F$ be a presheaf on $C$.
  Then $F$ is a colimit in $P(C)$ of $C\slice F\rightarrow P(C)\slice F$.
  
\end{corollary}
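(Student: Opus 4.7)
The plan is to deduce this density result directly from the preceding Corollary \ref{slice/presheaf} together with the already-established fact \pathcite{cis/presheaf/colimit-of-yoneda-is-final} that for any category $A$, the colimit of the Yoneda embedding $\Yoneda_A:A\rightarrow\P(A)$ is the terminal presheaf. Applied to $A = C\slice F$, this gives $\colim \Yoneda_{C\slice F} = \point_{\P(C\slice F)}$.

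Next I would transport this along the equivalence $\P(C\slice F) \cong \P(C)\slice F$ from Corollary \ref{slice/presheaf}. Under this equivalence, representables correspond to morphisms with representable source, so the Yoneda embedding $\Yoneda_{C\slice F}$ is identified with the inclusion $C\slice F \hookrightarrow \P(C)\slice F$ (a cell $x\rightarrow F$ in $C\slice F$ is sent to the corresponding morphism $\Yoneda_C(x)\rightarrow F$ in the slice). Moreover the terminal object of $\P(C)\slice F$ is the identity morphism $\id_F:F\rightarrow F$, so the identification forces
\[
  \colim\bigl[C\slice F \rightarrow \P(C)\slice F\bigr] \cong \id_F
\]
in $\P(C)\slice F$.

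Finally, I would apply the source projection $\source:\P(C)\slice F \rightarrow \P(C)$ and use that $\source$ preserves all colimits: a colimiting cone in a slice $\mathcal{E}\slice X$ has apex that is simply the colimit of the underlying diagram in $\mathcal{E}$ together with the induced map to $X$. Applying $\source$ to $\id_F$ yields $F$, yielding the desired formula
\[
  F \cong \colim\bigl[C\slice F \rightarrow \P(C)\slice F \xrightarrow{\source} \P(C)\bigr].
\]

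There is no real obstacle here; the work has already been done in Corollary \ref{slice/presheaf}. The only point requiring modest care is verifying that the equivalence $\P(C\slice F)\cong\P(C)\slice F$ matches the Yoneda embedding of $C\slice F$ with the inclusion into $\P(C)\slice F$ coming from $\Yoneda_C$, but this is built into the construction of the equivalence (representables on the left correspond to morphisms from representables on the right, i.e.\ to the Grothendieck integral of $F$, which by \eqref{slice/relative} is precisely $C\slice F$).
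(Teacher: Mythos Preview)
Your proof is correct and follows exactly the same approach as the paper, which simply cites Corollary \ref{slice/presheaf} and \pathcite{cis/presheaf/colimit-of-yoneda-is-final}; you have merely spelled out the details of how these two ingredients combine.
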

\begin{proof}

  By Corollary \ref{slice/presheaf} and \pathcite{cis/presheaf/colimit-of-yoneda-is-final}. \qedhere
  
\end{proof}

\begin{para}[Right fibrations and homotopy invariance]

  From this point onwards, the term `right fibration' should be taken to mean `an object of $\mathbf{RFib}$' rather than a strict right fibration, i.e.~an object of $\mathbf{RFib}$ which is also an inner fibration.
  This approach seems to be becoming more accepted in higher category theory \cite{gepner2020lax, macpherson2020bivariant}.
  It is justified in the present case because our uses for right fibrations are, firstly, as avatars for presheaves through the equivalence $\P(C)\cong\RFib(C)$, and secondly, as a class abstracting some of the properties of the slice projections $C\slice x\rightarrow C$, which we have also defined homotopy invariantly.
  Proposition \ref{fibration/facts} verifies that the standard stability properties of right fibrations that one takes for granted are, in fact, satisfied for this homotopy-invariant definition.

\end{para}

%%%%%%%%%%%%%%%%%%%%%%%%%%%%%%%%%%%%%%%%%%%%%%%%%%%%%%%%%%%%%%%%%%%%%%%%%%%%%%

%%%%%%%%%%%%%%%%%%%%%%%%%%%%%%%%%%%%%%%%%%%%%%%%%%%%%%%%%%%%%%%%%%%%%%%%%%%%%%

\subsection{Kan extension}

For the purposes of this paper, the approach to Kan extensions detailed in \cite{HTT}, \cite{cisinski2019higher} need some additional clarification. 
The approach taken here is essentially an $\infty$-categorical reading of \cite[X.3, Thm.~1]{mac2013categories}, and it has the advantage that it is defined in one go for all types of functors with arbitrary source and target categories (the treatment in \cite{HTT} is defined separately for Kan extension along fully faithful and general functors, and that of \cite{HCHA} only addresses the case of target categories which admit enough colimits).

\begin{para}[Lax maps of diagrams]
\label{diagram/lax-map}

  There are (at least) two reasonable ways to make sense of the notion of a map of diagrams: commuting triangles and lax commuting triangles.
  A \emph{(left) lax commuting triangle} $(I,J,C,h,U,V,\psi)$,\footnote{Also called `oplax'.} written graphically
  \[
    \begin{tikzcd}[row sep = tiny]
      I \ar[dd, "h"'] \ar[dr, "U"] \\
      {} \ar[r, phantom, "\Downarrow" marking, near start] & C \\
      J \ar[ur, "V"']
    \end{tikzcd}
  \]
  is the data of functors $U:I\rightarrow C$, $h:I\rightarrow J$, $V:J\rightarrow C$, and a natural transformation $\psi:U\rightarrow Vh$ of functors $I\rightarrow C$.
  Say also that $V$ is a \emph{(left) lax extension} of $U$ along $h$, or that $(h,\psi)$ is a \emph{lax map} from $(I,U)$ to $(J,V)$.
  Lax maps of diagrams in $C$ can be composed by the rule
  \[
    (h',\psi')\circ (h,\psi) \defeq (h'\circ h, \psi'\circ(h^*\psi)).
  \]  
  Because colimits over $I$ are a functor on $\Fun(I,C)$, a lax map of diagrams induces a map of colimits $\colim U\rightarrow\colim V$ when both exist.
  
  In the case that $\psi$ is invertible, we recover the more familiar notion of a commuting triangle, that is a map $\Delta^2\rightarrow \Cat$.
  If an \emph{a priori} lax extension has invertible $\psi$, call it a \emph{strict} extension.

\end{para}

\begin{para}[Lax squares]

  Similarly, we define a \emph{lax square}, depicted
  \[
    \begin{tikzcd}
      I \ar[r, "U"] \ar[d, "h"] \ar[dr, phantom, "\Downarrow" marking] & 
      C \ar[d, "f"] \\
      J \ar[r, "V"] & D
    \end{tikzcd}
  \]
  to be the data of functors $I\rightarrow C\rightarrow D$ and $I\rightarrow J\rightarrow D$ plus a map $\psi:fU\rightarrow Vh$.
  A diagonally transposed lax square is called an \emph{oplax square} (I have chosen this convention quite arbitrarily).
  The lower left half of a lax square yields a left lax map of diagrams $(h,\psi):(I,fU)\dashrightarrow (J,V)$.

\end{para}

\begin{definition}[Kan extension]
\label{kan-extension/definition}

  A lax triangle
  \[
    \begin{tikzcd}[row sep = tiny]
      I \ar[dd, "h"'] \ar[dr, "U"] \\
      {} \ar[r, phantom, "\Downarrow" marking, near start] & D \\
      J \ar[ur, "V"']
    \end{tikzcd}
  \]
  is said to exhibit $V$ as a \emph{left Kan extension} of $U$ along $h$ if for each $j:J$ the induced cone
  \[
    I\slice j \rightarrow D\slice V_j
  \]
  is a colimit cone.

\end{definition}

\begin{remark}

  If 
  \[
    \begin{tikzcd}[row sep = tiny]
      I\slice j \ar[dd, "h"'] \ar[dr, "U"] \\
      {} \ar[r, phantom, "\Downarrow" marking, near start] & D\slice V_j \\
      J\slice j \ar[ur, "V"']
    \end{tikzcd}
  \]
  is a left Kan extension for all $j:J$, then in particular $V_j=\colim[I\slice j\rightarrow D]$, so $V$ is a left Kan extension of $U$ along $h$.
  On the other hand, by \cite[Thm.~6.4.13]{cisinski2019higher} and \pathcite{cis/fibration/right/is-smooth}, the converse holds.

\end{remark}

\begin{lemma}
\label{kan-extension/restrict-codomain}

  Let 
  \[
    \begin{tikzcd}[row sep = tiny]
      I \ar[dd, "h"'] \ar[dr, "U"] \\
      {} \ar[r, phantom, "\Downarrow" marking, near start] & D \\
      J \ar[ur, "V"']
    \end{tikzcd}
  \]
  be a left lax extension, and let $D'\subseteq D$ be a full subcategory containing the images of $V$ and $U$ and whose inclusion preserves colimits (when they exist).
  Then this triangle is a left Kan extension diagram if and only if the triangle
  \[
    \begin{tikzcd}[row sep = tiny]
      J \ar[dd, "h"'] \ar[dr, "V"] \\
      {} \ar[r, phantom, "\Downarrow" marking, near start] & D' \\
      I \ar[ur, "U"']
    \end{tikzcd}
  \]
  obtained by restricting  the codomain to $D'$ is a left Kan extension. \qedhere
  
\end{lemma}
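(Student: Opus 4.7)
The plan is to reduce both the $D$-version and the $D'$-version of the Kan extension property to a pointwise check over $j:J$, and then observe that fullness of $D'\subseteq D$ together with colimit-preservation of the inclusion make the two checks equivalent. By Definition~\ref{kan-extension/definition}, each side says that, for every $j:J$, the cone $I\slice j\to D\slice V_j$ (respectively $I\slice j\to D'\slice V_j$) induced by $\psi$ is a colimit cone. Since $D'$ contains the images of both $U$ and $V$, this cone automatically factors through $D'\slice V_j$, so both formulations concern the same underlying diagram and the same candidate vertex, merely in different ambient categories.

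Next I would apply the mapping-space criterion \eqref{colimit-cone}: the cone is a colimit in $D$ iff the canonical map
\[
  D(V_j,y)\;\longrightarrow\;\Cone(U|_{I\slice j},y)
\]
is an equivalence for every $y:D$, and likewise in $D'$ ranging over $y:D'$. The ``only if'' implication is then immediate from fullness of $D'\subseteq D$: mapping spaces agree, the cones-functor is defined at the level of the ambient category, and the equivalence holding for all $y:D$ in particular holds for $y:D'$. The ``if'' implication is where the colimit-preservation hypothesis does work: assuming the $D'$-cone is a colimit in $D'$, the inclusion $D'\hookrightarrow D$ transports the colimit to $D$ with the same vertex $V_j$ and the same cone, establishing the universal property against every $y:D$, including those outside $D'$.

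There is no real obstacle here — the argument is a routine unwinding. The only delicate point is bookkeeping: formulating the colimit-cone criterion in the form where the test object $y$ may range over the whole ambient category, so that fullness and colimit preservation each do exactly the expected job in one direction.
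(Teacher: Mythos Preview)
Your argument is correct and is exactly the paper's approach, just unpacked: the paper's one-line proof (``a cone in $D'$ is a colimit if and only if its image in $D$ is'') is precisely the biconditional you establish, with fullness handling one direction and colimit-preservation the other.
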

\begin{proof}

  A cone in $D'$ is a colimit if and only if its image in $D$ is. \qedhere
  
\end{proof}

The following result allows us to reduce our notion of Kan extension to the case considered in \cite[\S6.4]{HCHA}.

\begin{proposition}[Existence of Kan extensions]
\label{kan-extension/exist-unique}

  Let $U:I\rightarrow D$, $h:I\rightarrow J$ be two functors.
  The following are equivalent:
  \begin{enumerate}
  \item
    $U$ admits a left Kan extension along a functor $h:I\rightarrow J$.
    
  \item
    The left Kan extension of the composite $U':I\rightarrow D\rightarrow\Fun(D,\Space)^\op$ along $h$ has image in $D$.
    
  \item
    For each $j:J$ the diagram $h\slice j:I\slice j\rightarrow D$ admits a colimit.
  \end{enumerate}
  In this case, the Kan extension is unique.
  
\end{proposition}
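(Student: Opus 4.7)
My plan is to reduce the existence question to the case of a cocomplete target by fully faithfully embedding $D$ into $\widetilde D \defeq \Fun(D, \Space)^\op$ via the opposite Yoneda embedding. This embedding preserves colimits whenever they exist in $D$ (the ordinary Yoneda $D^\op \to \Fun(D, \Space)$ preserves limits by \pathcite{htt/yoneda/preserves-limits}, and opposites swap limits and colimits), and $\widetilde D$ is cocomplete since $\Fun(D, \Space)$ is complete by \pathcite{htt/limit/of-functors}. The standard existence theory for Kan extensions into cocomplete targets \cite[\S6.4]{HCHA} then governs the key existence step; everything else is transported back along a fully faithful, colimit-preserving embedding using Lemma \ref{kan-extension/restrict-codomain}.

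I would establish the equivalence via the cycle $(1) \Rightarrow (3) \Rightarrow (2) \Rightarrow (1)$. For $(1) \Rightarrow (3)$: unwinding Definition \ref{kan-extension/definition}, the hypothesis that $I\slice j \to D\slice V_j$ is a colimit cone exhibits $V_j$ as the colimit in $D$ of $h\slice j : I\slice j \to D$. For $(3) \Rightarrow (2)$: since $\widetilde D$ is cocomplete, the left Kan extension $V': J \to \widetilde D$ of $U'$ exists and is given pointwise by $V'(j) = \colim[I\slice j \to D \to \widetilde D]$; under (3) the composite $I\slice j \to D$ has a colimit $W_j$ in $D$, and preservation of colimits by the opposite Yoneda identifies $V'(j)$ with the image of $W_j$, so $V'$ lands in the essential image of $D$. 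For $(2) \Rightarrow (1)$: apply Lemma \ref{kan-extension/restrict-codomain} to the full subcategory $D \subseteq \widetilde D$ --- it contains the images of both $U'$ and $V'$ by (2), and its inclusion preserves existing colimits, so the restricted triangle is a left Kan extension with target $D$. Uniqueness then falls out of the pointwise description combined with uniqueness of colimits: two Kan extensions $V_1, V_2$ agree on objects via the universal property at each $I\slice j$, and the functoriality in $j$ is forced by the same.

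The main obstacle I anticipate is step $(2) \Rightarrow (1)$: one has to verify that preservation of colimits makes the hypotheses of Lemma \ref{kan-extension/restrict-codomain} apply verbatim to the (essential image of the) full subcategory $D \subseteq \widetilde D$, and that the resulting lax triangle in $D$ matches the original data of $U$ upon restriction. A secondary nuisance is universe-hygiene: one must work in a universe large enough that $\Fun(D, \Space)$ exists and is cocomplete for the $I\slice j$-shaped diagrams in question, which is handled by the typical ambiguity conventions of \S\ref{universe/ambiguity} applied to any universe containing both $I$ and $D$.
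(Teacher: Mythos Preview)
Your proposal is correct and follows essentially the same strategy as the paper: embed $D$ fully faithfully and colimit-preservingly into the cocomplete $\Fun(D,\Space)^\op$, invoke the pointwise formula there, and transfer back via Lemma~\ref{kan-extension/restrict-codomain}. The only notable difference is in the uniqueness step: the paper works in the cocomplete target where $h_!$ is a genuine left adjoint to restriction, obtains the comparison $h_!U \to V$ from the adjunction unit, and checks it is an equivalence pointwise --- this produces the natural isomorphism directly, whereas your phrase ``functoriality in $j$ is forced by the same'' leaves the construction of the comparison as a natural transformation (rather than a mere objectwise match) slightly implicit.
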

\begin{proof}

  By Lemma \ref{kan-extension/restrict-codomain}, by embedding $D$ in $\Fun(D,\Space)^\op$ we may reduce to the case that $D$ is cocomplete.
  Now, by \cite[Prop.~6.4.9]{cisinski2019higher}, restriction $-\circ h:D^J \rightarrow D^I$ admits a left adjoint whose value on $V$ satisfies the condition of Definition \ref{kan-extension/definition}.
  This handles existence.
  
  For uniqueness, take an extension $V:J\rightarrow D$, $\psi:U\rightarrow Vh$ satisfying the condition. 
  By the adjoint mapping property for $h_!\dashv h^*$, there is a unique map $h_!U\rightarrow V$ factoring $\psi$.
  Then the induced maps $\colim_{I\slice j}(U\slice j) \rightarrow (h_!U)_j$, $V_j$ are both isomorphisms, whence $\psi_j:(h_!U)_j \tilde\rightarrow V_j$ for all $j:J$, whence $\psi$ is invertible. \qedhere
  
\end{proof}

\begin{para}[Composition and base change of Kan extensions] \label{kan-extension/base-change}
  
  In light of Proposition \ref{kan-extension/exist-unique}, we are free to leverage the results of \cite{HCHA} on Kan extensions.
  In particular:
  \begin{itemize}
    \item
      Because left Kan extension of functors into a cocomplete category is left adjoint to the restriction functor, and both restriction functors (by associativity of composition) and taking left adjoints (\cite[Prop.~6.1.8]{HCHA}) are compatible with composition, the composite of two left Kan extensions is a left Kan extension.
      
    \item
      By \cite[Prop.~6.4.3]{HCHA}, left Kan extension along a proper functor commutes with base change along any functor, and left Kan extension along any functor commutes with base change along a smooth functor --- for example, along a right fibration \pathcite{cis/fibration/right/is-smooth}.
  \end{itemize}

\end{para}

\begin{para}[Canonical map from a left Kan extension]
\label{kan-extension/canonical-map}

  A consequence of the proof of Proposition \ref{kan-extension/exist-unique} is that if $V':J\rightarrow D$ is another lax extension of $U$ along $h$, then there is a unique map $V\rightarrow V'$ restricting to a factorisation $U\rightarrow hV\rightarrow hV'$. 
  The value of this canonical map at $j:J$ can be understood as the value on the vertex of the map of cones $U\slice j \rightarrow V'\slice j$ which is uniquely determined because $U\slice j:I\slice j\rightarrow D\slice V_j$ is a colimit cone.

\end{para}

\begin{example}[Yoneda pullback as a Kan extension]
\label{yoneda/pullback/as-kan-extension}

  Yoneda pullback $h^*:\P(D)\rightarrow \P(C)$ along a functor $h:C\rightarrow D$ is a left Kan extension of $\Yoneda_C$ along $\Yoneda_D\circ h$: this follows from the formula $C\slice h^*F = C\slice F$ (\ref{slice/relative}) and Corollary \ref{yoneda/is-dense}.

\end{example}

%%%%%%%%%%%%%%%%%%%%%%%%%%%%%%%%%%%%%%%%%%%%%%%%%%%%%%%%%%%%%%%%%%%%%%%%%%%%%%%%

\subsection{Density} \label{dense/}

The property of \emph{density} is central to the relationship between the category of cells of a sketch and its models.
Yet, it seems not to have received much discussion (under this name at least) in the $\infty$-categorical context.
(The related notion of \emph{strong generator} appears in \cite[Def.~5.3.2]{carchedi2020higher}; a collection of objects strongly generates a category if and only if the inclusion of the full subcategory they span is dense.)

\begin{para}[On Yoneda pullback]

  A functor $f:C\rightarrow D$ induces a lax triangle
  \[
    \begin{tikzcd}
      C \ar[r, "f"] \ar[d, "\Yoneda_C"'] & 
      D \ar[dl, "f^\dagger"] \\
      \P(C) & {} \ar[ul, phantom, "\Rightarrow" near end] 
    \end{tikzcd}
  \]
  defined by $f^\dagger\defeq f^*\circ\Yoneda_D$, which we have seen is a left Kan extension.
  This triangle is compatible with composition in $f$ in the following ways:
  \begin{itemize}
    \item 
      If $e:B\rightarrow C$, then $(fe)^\dagger \cong e^*\circ f^\dagger$ by associativity of composition.
      
    \item
      If $g:D\rightarrow E$ admits a right adjoint $h$, then $f^\dagger\circ h \cong (gf)^\dagger$ by the mapping space definition of adjunction \cite[Def.~6.1.3]{HCHA}.
      
  \end{itemize}
\end{para}

\begin{proposition}[Criteria for density]
\label{dense/criterion}

  Let $j:C\rightarrow D$ be a functor. The following conditions are equivalent:
  \begin{enumerate}
    \item \label{dense/criterion/kan-extension}
      $\id_D$ is a left Kan extension of $j$ along itself.
      
    \item \label{dense/criterion/colimit} 
      For each $X:D$, $X$ is a colimit of $j\slice X:C\slice X\rightarrow D$.
      
    \item \label{dense/criterion/counit}
      For each $X:D$, the canonical counit map $\epsilon_X:j_!j^\dagger X\rightarrow X$ is invertible.
      
    \item \label{dense/criterion/pullback} 
      the Yoneda pullback $j^\dagger:D\rightarrow\P(C)$ is fully faithful.
  \end{enumerate}
  If $D$ is cocomplete and $j^\dagger$ is accessible, then under these conditions $D$ is a localisation of $\P(C)$. 
  
\end{proposition}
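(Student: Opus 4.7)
The plan is to prove $(1)\Leftrightarrow(2)\Leftrightarrow(3)$ by unwinding definitions, and then establish $(2)\Leftrightarrow(4)$ by a Yoneda density computation of mapping spaces out of $j^\dagger X$; the final sentence about localisations follows formally once we have a genuine adjunction $j_!\dashv j^\dagger$.

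First, $(1)\Leftrightarrow(2)$ is essentially a rereading of Definition \ref{kan-extension/definition} in the case where $h=j$ and the extension of $j$ is the identity functor $V=\id_D$: the required colimit-cone condition at $X:D$ is precisely that $C\slice X\to D\slice X$ is a colimit cone with vertex $X$, which is to say that $X$ is the colimit of $j\slice X:C\slice X\to D$. Next, $(2)\Leftrightarrow(3)$: by \S\ref{slice/relative} we have $C\slice j^\dagger X\cong C\slice X$, so using the pointwise formula for the left adjoint $j_!$ to $j^\dagger$ (viewed either as the left Kan extension of $j$ along $\Yoneda_C$, or adjointly as a colimit over the category of elements), we get $j_!j^\dagger X\cong \colim[C\slice X\to D]$ whenever this colimit exists; the canonical counit $\epsilon_X$ is then the comparison map from this colimit to $X$, and invertibility of $\epsilon_X$ for all $X$ is the same as condition $(2)$.

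For the main step, $(2)\Leftrightarrow(4)$, the key calculation uses Corollary \ref{yoneda/is-dense} together with the formula $j^\dagger Y(c)=D(jc,Y)$. Writing $j^\dagger X$ as the colimit of $\Yoneda_C\circ(C\slice X\to C)$ in $\P(C)$ and mapping out into $j^\dagger Y$, we compute
\[
  \P(C)(j^\dagger X,j^\dagger Y)\;\cong\;\lim_{c:C\slice X}\P(C)(\Yoneda_C c,j^\dagger Y)
  \;\cong\;\lim_{c:C\slice X}D(jc,Y)
  \;\cong\;\Cone(j\slice X,Y),
\]
and it is straightforward to check that the resulting map $D(X,Y)\to \Cone(j\slice X,Y)$ is the canonical one induced by the cone $C\slice X\to D\slice X$. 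Hence $j^\dagger$ is fully faithful at $X$ (for all $Y$) if and only if $X$ represents $\Cone(j\slice X,-)$, i.e.\ condition $(2)$ holds.

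Finally, assume $D$ is cocomplete and $j^\dagger$ is accessible. Then $j_!:\P(C)\to D$ is globally defined (by Proposition \ref{kan-extension/exist-unique}) and is left adjoint to $j^\dagger$; fully faithfulness of $j^\dagger$ exhibits $D$ as a reflective subcategory of $\P(C)$, and accessibility of the reflector's right adjoint identifies it as an accessible (hence Bousfield) localisation via \pathcite{htt/bousfield-localization/recognition/local-object}. The main obstacle is bookkeeping in the $(2)\Leftrightarrow(4)$ step, specifically verifying that the map compared is indeed the one induced functorially by $j^\dagger$ rather than an unrelated isomorphism; this is where the naturality in the density formula of Corollary \ref{yoneda/is-dense} has to be used with care.
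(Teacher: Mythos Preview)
Your proof is correct and matches the paper's argument closely. The only organisational difference is in linking fully faithfulness of $j^\dagger$ to the other conditions: you go $(2)\Leftrightarrow(4)$ by writing $j^\dagger X$ as a colimit of representables and computing $\P(C)(j^\dagger X,j^\dagger Y)\cong\Cone(j\slice X,Y)$ directly, whereas the paper goes $(3)\Leftrightarrow(4)$ by invoking the adjunction $j_!\dashv j^\dagger$ at the level of $\P(D)$ to identify $\P(C)(j^\dagger X,j^\dagger-)\cong\P(D)(j_!j^\dagger X,-)$ and then applying the dual Yoneda lemma to see that the comparison $D(X,-)\to\P(C)(j^\dagger X,j^\dagger-)$ is exactly precomposition with $\epsilon_X$. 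These are the same calculation; the paper's phrasing via the adjunction counit handles your ``bookkeeping'' worry about identifying the comparison map a little more cleanly, since the identification of $D(X,-)\to\P(D)(j_!j^\dagger X,-)$ with $-\circ\epsilon_X$ is a formal property of adjunctions.
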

\begin{proof}

  Conditions \ref{dense/criterion/kan-extension} and \ref{dense/criterion/colimit} are equivalent by definition of Kan extension \eqref{kan-extension/definition}.

  \begin{labelitems}
    \item[\ref{dense/criterion/colimit}$\Leftrightarrow$\ref{dense/criterion/counit}]
      Since $j^\dagger$ is a left Kan extension of $\Yoneda_C$ along $j$ \eqref{yoneda/pullback/as-kan-extension} and $j_!$ preserves colimits, the counit is computed as $        \epsilon_X:\colim_{X:C\slice Y}j(X) \rightarrow Y$.

    \item[\ref{dense/criterion/counit}$\Leftrightarrow$\ref{dense/criterion/pullback}]
      For $X:D$ the composite
      \[
        \P(C)(j^\dagger X,j^\dagger-) \rightarrow \P(D)(j_!j^\dagger X,j_!j^\dagger-) \rightarrow \P(D)(j_!j^\dagger X,-)
      \]
      is an equivalence by adjunction $j_!\dashv j^\dagger$, hence by the (dual) Yoneda lemma $D(X,-)\rightarrow\P(C)(j^\dagger X,j^\dagger-)$ is invertible if and only if $-\circ\epsilon_j$ is invertible. \qedhere
      
  \end{labelitems}

\end{proof}

\begin{definition}

  A functor satisfying the conditions of Proposition \ref{dense/criterion} is said to be \emph{dense}.
  
\end{definition}

\begin{example}

  The canonical example of a dense functor is the Yoneda embedding, which is dense by Corollary \ref{yoneda/is-dense}.
  
\end{example}

\begin{proposition}[Properties of dense functors] \label{dense/properties}

  Let $f:C\rightarrow D$ and $g:D\rightarrow E$ be a string of functors.
  \begin{enumerate}
  
    \item \label{dense/properties/bousfield}
      If $f$ is dense and $g$ is a Bousfield localisation, then $gf$ is dense.
      
    \item \label{dense/properties/localization}
      If $f$ is a localisation and $g$ is dense, then $gf$ is dense.
      In particular, a localisation is dense.
      
    \item \label{dense/properties/extension}
      If $gf$ is dense and $g$ is a left Kan extension of $gf$ along $f$, then $g$ is dense.
    
    \item \label{dense/properties/fully-faithful}
      If $gf$ is dense and $g$ is fully faithful, then $f$ is dense. 
      Moreover, $g$ is a left Kan extension of $gf$ along $f$, and $g$ is dense.
      
    \item \label{dense/properties/left-adjoint}
      A dense left adjoint is a Bousfield localisation.
      
    \item \label{dense/properties/final}
      If $f$ is dense, then any final object of $D$ is a colimit of $f$.
      
  \end{enumerate}
  
\end{proposition}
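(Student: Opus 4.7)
The plan is to reduce most items to the Yoneda-pullback criterion \ref{dense/criterion/pullback} of Proposition~\ref{dense/criterion}, using the factorisations $(gf)^\dagger\cong f^*\circ g^\dagger$ and, whenever $g$ admits a right adjoint $h$, $(gf)^\dagger \cong f^\dagger\circ h$ recorded in the discussion preceding the proposition.

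Parts \ref{dense/properties/bousfield} and \ref{dense/properties/localization} fall to this immediately. A Bousfield localisation has fully faithful right adjoint, so in \ref{dense/properties/bousfield} both factors of $(gf)^\dagger\cong f^\dagger\circ h$ are fully faithful, whence so is the composite. For \ref{dense/properties/localization}, a localisation is characterised by $f^*:\P(D)\to\P(C)$ being fully faithful; composing with fully faithful $g^\dagger$ (density of $g$) yields the conclusion, and the ``in particular'' clause is the case $g=\id_D$.

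For \ref{dense/properties/extension} and \ref{dense/properties/fully-faithful} I switch to the Kan extension criterion \ref{dense/criterion/kan-extension} and exploit the composition law for left Kan extensions \eqref{kan-extension/base-change}. In \ref{dense/properties/extension}, the chain $\id_E = (gf)_!(gf) = g_!f_!(gf) = g_!(g)$ follows from density of $gf$ and the hypothesis $g = f_!(gf)$, exhibiting $g$ as dense. In \ref{dense/properties/fully-faithful}, the key observation is that full faithfulness of $g$ produces an equivalence of comma categories $C\slice_{gf}g(Y)\simeq C\slice_f Y$ for each $Y:D$: transporting the density colimit for $gf$ at $g(Y)$ across this equivalence and using that a fully faithful functor reflects colimits shows that $Y$ is the colimit in $D$ of $C\slice_f Y \to D$, simultaneously exhibiting $f$ as dense and $g$ as $f_!(gf)$; density of $g$ then drops out of \ref{dense/properties/extension}.

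For \ref{dense/properties/left-adjoint}, if $g \dashv g^R$ the adjunction gives $g^\dagger\cong \Yoneda_D\circ g^R$; since $\Yoneda_D$ and $g^\dagger$ are both fully faithful, two-out-of-three forces $g^R$ to be fully faithful, making $g$ a Bousfield localisation. Lastly, \ref{dense/properties/final} is immediate: if $\omega$ is final then $D\slice\omega\to D$ is an equivalence, hence so is $C\slice_f\omega\to C$, and the density colimit for $\omega$ collapses to $\colim f$. The main technical nuisance will be in \ref{dense/properties/fully-faithful} --- pinning down the pullback identification $C\slice_{gf}g(Y)\simeq C\slice_f Y$ cleanly and verifying that fully faithful functors reflect colimits in the homotopy-coherent sense, since neither is a completely formal unwinding.
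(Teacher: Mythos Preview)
Your proposal is correct and follows essentially the same route as the paper: parts \ref{dense/properties/bousfield}, \ref{dense/properties/localization}, and \ref{dense/properties/left-adjoint} via the Yoneda-pullback criterion and the factorisations of $(gf)^\dagger$, part \ref{dense/properties/extension} via the displayed chain $\id_E=(gf)_!(gf)=g_!f_!(gf)=g_!g$, part \ref{dense/properties/fully-faithful} via the comma-category identification $C\slice gX\simeq C\slice X$ plus reflection of colimits by fully faithful functors, and part \ref{dense/properties/final} as the special case $X=\omega$ of criterion \ref{dense/criterion/colimit}. The only difference is that the paper dispatches the ``technical nuisance'' you flag in \ref{dense/properties/fully-faithful} in one line without comment, and treats \ref{dense/properties/final} as a bare citation of \ref{dense/criterion/colimit} rather than unpacking the slice equivalence.
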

\begin{proof}

  \begin{labelitems}
    
    \item[\ref{dense/properties/bousfield}]
      We have $ f^\dagger \circ g^*= (gf)^\dagger$ is a composite of fully faithful functors, hence fully faithful.
      
    \item[\ref{dense/properties/localization}]
      $f$ a localisation means that $f^*:\P(D)\rightarrow\P(C)$ is fully faithful (with image the local presheaves); thus $(gf)^\dagger \cong f^*\circ g^\dagger$ is fully faithful.
      
    \item[\ref{dense/properties/extension}]
      \begin{align*}
        \id_E &= (gf)_!(gf) && \text{density of }gf \\
        &=g_!f_!(gf) && \text{composition of Kan extensions} \\
        & =g_!(g) && g=f_!(gf)
      \end{align*}
      
    \item[\ref{dense/properties/fully-faithful}]
      By restriction of codomain for left Kan extensions: we have 
      \[
        gX = \colim_{S:C\slice gX}gfS = \colim_{S:C\slice X}gfS \quad 
        \Rightarrow\quad  X = \colim_{S:C\slice X}fS
      \]
      which proves that $f$ is dense and that $g$ is a left Kan extension; now $g$ is dense by part \eqref{dense/properties/extension}.
      
    \item[\ref{dense/properties/left-adjoint}]
      Let $f$ be a dense left adjoint functor with right adjoint $h$. 
      Then $f^\dagger = \Yoneda_C\circ h$ is fully faithful by hypothesis, whence so is $h$ by left division of fully faithful functors.
      
    \item[\ref{dense/properties/final}]
      Special case of Proposition \ref{dense/criterion}-\ref{dense/criterion/colimit}.\qedhere
      
  \end{labelitems}

\end{proof}

These properties are mnemonically summarised in the following diagrams:
\[
  \begin{array}{cc}
  \eqref{dense/properties/bousfield} \qquad
  \begin{tikzcd}[column sep = tiny]
    \cdot \ar[rr, "\text{dense}"] 
    \ar[dr, "\Rightarrow\;\text{dense}"' very near start] &&
    \cdot \ar[dl, "\text{Bousfield loc.}" very near start] \\ &\cdot
  \end{tikzcd}
  &
  \eqref{dense/properties/localization} \qquad
  \begin{tikzcd}[column sep = tiny]
    \cdot \ar[rr, "\text{loc.}"] 
    \ar[dr, "\Rightarrow\;\text{dense}"' very near start] &&
    \cdot \ar[dl, "\text{dense}" very near start] \\ &\cdot
  \end{tikzcd}
  \\
  \eqref{dense/properties/extension} \qquad
  \begin{tikzcd}[column sep = tiny]
    \cdot \ar[rr] \ar[dr, "\text{dense}"' very near start] &&
    \cdot \ar[dl, "\text{LKE}" at start, "\Rightarrow\;\text{dense}"] \\ &\cdot
  \end{tikzcd}
  &
  \eqref{dense/properties/fully-faithful} \qquad
  \begin{tikzcd}[column sep = tiny]
    \cdot \ar[rr, "\Rightarrow\;\text{dense}"] \ar[dr, "\text{dense}"' very near start] &&
    \cdot \ar[dl, "\text{fully faithful}" at start, "\Rightarrow\;\text{dense, LKE}"] \\ &\cdot
  \end{tikzcd}
  \end{array}
\]

\begin{proposition}[Recognising left Kan extensions along dense functors] \label{dense/extension}

  Let $j:C\rightarrow D$ be a dense functor. 
  If a left Kan extension along $j$ is a strict extension and preserves colimits, then it is unique with this property.
  
\end{proposition}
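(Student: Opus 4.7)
The plan is to exploit the universal property of left Kan extensions from \eqref{kan-extension/canonical-map} together with the density criterion of Proposition \ref{dense/criterion}-\ref{dense/criterion/colimit}. Suppose $F:D\to E$ is a colimit-preserving strict left Kan extension of $U\defeq Fj$ along $j$, and let $F':D\to E$ be any other colimit-preserving strict extension of $U$ along $j$. Since $F$ is a Kan extension and $F'$ is in particular a lax extension of $U$, \eqref{kan-extension/canonical-map} produces a canonical natural transformation $\alpha:F\to F'$ whose restriction along $j$ is the identity of $U$; it remains to show $\alpha_X$ is invertible for every $X:D$.

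Fix $X:D$. By Definition \ref{kan-extension/definition}, $F(X)$ is the colimit in $E$ of the diagram $U|_{C\slice X}:C\slice X\to E$. On the other hand, density of $j$ (Proposition \ref{dense/criterion}-\ref{dense/criterion/colimit}) presents $X$ as the colimit of $j\slice X:C\slice X\to D$, so colimit-preservation of $F'$ together with $F'j\simeq U$ realises $F'(X)$ as the same colimit $\colim_{c:C\slice X}U(c)$. Tracing through \eqref{kan-extension/canonical-map}, the map $\alpha_X$ is the unique comparison induced between these two colimit presentations by the identity on the restriction to $C$, and is therefore an equivalence.

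The main bookkeeping obstacle is to verify that the two cones $U|_{C\slice X}\to F(X)$ and $U|_{C\slice X}\to F'(X)$ arising in the argument really are colimit cones on a common diagram and that $\alpha_X$ matches the evident comparison between them; this is a formal consequence of the uniqueness of colimits and parallels the uniqueness portion of the proof of Proposition \ref{kan-extension/exist-unique}. If desired, one can reduce to the cocomplete case through Lemma \ref{kan-extension/restrict-codomain} and invoke the adjoint mapping property used in \emph{op.~cit.}~directly, which removes any residual doubt about the identification of $\alpha_X$ with the identity under the two colimit presentations.
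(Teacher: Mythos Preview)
Your argument is correct and uses the same core computation as the paper: density of $j$ plus colimit-preservation of $F'$ forces $F'(X)\simeq\colim_{c:C\slice X}U(c)$, which is the Kan extension formula. The paper's proof is slightly more direct in that it simply observes this identity shows \emph{any} colimit-preserving strict extension is itself a left Kan extension of its restriction, whence uniqueness follows immediately from Proposition \ref{kan-extension/exist-unique}; you instead construct the comparison map $\alpha:F\to F'$ first and then verify it is invertible pointwise, which amounts to the same thing with one extra layer of bookkeeping.
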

\begin{proof}

  Let $f':D\rightarrow E$ be a colimit-preserving (strict) extension of $f:C\rightarrow E$.
  Then for all $d:D$, $f'(d) = \colim_{c:C\slice d}f(c)$, whence $f'$ is a left Kan extension of $f$ along $j$. \qedhere
  
\end{proof}

Density is a much stronger condition than generation under colimits:

\begin{example}[Generating, but not dense]

  Every object of $\Delta^\op$ is a colimit of copies of $\Delta^1$ and $\Delta^0$, but the category as a whole does not embed in $\P(\Delta^{\leq1})$ (which is best understood as a category of directed graphs).
  Even worse, $\Delta^{\leq1}\subseteq\Cat$ generates under colimits, but not every category is a colimit of objects of $\Delta^{\leq 1}$ (we need to take \emph{iterated} colimits).

\end{example}

\begin{example}[A full subcategory of a presheaf category that is not dense in its colimit closure]

  Let $C$ be a two element set. 
  The full subcategory spanned by the final object $1_C$ of $\P(C)$ is not dense in its colimit closure $\langle 1_C\rangle \not\simeq \Space$.
  Indeed, it suffices to observe that $\Map(1_C, 1_C\sqcup 1_C)$ is a four element set.

\end{example}

We return to the following concept in \S\ref{construction/}.

\begin{definition}[Constructibility] \label{diagram/constructible}

  Let $C$ be a category, $K$ a set of diagrams in $C$.
  We refer to the pair $(C\sep K)$ as a \emph{category with constructions}; the elements of $K$ are \emph{constructions} in $C$.
  
  Let $h:C\rightarrow D$ be a dense functor. An object $F$ of $D$ is said to be $K$-\emph{constructible} if it is isomorphic to the colimit in $D$ of an element of $K$ composed with $h$.
  Abbreviated name notwithstanding, $K$-constructibility of course depends on the entire tuple $(C\sep D\sep h\sep K)$.
  By the hypothesis on $K$, all objects in the essential image of $h$ are $K$-constructible.
  We also say that a \emph{diagram} $V:J\rightarrow D$ is $K$-constructible if it admits a $K$-constructible colimit in $D$.
  
  Tautologically, any full subcategory $D'$ of $D$ is a set of $K$-constructible objects for some $K$ --- for example the set of all diagrams in $C$ having colimit in $D'$.

  All our sets of constructions will include all singleton diagrams.
  This ensures that all representable presheaves are constructible, so we get a Yoneda functor $\Yoneda_{C,K}:C\rightarrow \P^K(C)$.
  
\end{definition}

%%%%%%%%%%%%%%%%%%%%%%%%%%%%%%%%%%%%%%%%%%%%%%%%%%%%%%%%%%%%%%%%%%%%%%%%%%%%%%

\subsection{Classifying presheaves}
\label{shape/}

In this section we will construct a left adjoint localisation to the fully faithful functor
\[
  \int_C:\P(C) \rightarrow \Cat\slice C
\]
which takes a diagram $U:I\rightarrow C$ to $|(I\sep U)|_C = \colim_IU \in \P(C)$. 
This localisation can be thought of as a parametrised version of the \emph{groupoid completion} or \emph{classifying space} functor, or an extension of the classifying presheaf construction \eqref{fibration/integral} to functors that are not right fibrations.

\begin{proposition} \label{shape/criterion}

  Let $C:\Cat$, $F$ a presheaf on $C$, $e:I\rightarrow C\slice F = \int_CF$ a diagram.
  Then the following conditions on $e$ are equivalent:
  \begin{enumerate}

    \item \label{shape/criterion/universal}
      For every right fibration $p:E\rightarrow C$, precomposition with $e$ induces an equivalence of spaces $\Fun_C(I\sep E) \tilde\rightarrow \Fun_C(C\slice F\sep E)$.

    \item \label{shape/criterion/formula}
      Considered as a functor $I\rightarrow\P(C)\slice F$, $e$ is a colimit cone.

  \end{enumerate}

\end{proposition}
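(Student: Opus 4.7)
The plan is to reduce both conditions to the single statement that the canonical comparison map $|I|_C \rightarrow F$ in $\P(C)$ is an equivalence, where $|I|_C \defeq \colim_I(\Yoneda_C\circ U)$ for $U\defeq\source\circ e:I\rightarrow C$. The cone structure of $e$---namely, each $e(i)$ carries a distinguished map $\Yoneda_C(Ui)\rightarrow F$---gives rise to this comparison by the universal property of the colimit.

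For condition \eqref{shape/criterion/formula}: the composite $I\rightarrow C\slice F\rightarrow \P(C)\slice F$ with the Yoneda embedding has underlying diagram $\Yoneda_C\circ U$ in $\P(C)$, and since colimits in the slice $\P(C)\slice F$ are computed in $\P(C)$ and then equipped with the induced map to $F$, saying $e$ is a colimit cone with apex $F$ is precisely the assertion that $|I|_C\rightarrow F$ is invertible.

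For condition \eqref{shape/criterion/universal}, let $E\rightarrow C$ be a right fibration with classifying presheaf $G\in\P(C)$. By full faithfulness of the Grothendieck integral \eqref{fibration/integral}, we have $\Fun_C(C\slice F, E)\simeq \Map_{\P(C)}(F,G)$. By base change of the integral \eqref{slice/relative}, $I\times_CE\simeq \int_IU^*G$; sections over $I$ of this right fibration compute $\Fun_C(I,E)$. Writing $1_I=\colim_I\Yoneda_I$ (Corollary \ref{yoneda/is-dense}) and using the adjunction $U_!\dashv U^*$,
\[
  \Fun_C(I,E)\simeq\Map_{\P(I)}(1_I,U^*G)\simeq\Map_{\P(C)}(U_!1_I,G)\simeq\Map_{\P(C)}(|I|_C,G),
\]
since $U_!1_I=\colim_IU_!\Yoneda_I=\colim_I\Yoneda_C\circ U=|I|_C$. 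Under these identifications the precomposition map induced by $e$ corresponds to precomposition with $|I|_C\rightarrow F$; by the Yoneda lemma in $\P(C)$, this is invertible for every $G$ if and only if $|I|_C\rightarrow F$ is itself invertible.

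The main fiddly step is verifying that precomposition by $e$, under each chain of identifications above, really coincides with the canonical comparison $|I|_C\rightarrow F$ induced by the cone structure; this amounts to unwinding naturality of the Grothendieck integral and of the adjunction $U_!\dashv U^*$. With that in hand, both conditions reduce to the same assertion in $\P(C)$ and the equivalence follows.
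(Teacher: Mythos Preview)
Your proposal is correct and follows essentially the same approach as the paper: both arguments identify, for each right fibration $E\simeq C\slice G$, the spaces $\Fun_C(C\slice F,E)$ and $\Fun_C(I,E)$ with mapping/cone spaces in $\P(C)$, so that condition \eqref{shape/criterion/universal} becomes the very definition of a colimit cone. The paper is slightly more direct---it uses the identification $\Fun_C(I,C\slice G)\cong\Fun_{\P(C)}(I,\P(C)\slice G)$ from \eqref{cone} and \eqref{slice/relative} and then simply cites the colimit-cone criterion \eqref{colimit-cone}, whereas you introduce the colimit object $|I|_C$ explicitly and route through the adjunction $U_!\dashv U^*$; this extra step is harmless but not needed.
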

\begin{proof}

  By the Yoneda lemma and the identification $\int_C-\cong C\slice -$ \eqref{slice/relative}, the vertical maps in the commuting square
  \[
    \begin{tikzcd}
      \Fun_C(C\slice F \sep C\slice G) \ar[r] \ar[d, equals] &
      \Fun_C(I\sep C\slice G) \ar[d, equals] \\
      \Fun_{\P(C)}(\P(C)\slice F,\P(C)\slice G) \ar[r] &
      \Fun_{\P(C)}(I,\P(C)\slice G)
    \end{tikzcd}
  \]
  are equivalences for all $G:\P(C)$.
  Hence criterion \eqref{shape/criterion/universal} is equivalent to the lower arrow being an equivalence, i.e.~\eqref{colimit-cone} that $e:I\rightarrow \P(C)\slice F$ is a colimit cone.
  \qedhere

\end{proof}

\begin{definition}
  \label{shape/definition}

  A cone $e$ satisfying the conditions of Proposition \ref{shape/criterion} is said to exhibit $F$ as a \emph{classifying presheaf} for $(U\sep I)$.
  We write $|(U\sep I)|_C$, or simply $|U|_C$ if $I$ can be deduced from the context, for a classifying presheaf of $(U\sep I)$.

\end{definition}

\begin{corollary}

  Suppose $U:I\rightarrow C$ is a right fibration. Then the induced map $I\rightarrow \int_C|U|_C$ is an equivalence.
  
\end{corollary}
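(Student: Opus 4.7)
The plan is to identify the right fibration $U\colon I\to C$ with a pullback of the universal right fibration via a presheaf, and then invoke Yoneda density to recognise that presheaf as the classifying presheaf.

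First, by the Grothendieck equivalence \eqref{fibration/integral}, right fibrations $U\colon I\to C$ correspond to presheaves on $C$, so there is a (unique up to equivalence) presheaf $F$ together with an equivalence $I\simeq C\slice F=\int_CF$ of right fibrations over $C$. Under this identification, the factorisation of $U$ through $|U|_C$ is tautologically the inclusion $C\slice F\hookrightarrow \P(C)\slice F$ induced by the Yoneda embedding of $C$.

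Next, I would apply Corollary \ref{yoneda/is-dense}: the Yoneda-induced functor $C\slice F\to \P(C)\slice F$ is a colimit cone with apex $F$. By Proposition \ref{shape/criterion}\eqref{shape/criterion/formula}, this is exactly the statement that $F$ is a classifying presheaf for $(I,U)$, i.e.\ $F\simeq |U|_C$. Taking integrals recovers the claimed equivalence $I\simeq \int_CF\simeq \int_C|U|_C$, and the induced map is by construction the composite of these equivalences.

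There is no real obstacle: the content is purely a compilation of the previously established identification $\int_C-\cong C\slice-$ and the density of the Yoneda embedding. The only thing to be careful about is checking that the cone $I\to \P(C)\slice|U|_C$ produced abstractly from the classifying presheaf agrees, under the Grothendieck equivalence, with the Yoneda-induced cone $C\slice F\to\P(C)\slice F$ --- but this follows from naturality of $\int_C-$ in the presheaf argument.
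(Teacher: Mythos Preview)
Your proof is correct, but it takes a slightly different route from the paper. The paper's one-line argument invokes criterion~\ref{shape/criterion/universal} of Proposition~\ref{shape/criterion}: the defining cone $e\colon I\to\int_C|U|_C$ induces equivalences $\Fun_C(\int_C|U|_C,E)\simeq\Fun_C(I,E)$ for every right fibration $E\to C$; since $I$ is itself a right fibration and $\RFib(C)$ is a full subcategory of $\Cat\slice C$, the Yoneda lemma in $\RFib(C)$ forces $e$ to be an equivalence. You instead use criterion~\ref{shape/criterion/formula}: you first identify $I\simeq C\slice F$ via the Grothendieck equivalence, then invoke Yoneda density (Corollary~\ref{yoneda/is-dense}) to recognise $C\slice F\to\P(C)\slice F$ as a colimit cone, whence $F\simeq|U|_C$. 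Both arguments are equally short; the paper's is marginally cleaner in that it avoids the bookkeeping you flag at the end about matching the abstract cone with the Yoneda-induced one, while yours has the minor advantage of explicitly computing $|U|_C$ rather than characterising the map by a universal property.
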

\begin{proof}

  Immediate from Proposition \ref{shape/criterion}-\ref{shape/criterion/universal}. \qedhere
  
\end{proof}

\begin{proposition}[Canonical enlargement] \label{diagram/canonical}

  The canonical functor $I\rightarrow C\slice|U|_C$ from a diagram to the integral of its classifying presheaf is cofinal.
  
\end{proposition}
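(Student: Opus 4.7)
The plan is to apply the Joyal-Lurie characterisation of cofinality \pathcite{htt/cofinal/definition}: $h : I \to C\slice|U|_C$ is cofinal if and only if, for every right fibration $E \to C\slice|U|_C$, precomposition with $h$ induces an equivalence
\[
  \Fun_{C\slice|U|_C}(C\slice|U|_C,\, E) \;\tilde\rightarrow\; \Fun_{C\slice|U|_C}(I,\, E).
\]
Our only tool is Proposition \ref{shape/criterion}, so the task is to reduce the preceding equivalence to that of functor spaces over $C$, to which Prop.~\ref{shape/criterion}-\ref{shape/criterion/universal} directly applies.

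First I would parameterise right fibrations over the target. By Proposition \ref{fibration/facts}-\ref{fibration/facts/slice-category}, a right fibration $E \to C\slice|U|_C$ is the same datum as a right fibration $E \to C$ together with a map to $C\slice|U|_C$; by the Grothendieck correspondence \eqref{fibration/integral} and the identification \eqref{slice/relative}, this in turn amounts to a morphism of presheaves $F \to |U|_C$, with $E = C\slice F$. Thus it is enough to check the equivalence above for each $F \in \P(C)\slice|U|_C$ with $E = C\slice F$.

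Next, for any $X \to C\slice|U|_C$, the relative functor space factors as the homotopy fibre
\[
  \Fun_{C\slice|U|_C}(X,\, C\slice F) \;\simeq\; \fib\bigl( \Fun_C(X,\, C\slice F) \to \Fun_C(X,\, C\slice|U|_C) \bigr)
\]
over the structure map $X \to C\slice|U|_C$. Applying Proposition \ref{shape/criterion}-\ref{shape/criterion/universal} to the hypothesis that $U$ exhibits $|U|_C$ as a classifying presheaf, we obtain equivalences of the total spaces $\Fun_C(C\slice|U|_C, -) \tilde\rightarrow \Fun_C(I, -)$ both for $C\slice F$ and for $C\slice|U|_C$. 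Taking homotopy fibres of the induced map of fibre sequences gives the desired equivalence.

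The only delicate point --- and what I expect to be the main bookkeeping obstacle --- is to verify that the fibres on the two sides are taken over matching basepoints. This is immediate from the very definition of $h$: under the equivalence $\Fun_C(C\slice|U|_C,\, C\slice|U|_C) \tilde\rightarrow \Fun_C(I,\, C\slice|U|_C)$, the identity on $C\slice|U|_C$ is, by construction of the canonical enlargement, sent to $h$. Hence the basepoints for the fibre computations are compatible, and the equivalence of fibres follows.
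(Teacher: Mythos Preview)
Your proposal is correct and follows essentially the same approach as the paper's proof: both use the right-fibration criterion for cofinality, express the relative functor spaces $\Fun_{C\slice|U|_C}(-,E)$ as fibres of the absolute functor spaces $\Fun_C(-,E)$ over the appropriate basepoints, and then apply the universal property of $|U|_C$ from Proposition~\ref{shape/criterion}-\ref{shape/criterion/universal} to conclude. The only cosmetic difference is that the paper works directly with an arbitrary right fibration $p:E\to C\slice|U|_C$ (noting that $\source\circ p$ is again a right fibration over $C$), whereas you add the intermediate step of identifying $E$ with $C\slice F$ via the Grothendieck correspondence --- this is harmless but unnecessary, since Proposition~\ref{shape/criterion}-\ref{shape/criterion/universal} is already stated for arbitrary right fibrations over $C$.
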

\begin{proof}

  Let $p:E\rightarrow C\slice|U|_C$ be a right fibration.
  We must show that restriction along $p$ induces an equivalence on the space of sections of any right fibration over $C\slice |U|_C$ \pathcite{htt/cofinal/definition}.
  Now, $\source\circ p:E\rightarrow C$ is a right fibration, and there is a commuting diagram
  \[
    \begin{tikzcd}
      \Fun_{\int_C|U|_C} \left(\int_C|U|_C, E\right) \ar[d] \ar[r] & 
        \Fun_C \left(\int_C|U|_C, E \right) \ar[d, equals] \ar[r] & 
        \Fun_C \left(\int_C|U|_C, \int_C|U|_C \right) \ar[d, equals] \\
      \Fun_{\int_C|U|_C}(I, E) \ar[r] &
        \Fun_C(I, E) \ar[r] &
        \Fun_C \left(I,\int_C|U|_C \right)
    \end{tikzcd}
  \]
  where:
  \begin{itemize}
      
    \item
      The second and third vertical arrows are invertible by the universal property of $|U|_C$;
    \item
      The left upper, resp.~lower horizontal map are the fibres over $\id_{\int_C|U|_C}$, resp.~$e$, of the right upper, resp.~lower horizontal maps;
  \end{itemize}
  Thus the first vertical arrow is invertible, which is what we had to prove.
  \qedhere
  
\end{proof}

\begin{corollary}[Presheaves as a localisation of diagrams] \label{diagram/presents-presheaves}

  The inclusion $\int_C:\P(C)\tilde\rightarrow \RFib(C)\subseteq(\Cat\slice C)$ admits a left adjoint
  \[
    |-|_C:\Cat\slice C\rightarrow \P(C)
  \]
  that takes a diagram to its classifying presheaf, hence exhibiting $\P(C)$ as a Bousfield localisation of $\Cat\slice C$. 
  It is generated as a localisation by cofinal maps of diagrams, and as a Bousfield localisation by maps of diagrams whose underlying functor of index categories is $1:\Delta^0\rightarrow\Delta^1$.

\end{corollary}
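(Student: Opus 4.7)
The plan is to construct $|-|_C$ via the formula $|U|_C \defeq \colim_I(\Yoneda_C\circ U)$ computed in $\P(C)$, and then read the adjunction off from Proposition~\ref{shape/criterion}. Criterion~\ref{shape/criterion/formula} identifies this colimit with a classifying presheaf of $(I,U)$, while criterion~\ref{shape/criterion/universal} applied to $E=\int_CF$ supplies a natural equivalence $\Fun_C(I,\int_CF)\simeq\Fun_C(\int_C|U|_C,\int_CF)\simeq\Map_{\P(C)}(|U|_C,F)$, the last step by fully faithfulness of $\int_C$ (\ref{fibration/integral}). Since the right adjoint is fully faithful, this adjunction automatically exhibits $\P(C)$ as a Bousfield localisation of $\Cat\slice C$.

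For generation as a localisation by cofinal maps, I would invoke Proposition~\ref{diagram/canonical}: the unit of the adjunction at $(I,U)$ is the canonical enlargement $I\rightarrow\int_C|U|_C$, which is cofinal. A reflective localisation is generated as a strongly saturated class by its unit maps; conversely, every cofinal map in $\Cat\slice C$ is a $|-|_C$-equivalence because cofinality preserves colimits in the cocomplete target $\P(C)$. Hence the strongly saturated classes generated by unit maps and by cofinal maps agree, and both equal the class of $|-|_C$-equivalences.

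For Bousfield generation by the set $S$ of maps of diagrams whose underlying index functor is $1:\Delta^0\rightarrow\Delta^1$, the plan is to match the $S$-local objects with the right fibrations, which by \eqref{fibration/integral} are the local objects of $|-|_C\dashv\int_C$. One direction is direct: each element of $S$ is cofinal, since $1$ is the inclusion of the final object of $\Delta^1$, and hence a $|-|_C$-equivalence by the previous paragraph, so every right fibration is $S$-local. For the converse, $S$-locality of $p:E\rightarrow C$ unwinds to the assertion that for each $V:\Delta^1\rightarrow C$ with target $y$ and each $\tilde y\in E$ over $y$, the space of arrows of $E$ projecting to $V$ with target $\tilde y$ is contractible; equivalently, the source-target map $E^{\Delta^1}\rightarrow E\times_CC^{\Delta^1}$ is an equivalence.

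I expect the main friction to lie in bridging this source-target condition with the slice criterion of Proposition~\ref{fibration/facts}-\ref{fibration/facts/slice-criterion}. My plan is to obtain the slice comparison $E\slice\tilde y\rightarrow C\slice y$ as the pullback of the source-target map along the map $C\slice y\rightarrow E\times_CC^{\Delta^1}$ sending $V\mapsto(\tilde y, V)$ (well-defined as $p\tilde y=y$), from which it inherits the property of being an equivalence; varying $\tilde y$ then yields the slice criterion and hence that $p$ is a right fibration.
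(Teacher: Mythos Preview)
Your approach is correct and essentially identical to the paper's: both read the adjunction off Proposition~\ref{shape/criterion} (the paper additionally cites Cisinski's slice adjoint criterion), both invoke Proposition~\ref{diagram/canonical} to see that the unit maps are cofinal and hence generate the localisation, and for the Bousfield-generation claim both identify the lifting spaces arising from $S$-locality with fibres of the slice comparison $E\slice\tilde y\rightarrow C\slice y$ and appeal to Proposition~\ref{fibration/facts}-\ref{fibration/facts/slice-criterion}.

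One subtle point that you (and, in fairness, the paper) leave implicit: $S$-locality in $\Cat\slice C$ is a condition on mapping \emph{spaces}, so what it directly yields is that each homotopy fibre of $E\slice\tilde y\rightarrow C\slice y$ over a point is contractible, i.e.\ that $\iota(E\slice\tilde y)\rightarrow\iota(C\slice y)$ is an equivalence of groupoids. Your ``equivalently, the source--target map is an equivalence'' silently upgrades this to an equivalence of \emph{categories}, which is what the pullback step and the slice criterion require. The upgrade is easy but not automatic: essential surjectivity is immediate, and full faithfulness follows from the same hypothesis applied at the sources of arrows, using the slice-of-slice identification $(E\slice\tilde y)\slice\alpha\simeq E\slice(\source\alpha)$ from \eqref{slice/properties} together with the fact that the postcomposition maps $E\slice(\source\alpha)\rightarrow E\slice\tilde y$ are right fibrations (Proposition~\ref{fibration/facts}-\ref{fibration/facts/left-division}), so their categorical and groupoidal fibres agree.
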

\begin{proof}

  By criterion \eqref{shape/criterion/universal} in the definition of classfiying functor and \pathcite{cis/adjoint/criterion/slice}.
  By Proposition \ref{diagram/canonical}, the unit of the adjunction is cofinal.
  Therefore, by \cite[Lemma 2.2]{macpherson2021locally}, the localisation is generated by cofinal maps.
  
  For the final statement, it suffices to check the local objects. 
  That is, we should show that any diagram that inverts $0:\Delta^0\rightarrow\Delta^1$ is equivalent to a right fibration.
  Let $U:I\rightarrow C$ be such a diagram, $i:I$.
  Then for any $\sigma:c\rightarrow U_i$, the space of fillings
  \[
    \begin{tikzcd}
      \Delta^0 \ar[d, "1"] \ar[r, "i"] & I \ar[d, "U"] \\
      \Delta^1 \ar[r, "{[\sigma:c\rightarrow U_i]}"'] \ar[ur, dotted] & C
    \end{tikzcd}
  \]
  is contractible.
  But this is precisely the fibre over $\sigma$ of the map of right fibrations $I\slice i\rightarrow C\slice U_i$, since this is a fibre of
  \[
    \begin{tikzcd}
      I\slice i \ar[d] \ar[r] \ar[dr, phantom, "\lrcorner" near start] 
      & I^{\Delta^1} \ar[r, "\target"] \ar[d] & I \ar[d, equals] \\
      C\slice U_i \ar[r] & C \downarrow I \ar[r, "\target"] & I \\
    \end{tikzcd}
  \]
  over $i$.
  \qedhere
  
\end{proof}

The following reformulation is based on a comparison with \cite[\#2.1.1]{maltsiniotis2005theorie}.

\begin{lemma}[Classifying presheaves and colimits] \label{shape/quillen-a}

  Let $I\stackrel{h}{\rightarrow} J \stackrel{V}{\rightarrow} C$ be a map of diagrams.
  Then $\colim_{j:J}|hV\slice j| \cong |hV|_C$.
  
\end{lemma}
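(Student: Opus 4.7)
My plan is to evaluate both sides of the claimed equivalence at an arbitrary object $c:C$, thereby reducing the claim to cofinality of an explicit projection. Writing $W\defeq Vh:I\to C$, Definition~\ref{shape/definition} together with the pointwise computation of colimits in $\P(C)$ gives
\[
|W|_C(c)\simeq\colim_{i:I}\Map_C(c,W(i)) \quad\text{and}\quad |W\slice j|_C(c)\simeq\colim_{(i,\beta):I\slice j}\Map_C(c,W(i)),
\]
where the relative slice $I\slice j = I\times_J J\slice j$ of (\ref{slice/relative}) projects to $I$ so that its composite with $W$ recovers $W\slice j$. The claim thus reduces to a natural (in $c$) equivalence of spaces
\[
\colim_{j:J}\colim_{(i,\beta):I\slice j}\Map_C(c,W(i))\;\simeq\;\colim_{i:I}\Map_C(c,W(i)).
\]

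I would then collapse the iterated colimit on the left using the Fubini formula for colimits in $\Space$ indexed by a coCartesian fibration. Unstraightening the covariant functor $J\to\Cat$, $j\mapsto I\slice j$, yields a total category identifiable with the pullback $T\defeq I\times_J J^{\Delta^1}$ along $h:I\to J$ and $\source:J^{\Delta^1}\to J$; its objects are triples $(i,j,\beta:h(i)\to j)$, and the projection to $I$ is $\pi:(i,j,\beta)\mapsto i$. Since the integrand $\Map_C(c,W(i))$ factors through $\pi$, after Fubini the remaining task is to show that $\pi$ is cofinal.

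The substantive step is to exhibit a left adjoint $\sigma:I\to T$ given by $i\mapsto(i,h(i),\id_{h(i)})$. Because $\pi\sigma=\id_I$, the unit may be taken to be the identity. The counit $\epsilon:\sigma\pi\Rightarrow\id_T$ evaluated at $(i,j,\beta)$ is the morphism of $T$ whose $I$-component is $\id_i$ and whose $J^{\Delta^1}$-component is the commuting square relating $\id_{h(i)}$ to $\beta$, with $\beta$ on the diagonal; the triangle identities are routine. Any functor admitting a left adjoint is cofinal, since for each $i:I$ the adjunction identifies $(\sigma(i),\eta_i)$ as an initial object of the comma category $i\downarrow\pi$, which is therefore weakly contractible. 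I do not anticipate a serious obstacle; the only care needed is to use the models for $J^{\Delta^1}$ and the pullback consistently with the rest of the paper, which by (\ref{model/cat}) and (\ref{slice/relative}) is automatic.
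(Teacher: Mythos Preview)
Your argument is correct, but it takes a much longer route than the paper's. The paper observes in a single line that the colimit of $\Yoneda_C\circ W:I\to\P(C)$ (with $W=Vh$) can be computed by first left Kan extending along $h:I\to J$ and then taking the colimit over $J$; since the value of $h_!(\Yoneda_C\circ W)$ at $j$ is by definition $\colim_{I\slice j}(\Yoneda_C\circ W)=|W\slice j|_C$, the claim follows immediately. The only ingredient is the composition law for left Kan extensions already recorded at \eqref{kan-extension/base-change}.

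Your approach instead unwinds this composition law by hand: after evaluating at $c$ you identify the Grothendieck construction of $j\mapsto I\slice j$ as $I\times_J J^{\Delta^1}$, invoke Fubini for colimits over a coCartesian fibration, and verify cofinality of the projection $\pi:T\to I$ by constructing an explicit left adjoint. This is valid and self-contained, but it amounts to re-deriving a special instance of the transitivity of Kan extensions rather than citing it. The paper's approach buys brevity and stays at the level of $\P(C)$; yours makes the underlying combinatorics of indexing categories visible.
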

\begin{proof}

  The colimit of $U$ can be computed first by left Kan extension along $h$. \qedhere
  
\end{proof}

\begin{corollary}[Indexed fundamental localiser] \label{diagram/localizer}

  Let $W_{C}\subseteq \Cat\slice C^{\Delta^1}$ be the set of arrows sent to equivalences by $|-|_C$. Then:
  \begin{enumerate}
    \item 
      $W_{C}$ is strongly saturated.
    
    \item 
      If $I$ has a final object $\omega$, then 
      \[
        [\omega:\point\rightarrow I\stackrel{U}{\rightarrow} C] \in W_C
      \]
      for all $U:\Fun(I,C)$.
      
    \item 
      If $I\stackrel{h}{\rightarrow} J\rightarrow K\rightarrow C$ is such that
      \[
        [h\slice k:I\slice k\rightarrow J\slice k\rightarrow C]\in W_C
      \]
      for all $k:K$, then $h\in W_C$.
  \end{enumerate}
  Moreover, $W_C$ is the minimal strongly saturated class containing cofinal maps of diagrams.
  
\end{corollary}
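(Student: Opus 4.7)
The plan is to extract (1), (2), and the minimality clause directly from Corollary \ref{diagram/presents-presheaves}, which exhibits $|-|_C$ as a Bousfield localisation of $\Cat\slice C$ generated by cofinal maps, while (3) will be a short computation on top of Lemma \ref{shape/quillen-a}.

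For (1), the class of morphisms inverted by any left adjoint localisation is automatically strongly saturated, so nothing needs to be shown. For (2), the inclusion $\omega:\point\rightarrow I$ of a final object is classically cofinal, and pasting with any diagram $U:I\rightarrow C$ turns $\omega$ into a cofinal map in $\Cat\slice C$, hence a member of $W_C$ by the last sentence of Corollary \ref{diagram/presents-presheaves}.

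The substance lies in (3). Applying Lemma \ref{shape/quillen-a} separately to the two composites $I\rightarrow K\rightarrow C$ and $J\rightarrow K\rightarrow C$ produced by the given factorisation yields
\[
  |I\rightarrow C|_C \,\cong\, \colim_{k:K}|I\slice k\rightarrow C|_C, \qquad
  |J\rightarrow C|_C \,\cong\, \colim_{k:K}|J\slice k\rightarrow C|_C.
\]
The map of diagrams $h$ restricts termwise in $k$ to $h\slice k$, which by hypothesis is sent to an equivalence by $|-|_C$. So the induced map on colimits is an equivalence, and this map is $|h|_C$, placing $h\in W_C$. The step I would spend most care on -- and the main obstacle I foresee -- is verifying that the induced map on these colimits really coincides with $|h|_C$; this is a naturality statement for the decomposition of Lemma \ref{shape/quillen-a} with respect to maps of diagrams over $K$, best handled by re-running its proof (which proceeds through computing the colimit as a left Kan extension along the map to $K$) but with $h$ included.

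Finally, minimality is immediate: the claim in Corollary \ref{diagram/presents-presheaves} that $W_C$ is \emph{generated}, as a strongly saturated class, by cofinal maps is by definition the assertion that $W_C$ is the smallest strongly saturated class containing the cofinal maps, so no additional argument is needed.
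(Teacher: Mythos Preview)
Your proposal is correct and matches the paper's own proof essentially line for line: the paper declares (1) and (2) ``obvious'', derives (3) from Lemma~\ref{shape/quillen-a}, and deduces minimality from Corollary~\ref{diagram/presents-presheaves}. Your care about the naturality of the decomposition in (3) is well placed but, as you say, is handled by the same left-Kan-extension argument underlying Lemma~\ref{shape/quillen-a}; the paper does not spell this out either.
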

\begin{proof}

  The first two properties are obvious, the third a consequence of Lemma \ref{shape/quillen-a}.
  Minimality is a reformulation of Corollary \ref{diagram/presents-presheaves}.\qedhere
  
\end{proof}

\begin{para}[Classifying presheaf as a left Kan extension] \label{shape/kan-extension}

  Let $h:C\rightarrow D$ be a functor. Taking left adjoints to the commutative square
  \[
    \begin{tikzcd}
      \P(C) \ar[r, "\int_C"] & \Cat\slice C \\
      \P(D) \ar[u] \ar[r, "\int_D"] & \Cat\slice D \ar[u] 
    \end{tikzcd}
  \]
  yields an identification $h_!|U|_C \cong |hU|_D$ for any diagram $U:I\rightarrow C$. (Here we use that $h\circ-:\Cat\slice C\rightarrow\Cat\slice D$ is left adjoint to pullback along $h$.)

  In particular, taking $(U\sep I)=(\id_C\sep C)$, we obtain
  \[
    |h|_D \cong h_!|\id_C|_C = h_!\underline{\point}_C
  \]
  that is, the shape of the diagram $h$ is the left Kan extension along $h$ of the point presheaf on $C$.
  This formula parallels the notion of \emph{shape} used in higher topos theory.

\end{para}

\begin{remark}[Functoriality with respect to lax maps of diagrams]

  From its definition as a colimit it is clear that one obtains a map of classifying presheaves also from a \emph{lax} map of diagrams \eqref{diagram/lax-map}.
  This can be made fully functorial (i.e.~coherently compatible with composition) as follows.
  Recall from \cite[\S2.8]{macpherson2020bivariant} that there is a \emph{universal evaluation map}
  \[
    \int_{I:\Cat}\left(\P(C)\otimes \P(I^\op)\right) 
    \times_\Cat \int_{I:\Cat} \P(I) \rightarrow \P(C)
  \]
  which composed with the final section $\point_-:\Cat\rightarrow\int_{I:\Cat}\P(I)$ yields
  \[
    \int_{I:\Cat}C^I \rightarrow \P(C).
  \]
  This functor extends the one constructed in \ref{diagram/presents-presheaves} along the inclusion $\Cat\slice C\subseteq \int_{I:\Cat}C^I$.
  We don't need to use this construction in this paper.

\end{remark}

\begin{remark}[Thomason model structures on diagrams]

  By \cite{macpherson2021locally}, the localisation $|-|_C:\Cat\slice C\rightarrow \P(C)$ admits a `Thomason-like' left Bousfield localisation model structure \cite[Ex.~2.12]{mazelgee2015modela} in which all morphisms are cofibrations.
  This is precisely the image in $\Cat\slice C$ of the contravariant model structure of \cite[\S2.1.4]{HTT},
  and the Quillen adjunction between $(s\Set\slice S\sep \mathbf{Joyal})$ and $(s\Set\slice S\sep \mathrm{cov})$ exhibited in \cite[\S3.1.5]{HTT}.
  Any map $h:(I\sep U)\rightarrow (J\sep V)$ obtained as a pullback
  \[
    \begin{tikzcd}
      I \ar[r] \ar[d] & C\slice |U|_C \ar[d] \\
      J \ar[r] & C\slice |V|_C
    \end{tikzcd}
  \]
  is a fibration for this structure.
  Unlike the case $C=\point$, it does not follow that this pullback is preserved by $|-|_C$ --- the localisation is not locally Cartesian.
 
\end{remark}

\begin{remark}[Models for the classifying presheaf]

  For any quasicategory $S$, the diagram
  \[
    \begin{tikzcd}
      \RFib(S) \ar[r, "\id_{s\Set\slice S}"] \ar[d, "L_\mathrm{contravariant}"] & q\Cat\slice S \ar[d, "L_\mathbf{Joyal}"] \\
      \P(S) \ar[r] & \Cat\slice S
    \end{tikzcd}
  \]
  is commutative, where the upper right arrow is the right Quillen functor $(s\Set\slice S\sep \mathrm{cov})\rightarrow (s\Set\slice S\sep \mathbf{Joyal})$ and the vertical arrows are the associated localisation functors \cite{hinich2015dwyerkan}.
  Since Quillen adjunctions of models induce adjunctions of the localisations \cite{mazelgee2015quillen}, taking left adjoints yields a corresponding square
  \[
    \begin{tikzcd}
      \RFib(S) \ar[d, "L_\mathrm{contravariant}"] & q\Cat\slice S \ar[d, "L_\mathbf{Joyal}"] \ar[l, "Q"] \\
      \P(S) & \Cat\slice S \ar[l, "|-|_C"]
    \end{tikzcd}
  \]
  where $Q$ is a fibrant replacement functor for the contravariant model structure.
  
\end{remark}

%%%%%%%%%%%%%%%%%%%%%%%%%%%%%%%%%%%%%%%%%%%%%%%%%%%%%%%%%%%%%%%%%%%%%%%%%%%%%%

\subsection{Diagrams and pullback} \label{lax-pullback}

It is not generally true that the naturality square \eqref{shape/kan-extension} is Beck-Chevalley, that is,
\[
  \begin{tikzcd}
    \P(C) \ar[d] \ar[dr, phantom, "!" description] & 
    \P(D) \ar[d] \ar[l] \\
    \Cat\slice C & \Cat\slice D \ar[l, "-\times_DC"]
  \end{tikzcd}
\]
is \emph{not} commutative. 
In this subsection, we will show how replacing pullback with `lax' pullback amends this deficiency.
We take this opportunity to discuss the related notion of \emph{rectification} of diagrams.

\begin{para}[Comma categories]
\label{comma}

  Let $f:C\rightarrow D$ be a functor. We define $D\downarrow C$ by the fibre product
  \[
    \begin{tikzcd}
      D\downarrow C \ar[r] \ar[d] \ar[dr, phantom, "\lrcorner" near start]
        & D^{\Delta^1} \ar[d, "\target"] \\
      C \ar[r] & D.
    \end{tikzcd}
  \]
  It can also be written $D\downarrow f$.
  By \pathcite{htt/comma/source-map-is-cartesian}, the projection $\source:D\downarrow C\rightarrow D$ is a Cartesian fibration.
  (As discussed above \eqref{slice/joyal-lurie}, this pullback may be computed strictly in the category of simplicial sets.)

  On the other hand, taking $C=\{d\}$ recovers the definition of $D\slice d$ from \ref{slice/definition}.
  Similarly, $C\downarrow D$ is defined by replacing target with source projection in the defining pullback square.
  
  More generally, if $U:I\rightarrow D$ is a diagram, then we write
  \[
    f\downarrow U\defeq C\downarrow_DI \defeq (C\downarrow D)\times_DI 
    = C\times_DD^{\Delta^1}\times_DI.
  \]
  The functor $\mathrm{pr}_C:f\downarrow U\rightarrow C$ is called the \emph{lax pullback diagram} of $(I\sep U)$.
  It has a universal property with respect to lax squares
  \[
    \begin{tikzcd}
      \cdot \ar[d] \ar[r] & C \ar[d, "f"] \\
      I \ar[r, "U"'] & D \ar[ul, "\Downarrow" marking, phantom]
    \end{tikzcd}
  \]
  which can be deduced directly from its construction as a fibre product.\footnote{Compare \cite[Def.~3.3.15]{riehl20152}.}

\end{para}

\begin{para}[Lax pullback cone]
\label{lax-pullback}

  Let $U_\omega:C$ be an object, $U:I \rightarrow C\slice U_\omega$ be a cone with apex $U_\omega$, $f:X\rightarrow U_\omega$ a map in $C$.
  Considering $f$ as a functor $f:C\slice X\rightarrow C\slice U_\omega$ and $U$ we obtain a \emph{lax pullback cone}
  \[
    f\downarrow U  \defeq (C\slice X)\downarrow_{C\slice U_\omega}I \stackrel{\source}{\longrightarrow} C\slice X.
  \]
  
\end{para}

\begin{proposition}[Pullback and classifying presheaves] \label{shape/pullback}

  Let $f:C\rightarrow D$ be a functor, $(I\sep U)$ a diagram in $D$.
  \begin{enumerate}
    \item \label{shape/pullback/lax}
      We have $|f\downarrow U|_C \cong f^\dagger|U|_D$.
      
    \item \label{shape/pullback/right-fibration}
      If $f$ is a right fibration, then $|U\times_DC|_C\cong |U|_D\circ f$ in $\P(C)$.
      
    \item \label{shape/pullback/internal}
      If $f:X\rightarrow Y$ is a morphism in $D$ and $U:I\rightarrow D\slice U_\omega$ is a conical diagram, then $|f\downarrow U|_D\cong X\times_Y|U|_D$ in $\P(D)$.
      
  \end{enumerate}

\end{proposition}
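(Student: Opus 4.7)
The plan is to prove (1) by Beck--Chevalley for the comma square and then deduce (2) and (3) as corollaries. Using $|M|_C = \source_!\underline{\point}_M$ (\ref{shape/kan-extension}) together with $\underline{\point}_{f\downarrow U}=\target^*\underline{\point}_I$, part (1) reduces to the equivalence
\[
\source_!\target^*\underline{\point}_I \cong f^*U_!\underline{\point}_I
\]
in $\P(C)$, i.e., to Beck--Chevalley for the comma square $(f\downarrow U;\source,\target)$. I would establish this via the strict pullback factorisation $f\downarrow U = C\times_D(D\downarrow U)$: invoke the base change of \ref{kan-extension/base-change} along the Cartesian fibration $\source:D\downarrow U\to D$ (the pullback of the arrow-category source projection of \ref{comma}) to get $f^*|D\downarrow U|_D \cong |f\downarrow U|_C$, and then identify $|D\downarrow U|_D\cong|U|_D$ via the section $s:I\to D\downarrow U$, $i\mapsto(U_i,\id_{U_i},i)$. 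Cofinality of $s$ follows because a morphism $(d,\delta:d\to U_i,i)\to s(i')=(U_{i'},\id,i')$ in $D\downarrow U$ is determined by $\mu:i\to i'$ alone (with arrow component forced to $U\mu\circ\delta$), so the coslice $(d,\delta,i)\backslash_s I$ is equivalent to $i\backslash I$ and hence contractible by its initial object $\id_i$.

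The main obstacle is confirming that this base change step applies exactly to $\source:D\downarrow U\to D$: \ref{kan-extension/base-change} explicitly calls out smoothness of right fibrations, but here $\source$ is Cartesian with generally non-groupoid fibres, so I would need to verify that Cisinski's proper base change for Cartesian fibrations is available in the form required. If not, the fallback is to read $\target:f\downarrow U\to I$ as the coCartesian fibration associated to the diagram $\Psi:I\to\Cat\slice C$, $i\mapsto C\slice U_i = \int_C f^\dagger U_i$, and apply the colimit-preserving localisation $|-|_C$ to the Grothendieck integral (lax colimit of right fibrations agrees with the strict colimit after passage to classifying presheaves), computing $|f\downarrow U|_C = \colim_I|C\slice U_i|_C = \colim_I f^\dagger U_i = f^*\colim_I h_{U_i} = f^\dagger|U|_D$ using that $f^*=f^\dagger$ preserves colimits.

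Part (2) follows from smooth base change along the right fibration $f$ (\ref{kan-extension/base-change}) applied to the strict pullback square with corners $U\times_DC, I, C, D$: $f^*U_!\underline{\point}_I\cong(p_C)_!\underline{\point}_{U\times_DC}=|U\times_DC|_C$, which is exactly $|U\times_DC|_C\cong f^\dagger|U|_D=|U|_D\circ f$. Part (3) is part (1) applied inside the slice $D\slice Y$ (with $Y=U_\omega$) to the postcomposition functor $f:D\slice X\to D\slice Y$ and the diagram $U:I\to D\slice Y$: this gives $|f\downarrow U|_{D\slice X}\cong f^\dagger|U|_{D\slice Y}$ in $\P(D\slice X)$. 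Transferring through the equivalence $\P(D\slice Z)\simeq\P(D)\slice Z$ of Corollary \ref{slice/presheaf} (for $Z\in\{X,Y\}$), under which $f^\dagger$ corresponds to pullback $X\times_Y(-)$ and $|U|_{D\slice Y}$ corresponds to the structural map $|U|_D\to Y$ induced by the cone structure of $U$, this rewrites as $|f\downarrow U|_D\cong X\times_Y|U|_D$ in $\P(D)$.
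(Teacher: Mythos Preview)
Your proposal is correct. The fallback you give for (1)---reading $\target:f\downarrow U\to I$ as a coCartesian fibration with fibres $C\slice U_i$ and computing the classifying presheaf fibrewise---is exactly the paper's argument, stated there in one line. Your primary route via proper base change along the Cartesian fibration $\source:D\downarrow U\to D$ also works (Cartesian fibrations are proper in Cisinski's sense, dual to the smoothness of coCartesian fibrations), and your cofinality argument for the section $s$ is clean; the paper simply avoids invoking this by going straight to the fibrewise computation. For (2) you match the paper verbatim.

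For (3) there is a genuine but minor divergence: you deduce it from (1) applied to $\tilde f:D\slice X\to D\slice Y$ and then transport through $\P(D\slice Z)\simeq\P(D)\slice Z$, whereas the paper deduces it from (2) applied to the same $\tilde f$, observing that this functor is a right fibration (by left-cancellation for right fibrations, Proposition~\ref{fibration/facts}-\ref{fibration/facts/left-division}). The paper's route is marginally shorter because it sidesteps the comma/strict-pullback distinction---once $\tilde f$ is known to be a right fibration, strict pullback already computes the correct restriction. Your route has the advantage of not needing to identify $f\downarrow U$ with a strict fibre product. Both require the same final transport step through the slice equivalence.
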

\begin{proof}

  \begin{enumerate}
  
    \item
      Target projection $C\downarrow_DI\rightarrow I$ is a co-Cartesian fibration, so the pullback may be computed fibre-by-fibre:
      \[
        |f\downarrow U| \cong \colim_{i:I}|f\downarrow(U_i)| \cong \colim_{i:I}f^\dagger U_i.
      \]

    \item
      Calculate:
      \begin{align*}
        f^*U_!1_I &\cong (U\times_DC)_!f^*1_I && \text{base change along right fibration \eqref{kan-extension/base-change}}\\
        &\cong (U\times_DC)_!1_{I\times_DC} && \text{pullback preserves }1
      \end{align*}
      
    \item
      Apply \ref{shape/pullback/right-fibration} to $D\slice X\rightarrow D\slice Y$ (which is a right fibration by Proposition \ref{fibration/facts}-\ref{fibration/facts/left-division}). \qedhere
  
  \end{enumerate}

\end{proof}

\begin{definition}[Rectification] \label{rectification/definition}

  Let $f: C\rightarrow  D$ be a dense functor and $(J\sep V)$ a diagram in $D$.
  An $f$-\emph{rectification} of $(J\sep V)$ is a lax square 
  \[
    \begin{tikzcd}
      I \ar[d, "h"] \ar[r, "U"] \ar[dr, phantom, "\Downarrow" marking] & C \ar[d, "f"] \\
      J \ar[r, "V"] & D
    \end{tikzcd}
  \]
  such that $(h\sep\psi):(I\sep fU)\dashrightarrow(J\sep V)$ is a left Kan extension.
  The diagram $(U\sep I)$ is the \emph{underlying $C$-diagram} of the rectification.
  This entails $f_!|U|_C\cong |V|_D$.
  
\end{definition}

\begin{remark}[Comma object as universal rectification]

  In the situation of Definition \ref{rectification/definition}, every diagram $(J\sep V)$ in $D$ admits a canonical $f$-rectification given by the comma object $\source:f\downarrow V\rightarrow C$.
  The universal property of comma objects means that it receives a unique map from any rectification, and it satisfies the additional property that it models $f^*|V|_D$.
  It is often useful to be able to construct rectifications `smaller' than the canonical one.

\end{remark}

%%%%%%%%%%%%%%%%%%%%%%%%%%%%%%%%%%%%%%%%%%%%%%%%%%%%%%%%%%%%%%%%%%%%%%%%%%%%%%
%%%%%%%%%%%%%%%%%%%%%%%%%%%%%%%%%%%%%%%%%%%%%%%%%%%%%%%%%%%%%%%%%%%%%%%%%%%%%%

\section{Sketches}
\label{sketch/}

In this section, we introduce our notion of sketches in $\infty$-category theory.
We discuss both sides of the localisations $\P(C)\rightleftarrows\Mod(\sk C)$ and $\Cat\slice C\rightleftarrows \Mod(\sk C)$ for a sketch $\sk C=(C\sep R)$ and functoriality for change of sketch including Beck-Chevalley conditions.
We end with some discussion of Morita equivalences of sets of diagrams, an application to aspherical diagrams, and some examples of sketches on a point.

\subsection{Conical diagrams}
\label{conical-diagram/}

In this section, we discuss sizes and operations on categories with sets of diagrams.

\begin{para}[Sets of objects of categories]
\label{sets/of-objects}
  
  The set of $\Universe$-sets of objects of a category $C$ --- that is, $\Universe$-small subsets of $\pi_0(\Object(C))$ --- is denoted $\Universe\Power(C)$.
  It is a distributive lattice which admits all meets and $\Universe$-small joins.
  If left ambiguous, assume $C\ll\Universe$.

\end{para}

\begin{para}[Conical diagrams]
\label{conical-diagram/set}

  Let $C:\Cat$ be a category. 
  A \emph{conical} $\Universe$-\emph{diagram} in $C$ is a functor $U:J\rightarrow C\slice c$, where $J$ is a $\Universe$-category, called the \emph{index category} of $(J,U)$.
  Conical diagrams in $C$ form a category 
  \[
    \Universe\Cat \coneslice C \defeq \int_{I:\Universe\Cat,c:C}\Map(I, C\slice c).
  \]
  When left ambiguous, assume $C\ll\Universe$, so that $\Cat\coneslice C$ is presentable.

  A \emph{set of conical diagrams} in $C$ is a set of objects of $\Cat\coneslice C$ (see \ref{sets/of-objects}).
  There are \emph{two} implicit universes that appear when quantifying over sets of conical diagrams: the size of the set, and the size of the (index categories of the) diagrams.
  A $\Universe$-set of $\Universe$-diagrams is said to be $\Universe$-\emph{small}.

\end{para}

\begin{para}[Transferring sets of diagrams] \label{conical-diagram/functoriality}

  A functor $f:C\rightarrow D$ induces a transformation $f_!:\Universe\Cat\coneslice C\rightarrow \Universe\Cat\coneslice D$ of Cartesian fibrations over $\Universe\Cat$, for any $\Universe$.
  If $C$ and $D$ are $\Universe$-small, the fibres of this functor are subcategories of functor categories between small categories, hence small.
  We therefore obtain pullback $f^{-1}$ and left adjoint image $f_!$ operations 
  \[
    f_!:\Power(\Cat\coneslice C) \rightleftarrows \Power(\Cat\coneslice D) : f^{-1}
  \]
  on sets of (conical) diagrams in $C$, $D$.
  
\end{para}

\begin{para}[Boundary morphisms of a conical diagram] \label{conical-diagram/boundary}

  Let $U:I\rightarrow C\slice U_\omega$ be a conical diagram in $C$.
  The \emph{coboundary morphism} of $U$ is the induced morphism
  \[
    d_F:\colim_{i:I}U_i \rightarrow U_\omega
  \]
  in $\P(C)$.
  This provides a mapping
  \[
    \Object(\Cat\coneslice C) \rightarrow \Object(\P(C)^{\Delta^1}).
  \]
  If $R$ is a set of conical diagrams in $C$, write $\bar{R}\subseteq\P(C)^{\Delta^1}$ for the image of $R$ under this mapping.
  (In the dual case of a left conical diagram, use the term \emph{boundary} morphism.)

\end{para}

\begin{para}[Slices] \label{conical-diagram/slice}

  If $R$ is a set of conical diagrams on a category $C$, $c:C$, we write $R(c)\subseteq R$ for the set of diagrams with cone point isomorphic to $c$.
  If $f:C\rightarrow D$ is a functor then we obtain maps $R(c)\rightarrow f_!R(fc)$.
  If $f$ is a right fibration and $R$ is the preimage of a set of diagrams $R'$ on $D$, then by Proposition \ref{fibration/facts}-\ref{fibration/facts/slice-criterion}, this map is a bijection:
  \[
    (f^{-1}R')(c) = R'(fc).
  \]

\end{para}

\subsection{Sketches}  \label{sketch/init/}

We introduce the basic constructions of sketches: trivial and cotrivial, canonical and subcanonical, diagram and localisation sketches, slices, extensions, and exterior (box) products.

\begin{definition}[Sketch]
  
  \label{sketch/definition}

  A \emph{colimit sketch}, or simply \emph{sketch}, is a pair $(C\sep R)$ where $C$ is a category, the category of \emph{cells} of the sketch, and $R$ is a set of conical diagrams $I\rightarrow C\slice c$, called the \emph{relations} of the sketch.
  Say also that $(C\sep R)$ is a sketch \emph{on $C$}.
  A $\Universe$-\emph{sketch} or $\Universe$-\emph{small} sketch is a $\Universe$-category with a $\Universe$-set of $\Universe$-diagrams.
  
  We usually use curly letters $\mathcal{C}$, $\mathcal{D}$, \ldots~to denote sketches.
  The category of cells of a sketch $\sk C$ is denoted $\symb(\sk C)$; for economy of notation, we will often denote it simply by the corresponding Roman character (e.g.~$C\defeq\symb(\sk C)$ in this case). The set of relations on $\sk C$ is denoted $R(\sk C)$.

  A \emph{morphism of sketches} $\sk C\rightarrow \sk D$ is a functor $\symb(\sk C)\rightarrow \symb(\sk D)$ which maps $R(\sk C)$ into $R(\sk D)$.
  We write 
  \[
    \Fun(\sk C, \sk D)\subseteq \Fun(\symb(\sk C),\symb(\sk D))
  \]
  for the full subcategory spanned by the maps of sketches.
  These subcategories are stable for composition of functors.
  
\end{definition}

\begin{definition}[Categories of sketches]
\label{sketch/size}

  A sketch $\sk C$ is said to be $\Universe$-\emph{small}, or a $\Universe$-\emph{sketch}, if $\symb(\sk C)$ is $\Universe$-small and $R(\sk C)$ is $\Universe$-small as a set of conical diagrams \ref{conical-diagram/set}.
  The category of $\Universe$-sketches is denoted
  \[
    \Universe\Sketch \defeq \int_{C:\Universe\Cat} \Universe\Power(\Universe\Cat\coneslice C).
  \]  
  We usually use the typically ambiguous symbol $\Sketch$.
  In this paper, we will mainly focus on properties of \emph{individual} sketches, and to a certain extend the lattice of sketches on a fixed category, and defer to a later work most discussion of categories of sketches.
  
\end{definition}

\begin{example}[Sketches on the empty set]

  We have $\Cat\coneslice\emptyset = \emptyset$, so there is a unique sketch $R=\emptyset$ on the empty category.

\end{example}

\begin{example}[Trivial] \label{sketch/example/trivial}

  Any category $C$ carries a tautological sketch $C_\emptyset$ with no relations.
  It is initial in the lattice $\Power(\Cat\coneslice C)$ of sketches on $C$; hence, $C\mapsto C_\emptyset$ defines a left adjoint to the forgetful functor $\Sketch \rightarrow \Cat$.
  It can be realised as the image under $\emptyset\rightarrow C$ of the unique sketch on $\emptyset$, and it is stable for image and preimage along any functor between categories of cells.

\end{example}

\begin{example}[Sketches on a point]

  Sketches on the one-object category $\point$ are sets of categories.
  See \S\ref{sketch/on-point/}.
  
\end{example}

\begin{example}[Cotrivial] \label{sketch/example/cotrivial}

  Any category $C$ carries a tautological `cotrivial' sketch $(C,\Universe\Cat\slice C)$ depending on a universe $\Universe$.
  It is stable under arbitrary pullback of functors of categories of cells; in particular, the cotrivial sketch on $C$ is a pullback of the cotrivial sketch on $\point$.
  The set of relations is not small, so it does not define a right adjoint to $\forget^R$ (but it would if we allowed large sets of small relations).
  
\end{example}

\begin{para}[Canonical] \label{sketch/canonical/definition}

  If $C$ is $\Universe$-cocomplete, write $\Universe-\mathtt{colimit}\slice C\subseteq \Universe\Cat\slice C$ for the set of all $\Universe$-small colimit cones in $C$.
  We refer to this set of diagrams as \emph{canonical} (though it depends on $\Universe$).
  Note that it is never $\Universe$-small, although it can in good cases be \emph{Morita equivalent} (Def.~\ref{sketch/morita-equivalence/definition}) to a small one.
  
  We will employ typical ambiguity with the term $\Universe-\colimit$ only in certain restricted contexts.
  One such is within the expression $\Fun(\sk C, (D\sep\mathtt{colimit}))$, which can be resolved against any $\Universe$ such that $R(\sk C)\subseteq \Universe\Cat\slice C$.
  Then, it is simply the category of functors that take the elements of $R(\sk C)$ to colimit cones in $D$; in other words, it is actually \emph{unambiguous}.
  
\end{para}  

\begin{para}[Subcanonical] \label{subcanonical/definition}

  A set of conical diagrams $R$ is said to be \emph{subcanonical} if all of its elements are colimit cones.
  Equivalently, it is a subset of $\Universe-\mathtt{colimit}\slice C$ for some $\Universe$.
  (This notion is called `realised' in \cite[40]{makkai1989accessible}.)
  
\end{para}

\begin{para}[Slice sketch]

  Let $\sk C$ be a sketch, $F:\P(C)$.
  The \emph{slice sketch} $\sk C\slice F$ is the sketch whose category of cells is $C\slice F$ and whose relations are the preimage of $R(\sk C)$.
  Since $C\slice F\rightarrow C$ is a right fibration, for any $X\rightarrow F$ the set of relations with vertex $X$ is the same in $C$ and $C\slice F$: $R_\sk C(X) = R_{\sk C/F}(X\rightarrow F)$ \eqref{conical-diagram/slice}.
  
\end{para}

\begin{para}[Extending sketches] \label{sketch/extension}

  Given a sketch $\sk C=(C\sep R)$ and a set $R'\subseteq\Cat\coneslice C$ of conical diagrams in $C$, we introduce a notation for \emph{extending $\sk C$ by $R'$}:
  \[
    \sk C\cup R' \defeq (C\sep R\cup R').
  \]

\end{para}

\begin{para}[Exterior products] \label{sketch/exterior-product}
 
  Let $C$, $D:\Cat$, and let $S\subset \Cat\coneslice C$, $T\subset \Cat\coneslice D$ be sets of diagrams.
  The \emph{exterior product} $S\boxtimes T\subset\Cat\coneslice(C\times D)$ is defined as follows:
  \[
    S\boxtimes T \defeq (S\times D)\sqcup (C\times T)
  \]
  where $S\times D\subseteq\Object((\Cat\slice C)\times D) \rightarrow \Object(\Cat\slice (C\times D))$ is defined using the co-ordinate slices
  \[
    U:I\rightarrow C\slice c\sep d \quad \mapsto \quad (U,\underline{d}):I\rightarrow (C\times D)\slice(c,d)
  \]
  and similarly for $C\times T$.
  Compare \cite[Not.~4.8.1.7]{HA}.
  This defines a binary operation on sketches with unit $(\point,\emptyset)$.

\end{para}

\begin{example}[Diagram]
\label{sketch/example/conical}

  A conical category $I^\triangleright$ carries a natural \emph{conical sketch} $I^\triangleright_{\{\id\}}$ with $R=\{\id_{I^\triangleright}\}$.
  The set of diagrams in a colimit sketch $\sk C$ is exactly the set of morphisms $(I^\triangleright\sep\{\id\})\rightarrow \sk C$ from `diagram' sketches.
  
\end{example}

\begin{example}[Localisation] \label{sketch/example/localization}

  A set of arrows $W\subseteq\Fun(\Delta^1,C)$ can be thought of as a set of conical diagrams via $(\Delta^0)^\triangleright = \Delta^1$.
  Hence the category of \emph{relative $\infty$-categories} \cite{barwick2012relative} embeds into $\Sketch$ as a full subcategory.

\end{example}

%%%%%%%%%%%%%%%%%%%%%%%%%%%%%%%%%%%%%%%%%%%%%%%%%%%%%%%%%%%%%%%%%%%%%%%%%%%%%%

\subsection{Modules over sketches}
\label{sketch/module/}

In this section we define the category of \emph{continuous presheaves}, \emph{models}, or \emph{modules} over a sketches, and describe the behaviour under with respect to trivial, cotrivial, (sub)canonical, diagram and localisation sketches, slices, extensions, and box products.

\begin{proposition}[Criteria for $R$-continuity]
\label{sketch/module/criterion}

  Let $\sk C$ be a sketch, $F:C^\op\rightarrow\Space$ a presheaf.
  The following conditions are equivalent:
  \begin{enumerate}
  
    \item \label{module/criterion/internal}
      $F$ defines a morphism of sketches $\sk C\rightarrow (\Space^\op,\mathtt{colimit})$.
      
    \item \label{module/criterion/explicit}
      For every element $U:I\rightarrow C\slice\omega $ of $R$, the boundary map
      \[
        \partial_U:F(U_{\omega}) \rightarrow \lim_{i:I}F(U_i)
      \]
      is an isomorphism.
    
    \item \label{module/criterion/local}
      $F$ is $\bar{R}$-local.

  \end{enumerate}
  Moreover, these conditions are stable under limits in $\P (C)$ and preserved by pullback along morphisms of sketches with small category of cells.
  
\end{proposition}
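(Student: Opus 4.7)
The plan is to unwind each of the three conditions into the same explicit statement about boundary maps of spaces, and then derive the stability claims formally. For (1)$\Leftrightarrow$(2), the condition (1) says by definition that $F:C^\op\to\Space$, viewed as a map of sketches, takes each relation $U:I\to C\slice U_\omega$ in $R(\sk C)$ to a colimit cone in $(\Space^\op,\mathtt{colimit})$, equivalently to a limit cone in $\Space$ with apex $F(U_\omega)$. By the universal property of limits of spaces, this amounts exactly to (2), namely that $\partial_U:F(U_\omega)\to\lim_{i:I}F(U_i)$ is an equivalence.

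For (2)$\Leftrightarrow$(3), I would use the construction of the coboundary in \ref{conical-diagram/boundary}: each element of $\bar R$ is a morphism $d_U:\colim_i\Yoneda_C(U_i)\to\Yoneda_C(U_\omega)$ in $\P(C)$. Applying $\Map_{\P(C)}(-,F)$ and combining the Yoneda lemma with the standard identification $\Map_{\P(C)}(\colim_i\Yoneda_C(U_i),F)\cong\lim_iF(U_i)$ identifies the induced map on mapping spaces with $\partial_U$. Hence $F$ being $\bar R$-local is precisely (2).

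For the \emph{moreover}: stability under limits in $\P(C)$ is immediate either from (3), since the class of objects local for any set of morphisms is closed under limits in the ambient category, or from (2), since limits in $\P(C)$ are computed pointwise and commute with the $\lim_i F(U_i)$. For preservation under pullback, given $f:\sk C\to\sk D$ a morphism of sketches with $C$ small, the precomposition functor $f^*:\P(D)\to\P(C)$ is defined, and for any $F$ satisfying (1) on $\sk D$ the composite $F\circ f:\sk C\to(\Space^\op,\mathtt{colimit})$ is manifestly again a morphism of sketches, since $f$ carries $R(\sk C)$ into $R(\sk D)$ by definition of a sketch morphism and $F$ carries the latter to colimits.

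The only real subtlety I anticipate is universe bookkeeping: $(\Space^\op,\mathtt{colimit})$ is a large sketch, so both (1) and (3) need their implicit universes chosen so that the indexing categories appearing in $R(\sk C)$, and the resulting colimits and limits of spaces, all live in a universe with respect to which $\P(C)$ is cocomplete and the coboundary construction of \ref{conical-diagram/boundary} is defined; with that in place the rest is pure definitional unwinding.
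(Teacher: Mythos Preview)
Your proof is correct and follows essentially the same approach as the paper: both unwind (1)$\Leftrightarrow$(2) directly from the definition of sketch morphisms, both deduce (2)$\Leftrightarrow$(3) via the Yoneda lemma together with the identification $\Map_{\P(C)}(\colim_i\Yoneda_C(U_i),F)\cong\lim_iF(U_i)$, and both handle the `moreover' clause by pointwise computation of limits and by composing with $f$ (the paper phrases the pullback step through criterion~(2), you through criterion~(1), but these are the same argument). Your remark on universe bookkeeping is appropriate caution but introduces no real divergence.
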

\begin{proof}
  \begin{labelitems}
  
    \item[\ref{module/criterion/internal}$\Leftrightarrow$\ref{module/criterion/explicit}] 
      Immediate from the definition of morphisms of sketches.
      
    \item[\ref{module/criterion/explicit}$\Leftrightarrow$\ref{module/criterion/local}]
      The extended Yoneda lemma \pathcite{cis/yoneda-lemma} provides an identification 
      \[
        F(U_j) \cong \Nat_{C^\op\rightarrow\Space}(\Yoneda_C (U_j),F)
      \]
      for any $j:I^\triangleright$.
      Using this to translate the continuity condition \ref{module/criterion/explicit}, we find
      \[
        \Nat_{C^\op\rightarrow\Space}(\Yoneda_C(U_\omega)\sep F) \tilde\rightarrow 
          \Nat_{C^\op\rightarrow\Space} \left(\colim_{i:I}\Yoneda_C(U_i)\sep F \right)
      \]
      which is precisely the statement of locality for $F$.
      
    \item[Limit stable]  
      Condition \ref{module/criterion/explicit}, and hence also the other criteria, is stable under limits in $\P(C)$ because these are computed pointwise \pathcite{htt/limit/of-functors}.

    \item[Pullback]
      Let $f:\sk C\rightarrow \sk D$ be a morphism of sketches and let $F:\P(D)$ satisfy \ref{module/criterion/explicit}.
      Then for any $(I\sep U):R(\sk C)$, $F$ takes $fU:I\rightarrow D$ to a limit cone, whence $f^{-1}F$ also satisfies criterion \ref{module/criterion/explicit}.
      \qedhere

  \end{labelitems}
  
\end{proof}

\begin{definition}[Modules]
\label{sketch/module/definition}

  A presheaf on $C$ is said to be $R$-\emph{continuous} if the equivalent conditions of Proposition \ref{sketch/module/criterion} are satisfied.
  Say also that $F$ is a \emph{(left) module over} or \emph{model of} the sketch $\sk C$.
  The full subcategory of $\Universe\P(C)$ spanned by the $R$-continuous presheaves is denoted $\Universe\P_R(C)$, or writing $\sk C\defeq (C\sep R)$, $\Universe\Mod(\sk C)$.
  
  A morphism of sketches $f:\sk C\rightarrow \sk D$ induces, by restriction of the pullback of presheaves, a limit-preserving \emph{restriction of cells} functor $f^*:\Mod(\sk D)\rightarrow \Mod(\sk C)$.
  
\end{definition}

\begin{remark}

  Note that our notion of model uses presheaves (left modules) whereas the classical literature on sketches usually prioritises (covariant) functors (right modules).
  Hence, our models are most closely analogous to what classically would have been called models of the opposite (limit) sketch.

\end{remark}

\begin{definition}[Morita equivalence]
\label{sketch/morita-equivalence/definition}

  A morphism $\sk C\rightarrow \sk D$ of $\Universe$-small sketches is said to be a \emph{$\Universe$-Morita equivalence} if its restriction of cells functor $\Universe\Mod(\sk D) \rightarrow \Universe\Mod(\sk C)$ is an equivalence.
  It is said to be a \emph{Morita equivalence} if it is a $\Universe$-Morita equivalence for some (hence by \cite[Prop.~5.5.7.8]{HTT}, any) $\Universe\gg \sk C,\sk D$.
  An extension $\sk C\rightarrow \sk C\cup R'$ is \emph{Morita trivial} if it is a Morita equivalence of sketches.
  
\end{definition}

\begin{example}[Trivial] \label{sketch/module/example/trivial}

  We have $\Mod(C_\emptyset)=\P(C)$ (cf.~\ref{sketch/example/trivial}).
  
\end{example}

\begin{example}[Cotrivial] \label{sketch/module/example/cotrivial}

  A module for a cotrivial sketch $(C\sep\Cat\coneslice C)$ \eqref{sketch/example/cotrivial} is a final object of $\P(C)$.
  This can be seen already from the relations
  \[
    \emptyset \rightarrow C\slice c;\quad c:C
  \]
  which imply $F(c)\cong \point$ for any $F\in\Mod(C\sep \Cat\coneslice C)$.
  This shows moreover that the extension $\{\emptyset\rightarrow C\slice c\}_{c:C} \subseteq \Cat\coneslice C$ is a Morita equivalence.

\end{example}

\begin{para}[Subcanonical, canonical] \label{sketch/module/subcanonical}

  A sketch is subcanonical \eqref{subcanonical/definition} if and only if the Yoneda embedding of its category of cells has image in $\Mod(\sk C)$.
  Hence we obtain a fully faithful Yoneda embedding $\Yoneda_\sk C:C\rightarrow \Universe\Mod(\sk C)$ for any $\Universe$.
  
  If $C$ is $\Universe$-presentable, then by \cite[Prop.~5.5.2.2]{HTT}, $\Universe$-small models for the $\Universe$-canonical sketch are exactly the representable presheaves; i.e.
  \[
    \Yoneda_{C,\Universe-\colimit}:C\rightarrow\Universe\Mod(C,\Universe-\colimit)
  \] 
  is an equivalence.\footnote{We should be careful around the loose formula $C\cong \Mod(C,\colimit)$ --- the basic rules we are using to resolve implicit universes \eqref{universe/rules} would tell us to pick a universe with respect to which $C$ is \emph{small}, not presentable, and this would make the statement incorrect.}

\end{para}

\begin{proposition}[Slicing over a local presheaf] \label{sketch/module/slice}

  Let $F:\P(C)$ be $R$-cocontinuous. Then for any universe $\Universe$, the natural equivalence $\Universe\P(C\slice F)\cong \Universe\P(C)\slice F$ restricts to an equivalence $\Universe\Mod(\sk C\slice F) \cong \Universe\Mod(\sk C)\slice F$.

\end{proposition}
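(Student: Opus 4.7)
The starting point is the equivalence $\Universe\P(C\slice F)\cong \Universe\P(C)\slice F$ of Corollary \ref{slice/presheaf}, which I would first make concrete. Under the identification $\P(-)\cong\RFib(-)$ via the Grothendieck integral, it sends a presheaf $\tilde G$ on $C\slice F$ to the object $G\rightarrow F$ of $\P(C)\slice F$ where $G$ is the presheaf classified by the composite right fibration $\int_{C\slice F}\tilde G\rightarrow C\slice F\rightarrow C$. Fibrewise, this means $\tilde G(c,\phi)$ is the fibre of $G(c)\rightarrow F(c)$ at the point $\phi:F(c)$.

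Next I would identify the relations of $\sk C\slice F$ concretely. By definition, $R(\sk C\slice F)$ is the preimage of $R(\sk C)$ along the right fibration $C\slice F\rightarrow C$, and by the fibrewise description of a right fibration \eqref{fibration/facts/slice-criterion} (applied as in \ref{conical-diagram/slice}) this means that the data of a relation in $\sk C\slice F$ is a relation $(I\sep U)\in R(\sk C)$ with apex $U_\omega$ together with a lift $\phi:U_\omega\rightarrow F$; the resulting diagram $U_\phi:I\rightarrow(C\slice F)\slice(U_\omega,\phi)\cong C\slice U_\omega$ is the unique lift of $U$.

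The heart of the argument is then a direct comparison of the continuity conditions. Using criterion \eqref{module/criterion/explicit} of Proposition \ref{sketch/module/criterion}, $R_{/F}$-continuity of $\tilde G$ at $(I\sep U_\phi)$ asks that
\[
  \tilde G(U_\omega,\phi) \;\longrightarrow\; \lim_{i:I}\tilde G(U_{\phi,i})
\]
is an equivalence. Substituting the fibrewise formula above and commuting the limit past the fibre product gives
\[
  \bigl(G(U_\omega)\to F(U_\omega)\bigr)^{-1}(\phi) \;\longrightarrow\; \bigl(\lim_I G(U_i)\to\lim_I F(U_i)\bigr)^{-1}(\phi).
\]
Because $F$ is itself $R$-continuous, $F(U_\omega)\rightarrow\lim_I F(U_i)$ is an equivalence, so letting $\phi$ range over $F(U_\omega)$ precisely produces the map $G(U_\omega)\rightarrow\lim_I G(U_i)$ on total spaces. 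Thus $\tilde G$ is $R_{/F}$-continuous at all lifts of $(I\sep U)$ iff the map of fibres over every $\phi$ is an equivalence, iff $G$ is $R$-continuous at $(I\sep U)$. Running over all relations gives the claimed restriction.

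The main subtlety is justifying the fibrewise rewriting in an $\infty$-categorical setting — i.e.~that the limit of the fibres is the fibre of the limit, and that the fibres of $F(U_\omega)\rightarrow\lim_I F(U_i)$ being contractible is equivalent to the total map being an equivalence. Both are standard facts about limits in $\Space$, but they are what makes the argument tick; everything else is bookkeeping with the equivalence of Corollary \ref{slice/presheaf} and the slice calculus for right fibrations.
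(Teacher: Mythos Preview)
Your proposal is correct and follows essentially the same approach as the paper: both arguments identify a presheaf $\tilde G$ on $C\slice F$ with the fibres of $G\rightarrow F$, note that relations of $\sk C\slice F$ are exactly lifts of relations of $\sk C$, and then use $R$-continuity of $F$ to conclude that the boundary map for $G$ is an equivalence on total spaces if and only if it is an equivalence on each fibre. The paper packages this comparison into a single commuting diagram of fibre sequences, while you spell out the fibrewise formula and the ``limit of fibres is fibre of limits'' step explicitly, but the mathematical content is identical.
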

\begin{proof}

  Let $G:\P(C)\slice F$, and write $G_F$ for the corresponding object of $\P(C\slice F)$ (using Corollary \ref{slice/presheaf}).
  Let $(I\sep \tilde U)$ be a lift of an $R(\sk C)$-diagram $(I\sep U)$ to $C\slice F$, and consider the diagram
  \[
    \begin{tikzcd}
      G_F(\tilde U_\omega) \ar[r] \ar[d] & G(U_\omega) \ar[r] \ar[d] & 
      F(U_\omega) \ar[d, equals] \\
      \lim_{i:I}G_F(\tilde U_i) \ar[r] & \lim_{i:I}G(U_i) \ar[r] &
      \lim_{i:I}F(U_i)
    \end{tikzcd}
  \]
  where the third vertical arrow is invertible by cocontinuity of $F$.
  The left horizontal arrows are the fibres of the right horizontal arrows over the structure morphism of $\tilde U|_{\omega}$, $\tilde U|_I$, respectively.
  Hence, the middle vertical arrow is invertible if and only if the left vertical arrow is.
  In other words, $G_F$ is $R(\sk C\slice F)$-local if and only if $G$ is $R(\sk C)$-local.
  \qedhere
  
\end{proof}

\begin{para}[Extensions]

  If $j:\sk C\rightarrow \sk C\cup R'$ is an extension of sketches, then of course restriction of cells $j^*:\Mod(\sk C\cup R') \rightarrow \Mod(\sk C)$ is fully faithful --- it is the inclusion of the $R'$-continuous presheaves.
  
\end{para}

\begin{para}[Products and modules] \label{sketch/module/product}

  Modules over an external product $\sk C\boxtimes\sk D$ of sketches (\ref{sketch/exterior-product}) are presheaves on $C\times D$ whose restriction to each co-ordinate slice is a module over $\sk C$ or $\sk D$, i.e.~that take relations in either variable to limits.
  It follows that box products of sketches \emph{preserves Morita equivalence}, i.e.~if $\sk C\rightarrow \sk C'$ is a Morita equivalence, then so is $\sk C\boxtimes\sk D\rightarrow\sk C'\boxtimes\sk D$.
  
\end{para}

\begin{example}[Diagram]
\label{sketch/module/example/conical}

  Models of the diagram sketch $(I^\triangleright\sep \{\id_{I^\triangleright}\})$ are simply $I^\op$-indexed limit cones in $\Space$.
  Since every diagram admits a limit cone in $\Space$, the inclusion $I_\emptyset \subset (I^\triangleright)_{\{\id\}}$ is a Morita trivial extension.

\end{example}

\begin{example}[Localisation] \label{sketch/module/example/localization}

  Modules over a localisation sketch \eqref{sketch/example/localization} arising from a relative category $(C\sep W)$ are $W$-local presheaves.

\end{example}

%%%%%%%%%%%%%%%%%%%%%%%%%%%%%%%%%%%%%%%%%%%%%%%%%%%%%%%%%%%%%%%%%%%%%%%%%%%%%%

\subsection{Modules as a localisation of presheaves}

The main point of introducing sketches is as presentations of certain localisations of presheaf categories.
Here, we describe these localisations in some detail.

\begin{proposition}[Modules as a Bousfield localisation] \label{sketch/module/bousfield}

  Let $\sk C$ be a sketch on the category $C$.
  The inclusion of $\Mod(\sk C)$ into $\P(C)$ admits a left adjoint
  \[
    L_\sk C:\P(C)=\Mod(C_\emptyset) \rightarrow \Mod(\sk C).
  \]
  which exhibits $\Mod(\sk C)$ as a Bousfield localisation of $\P(C)$ generated by the set of coboundary morphisms of elements of $R(\sk C)$.
  In particular, $\Mod(\sk C)$ is presentable.
  Moreover, $e_F:F\rightarrow L_\sk CF$ can be expressed as a transfinite composition of pushouts of boundary maps associated to elements of $R(\sk C)$.
  
  More generally, if $R'$ is another set of conical diagrams in $C$, then $\Mod(\sk C\cup R')$ is a Bousfield localisation of $\Mod(\sk C)$.
  
\end{proposition}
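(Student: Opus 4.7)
The plan is to reduce everything to a direct invocation of the existence theorem for Bousfield localisations. The hypothesis that makes this work is Proposition \ref{sketch/module/criterion}, which identifies $\Mod(\sk C) \subseteq \P(C)$ as the full subcategory of $\bar R$-local objects, where $\bar R$ is the (small) set of coboundary morphisms of elements of $R(\sk C)$ described in \ref{conical-diagram/boundary}.

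First I would fix a universe $\Universe$ with $\sk C \ll \Universe$ so that $\Universe\P(C)$ is $\Universe$-presentable and $\bar R$ is a $\Universe$-small set of morphisms in it. Then by \pathcite{cis/bousfield-localization/existence} (or equivalently \pathcite{htt/bousfield-localization/existence}), the inclusion of the full subcategory of $\bar R$-local objects into $\Universe\P(C)$ admits an accessible left adjoint, and the resulting localisation is a Bousfield localisation generated by $\bar R$. By Proposition \ref{sketch/module/criterion}, this full subcategory is precisely $\Universe\Mod(\sk C)$, which therefore inherits $\Universe$-presentability.

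The transfinite composition description of $e_F : F \to L_\sk C F$ comes for free from the explicit construction of the localisation via the small object argument: the unit is built by iteratively forming pushouts against the generating set $\bar R$ of coboundary maps, and taking a transfinite colimit until the process stabilises. This is exactly the content of the construction behind \pathcite{cis/bousfield-localization/existence}; no further argument is needed beyond pointing to it.

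Finally, for the extension statement, I would observe that a presheaf is a module over $\sk C \cup R'$ if and only if it is $\bar R$-local \emph{and} $\bar{R'}$-local (again by Proposition \ref{sketch/module/criterion}), so the inclusion $\Mod(\sk C \cup R') \hookrightarrow \Mod(\sk C)$ is the inclusion of the $\bar{R'}$-local objects inside a presentable category. The same existence theorem applied inside $\Mod(\sk C)$ then yields the desired Bousfield localisation. The only ``obstacle'', which is really just a bookkeeping point, is to check that none of the typical-ambiguity rules of \ref{universe/ambiguity} are violated: all four statements can be resolved against a single fixed universe containing $\sk C$ and $R'$, so the chain $\P(C) \to \Mod(\sk C) \to \Mod(\sk C \cup R')$ sits coherently inside one presentable world.
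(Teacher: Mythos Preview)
Your proposal is correct and follows essentially the same approach as the paper: identify $\Mod(\sk C)$ with the $\bar R$-local objects via Proposition \ref{sketch/module/criterion}, then invoke the existence theorem \pathcite{cis/bousfield-localization/existence}. The paper's proof is terser and leaves the presentability, transfinite-composition, and extension claims implicit, whereas you spell them out; but the argument is the same.
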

\begin{proof}

  By criterion \ref{module/criterion/local} of Proposition \ref{sketch/module/criterion}, $\Mod(\sk C)\subseteq\P(C)$ is the set of local objects for a small set of morphisms, so by \pathcite{cis/bousfield-localization/existence} its inclusion admits a left adjoint.
  \qedhere
  
\end{proof}

\begin{definition}[Sketchability]

  An accessible left localisation of $\P(C)$ is said to be \emph{sketchable} if the local objects are exactly the models for some sketch with category of cells $C$.
  
\end{definition}

\begin{corollary}[of Proposition \ref{sketch/module/bousfield}] \label{sketchability}

  An accessible localisation of $\P(C)$ is sketchable if and only if it is generated as a Bousfield localisation by weak equivalences with representable target.
  A sketchable localisation admits a presentation by a $\Universe$-small sketch.
  
\end{corollary}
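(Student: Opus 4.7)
The plan is to prove both directions by direct construction, using Proposition \ref{sketch/module/bousfield} in the forward direction and the density of the Yoneda embedding (Corollary \ref{yoneda/is-dense}) in the backward direction.

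For the forward direction, suppose the localisation is presented by a sketch $\sk C = (C\sep R)$. By Proposition \ref{sketch/module/bousfield}, $\Mod(\sk C)$ is the Bousfield localisation of $\P(C)$ generated by the coboundary morphisms $d_U:\colim_{i:I}\Yoneda_C(U_i)\to \Yoneda_C(U_\omega)$ of elements $U:I\rightarrow C\slice U_\omega$ of $R$. These morphisms all have representable target $U_\omega$, as required.

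For the backward direction, let $L:\P(C)\rightarrow \P(C)$ be an accessible localisation and let $S = \{f_\alpha:X_\alpha\rightarrow \Yoneda_C(c_\alpha)\}_\alpha$ be a generating set of weak equivalences with representable targets. For each $\alpha$, I will produce a conical diagram in $C$ whose coboundary morphism recovers $f_\alpha$. The source $X_\alpha$ is a presheaf on $C$; since it is $\Universe$-small for some $\Universe$, it is equivalent to a $\Universe$-small colimit of representables, say $X_\alpha \cong \colim_{i:I_\alpha}\Yoneda_C(U_i)$ for some diagram $U:I_\alpha\rightarrow C$. The morphism $f_\alpha$ then promotes this to a cone $\tilde U:I_\alpha\rightarrow C\slice c_\alpha$, whose coboundary morphism in $\P(C)$ is (by construction, up to equivalence) the original $f_\alpha$. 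Taking $R$ to be the collection of all such $\tilde U$, the sketch $\sk C = (C\sep R)$ has, by Proposition \ref{sketch/module/bousfield}, exactly $L$ as its associated Bousfield localisation. The accessibility of $L$ and the fact that the generating set $S$ can be chosen $\Universe$-small (with each $X_\alpha$ a $\Universe$-small presheaf, hence each $I_\alpha$ a $\Universe$-small category) together yield a $\Universe$-small presentation.

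The main subtlety is ensuring that the sources of the generating equivalences can simultaneously be chosen to be $\Universe$-small presheaves (so that the corresponding index categories $I_\alpha$ are $\Universe$-small); this is precisely what accessibility of $L$ buys us, via the universal characterisation of $\Universe$-small presheaves in \eqref{size} as $\Universe$-small colimits of representables. Once this size control is in place, the rest is just a matter of matching coboundary morphisms with generating morphisms via density, and appealing to the previous proposition.
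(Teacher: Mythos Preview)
Your proof is correct and follows the approach the paper intends (the paper states this as a corollary of Proposition \ref{sketch/module/bousfield} with no proof given). The forward direction is immediate from that proposition; for the backward direction you correctly identify the key step the paper leaves implicit, namely that any map $f_\alpha:X_\alpha\to\Yoneda_C(c_\alpha)$ with representable target arises as the coboundary of a conical diagram in $C$, by writing $X_\alpha$ as a colimit of representables and using full faithfulness of $\Yoneda_C$ to lift the resulting cone to $C\slice c_\alpha$.

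One small comment on the size argument: your phrasing ``accessibility of $L$ \ldots\ [means] the generating set $S$ can be chosen $\Universe$-small'' is fine under the natural reading where $C$ is $\Universe$-small and one works in $\Universe\P(C)$ throughout, so that any accessible localisation is generated by a $\Universe$-small set of morphisms between $\Universe$-presheaves. If instead one reads ``sketchable'' as allowing an \emph{a priori} large set $R$ of relations, an extra sentence is needed to explain why a small generating set with representable targets exists; but this is not the reading the paper has in mind (Proposition \ref{sketch/module/bousfield} itself already assumes a small sketch, as its appeal to \pathcite{cis/bousfield-localization/existence} makes clear).
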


\begin{para}[Explicit forms of the localiser] \label{sketch/module/localizer/explicit}

  The localisation $L_\sk C$ is generated by what is referred to in \cite{anel2020small} as a `pre-modulator'.
  As such, in addition to the construction of the usual small object argument (which, following \emph{op.~cit}, we dub the $q$-\emph{construction}), $L_\sk C$ can also be described as a transfinite, \emph{convergent}, iteration of a `$k$-construction' or `$+$-construction.'
  
  If $F:\P(C)$, then we will define the \emph{plus construction} (rel.~$R$).
  Denote by $\bar{R}$ the essential image of $R$ in $\P(C)^{\Delta^1}$ and by $\bar{R}\slice F$ the slice over $[F\rightarrow\point]$.
  Note that while each object of $\bar{R}\slice F$ can be expressed as a diagram $(I\sep U)\in R$ with an element of $\lim_{i:I}F(U_i)$, morphisms do not generally lift to maps of diagrams.
  Then $F^+$ is defined to be the pushout of the cospan
  \[
    \begin{tikzcd}
      {|\source|_C} \ar[r] \ar[d] & F \\
      {|\target|_C}
    \end{tikzcd}
  \]
  which is constructed from the natural transformation
  \[ 
    \begin{tikzcd}
      \bar{R}\slice F \ar[r, "\source", bend left] \ar[r, phantom, "\Downarrow"]
      \ar[r, "\target"', bend right] & \P(C)
    \end{tikzcd}
  \]
  and its lift $\source:\bar{R}\slice F\hookrightarrow\P(C)\slice F$.
  After transfinite iteration, this construction converges to $L_\sk CF$ \cite[Thm.~2.4.8]{anel2020small}.
  
  In \S\ref{construction/universal/} it will be useful to have a rewritten form of this colimit.
  First, let us left Kan extend the diagram $\Delta^1\times \bar{R}\slice F$ along $\bar{R}\slice F\rightarrow (\bar{R}\slice F)^\triangleleft$ to obtain a functor
  \[
    \Delta^1\times (\bar{R}\slice F)^\triangleleft \rightarrow \P(C)
  \]
  which maps the $\Delta^1\times\{\text{cone point}\}$ to $\emptyset$.
  Similarly, it is harmless to replace $F$ with the constant $(\bar{R}\slice F)^\triangleleft$-indexed functor with value $F$, as this category is weakly contractible.
  Hence, $F^+$ is also the colimit of a diagram
  \[
    \Lambda^2_2 \times (\bar{R}\slice F)^\triangleleft \rightarrow \P(C)
  \]
  (where $\Lambda^2_2\simeq [1 \leftarrow 0 \rightarrow 2]$ is the walking cospan).
  Pushing out the $\Lambda^2_2$ factor, we find that $F^+$ is a colimit over a diagram indexed by $(\bar{R}\slice F)^\triangleleft$ with initial vertex $F$ and with all other vertices a pushout of an element of $\bar{R}$ along a map from the source to $F$.
  In formulas, we can write this as
  \[
    F^+ \defeq \colim_{W:C,U:\bar{R}(W),U\rightarrow F}W\sqcup_UF
  \]
  in $F\slice \P(C)$.
  In other words,
  \[
    F^+(V) = \colim_{V\rightarrow W,U:\bar{R}(W)} \Map(U,F).
  \]

\end{para}

\begin{example}[Diagram] \label{presheaf/localizer/example/conical}

  Let $I^\triangleright_{\{\id\}}$ be a conical sketch \eqref{sketch/example/conical}.
  The essential image
  \[
    \{\id\} \rightarrow \bar{\{\id\}} \hookrightarrow \P(I^\triangleright)^{\Delta^1}
  \]
  is contractible --- its sole element is modelled by the map of right fibrations
  \[
    \begin{tikzcd}[column sep = tiny]
      I \ar[rr] \ar[dr] && I^\triangleright \ar[dl] \\ & I^\triangleright
    \end{tikzcd}
  \]
  whose automorphism group is trivial because $I\rightarrow I^\triangleright$ is fully faithful, hence monic.
  
  If $F:\P(I^\triangleright)$, the localisation $LF=F^+$ is therefore a pushout
  \[
    \begin{tikzcd}
      F(\omega)\times \omega \ar[r] \ar[d] \ar[dr, phantom, "\ulcorner" near end]
      & F \ar[d] \\
      \left(\lim_{i:I}F(i)\right)\times\omega \ar[r] & F^+
    \end{tikzcd}
  \]
  so that $F^+(\omega)=\lim_{i:I}F(i)$ and $F^+|_I=F|_I$.
  In particular, the plus construction terminates in one step.

\end{example}

\begin{para}[Yoneda functor] \label{sketch/yoneda/definition}

  Composing the standard Yoneda embedding with $(-)^+$ yields a dense functor $\Yoneda_\sk C:\sk C\rightarrow \Mod(\sk C)$ which is a morphism of sketches, but which need not be fully faithful in general.
  In fact, the composite of a Yoneda functor with a Bousfield localisation is fully faithful if and only if representable presheaves are local; so in particular, $\Yoneda_\sk C$ is fully faithful if and only if $\sk C$ is subcanonical.
  In particular, our localised Yoneda functor agrees with the Yoneda embedding for subcanonical sketches defined in \eqref{sketch/module/subcanonical}.
  
\end{para}

\begin{corollary}[Colimit of Yoneda]

  The final object of $\Mod(\sk C)$ is a colimit of $\Yoneda_\sk C$.
  
\end{corollary}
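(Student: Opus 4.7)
The plan is to derive this as a direct corollary of the density of $\Yoneda_\sk C$, established in \ref{sketch/yoneda/definition}, together with the final-object clause of Proposition \ref{dense/properties}. There is essentially one small calculation to make and one theorem to cite.

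First I would check that $\Mod(\sk C)$ actually has a final object. By Proposition \ref{sketch/module/criterion}, $\Mod(\sk C) \subseteq \P(C)$ is closed under all limits, so it inherits the terminal presheaf $\underline{\point}$ from $\P(C)$; and $\underline{\point}$ is trivially $R$-continuous since for any relation $U\colon I \to C\slice \omega$, both sides of the boundary map are contractible. Call this object $1$.

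Next I would apply density. Because $\Yoneda_\sk C$ is dense (a fact recorded in \ref{sketch/yoneda/definition}), for every $X \in \Mod(\sk C)$ the diagram $\Yoneda_\sk C\slice X\colon C\slice X \to \Mod(\sk C)$ is a colimit diagram with apex $X$, via Proposition \ref{dense/criterion}-\ref{dense/criterion/colimit}. Specialising to $X=1$, one observes $\Mod(\sk C)\slice 1 \to \Mod(\sk C)$ is an equivalence (the space of maps into the final object is contractible), and therefore $C\slice 1 \defeq C \times_{\Mod(\sk C)}\Mod(\sk C)\slice 1 \simeq C$ canonically, under which $\Yoneda_\sk C \slice 1$ is identified with $\Yoneda_\sk C$. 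Hence $1$ is a colimit of $\Yoneda_\sk C$, which is precisely the claim. Equivalently, one can invoke Proposition \ref{dense/properties}-\ref{dense/properties/final} directly, which packages this very reduction.

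No step here is a genuine obstacle; the only point needing care is keeping the universes consistent so that ``colimit of $\Yoneda_\sk C$'' is a meaningful statement --- one must take $\Mod(\sk C)$ in a universe large enough that $C$-indexed colimits are computed there, which is automatic once $C\ll \Universe$ and we work in $\Universe\Mod(\sk C)$.
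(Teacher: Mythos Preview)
Your proposal is correct and follows precisely the route the paper intends: the corollary is stated without proof immediately after \ref{sketch/yoneda/definition} establishes that $\Yoneda_\sk C$ is dense, and is meant to be read off from Proposition \ref{dense/properties}-\ref{dense/properties/final} (whose own proof is exactly the $C\slice 1 \simeq C$ reduction you spell out). Your extra verification that the terminal presheaf lies in $\Mod(\sk C)$ is a harmless elaboration of what the paper leaves implicit.
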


\begin{para}[Factorisation systems on presheaves]
\label{module/factorization-system}

  We have defined our localisations by a generating set of morphisms.
  The small object argument applies, and generates us a weak factorisation system on $\P(C)$ comprising the $R$-\emph{cell complexes} and the $R$-\emph{fibrations}.
  
  In particular, a presheaf $F$ is $R$-\emph{fibrant} if for all $(I\sep U)$ in $R$ the boundary map $F(U_\omega)\rightarrow\lim_{i:I}F(U_i)$ admits a section. 
  All $R$-continuous presheaves are $R$-fibrant.
  
\end{para}

%%%%%%%%%%%%%%%%%%%%%%%%%%%%%%%%%%%%%%%%%%%%%%%%%%%%%%%%%%%%%%%%%%%%%%%%%%%%%%

\subsection{Modules as a localisation of diagrams} \label{sketch/localizer/}

Combining Proposition \ref{sketch/module/bousfield} with \ref{diagram/localizer} we obtain a left localisation of $\Cat\slice C$ satisfying a generalised form of the axioms for a fundamental localiser.

\begin{definition}[$R$-equivalence]

  A lax map of diagrams (\ref{diagram/lax-map}) in $C$ is said to be an $R$-\emph{equivalence} if it induces an isomorphism in $\P_R(C)$.
  
\end{definition}

\begin{corollary}[Modules as a localisation of diagrams]

  $\Mod(\sk C)$ is a Bousfield localisation of $\Cat\slice C$ at the set of strict $R$-equivalences.
  
\end{corollary}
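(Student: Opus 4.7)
The proof is essentially the composition of two localisations already established in the paper. The plan is as follows.

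First, I would observe that the inclusion $\Mod(\sk C) \hookrightarrow \Cat\slice C$ factors through $\P(C)$, as $\Mod(\sk C)$ is a full subcategory of $\P(C)$ and $\P(C) \hookrightarrow \Cat\slice C$ is fully faithful (by \ref{diagram/presents-presheaves} via $\int_C$). Taking left adjoints to each inclusion in turn: by Corollary \ref{diagram/presents-presheaves}, we have $|-|_C : \Cat\slice C \rightarrow \P(C)$ left adjoint to $\int_C$; by Proposition \ref{sketch/module/bousfield}, we have $L_\sk C : \P(C) \rightarrow \Mod(\sk C)$ left adjoint to the inclusion. Their composite $L_\sk C \circ |-|_C$ is therefore left adjoint to the composite fully faithful inclusion $\Mod(\sk C) \hookrightarrow \Cat\slice C$, and this exhibits $\Mod(\sk C)$ as a Bousfield localisation of $\Cat\slice C$.

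Second, I would identify the set of morphisms inverted by this composite localisation. A morphism $h : (I,U) \rightarrow (J,V)$ in $\Cat\slice C$ is inverted by $L_\sk C\circ|-|_C$ if and only if the induced map $|U|_C \rightarrow |V|_C$ becomes an isomorphism in $\Mod(\sk C)$; equivalently (by the criterion \ref{sketch/module/criterion}\eqref{module/criterion/local} for $R$-locality plus the adjunction), if and only if it becomes an isomorphism in $\P_R(C) = \Mod(\sk C)$. Since morphisms in $\Cat\slice C$ are strict commuting triangles (maps in the $\infty$-categorical slice, not lax maps), this is precisely the definition of a strict $R$-equivalence.

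Finally, I would remark that the generators can be described explicitly: by Corollary \ref{diagram/presents-presheaves}, the first localisation is generated by cofinal maps of diagrams (or more specifically by pushouts of $1 : \Delta^0 \rightarrow \Delta^1$), and by Proposition \ref{sketch/module/bousfield}, the second is generated by the coboundary morphisms of elements of $R(\sk C)$ (which, under $\int_C$, correspond to particular maps in $\Cat\slice C$). Thus the strict $R$-equivalences are the strongly saturated class generated by these two families.

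There is really no substantive obstacle here — everything follows by composing the two Bousfield localisations already constructed. The only minor point needing attention is confirming that ``strict'' in the statement refers to the fact that we are localising in $\Cat\slice C$ (whose morphisms are honest commuting triangles), as opposed to the wider notion of lax map of diagrams introduced in \ref{diagram/lax-map}; the word is a reminder rather than an additional hypothesis.
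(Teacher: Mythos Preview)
Your proposal is correct and matches the paper's approach: the corollary is stated without proof, and the opening sentence of the subsection (``Combining Proposition \ref{sketch/module/bousfield} with \ref{diagram/localizer}\ldots'') indicates exactly the composition of the two Bousfield localisations you describe. Your additional remarks on explicit generators and on the meaning of ``strict'' are accurate and go slightly beyond what the paper spells out.
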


\begin{corollary}[Localisers from sketches] \label{sketch/localizer/properties}

  Let $W_\sk C\subseteq (\Cat\slice C)^{\Delta^1}$ be the set of $R(\sk C)$-equivalences of diagrams.
  Then:
  \begin{enumerate}
  
    \item \label{sketch/localizer/properties/saturation}
      $W_\sk C$ is strongly saturated.
      
    \item \label{sketch/localizer/properties/cells}
      If $I$ has a final object $\omega$, then 
      \[
        [\omega:\point\rightarrow I\stackrel{U}{\rightarrow} C] \in W_\sk C
      \]
      for all $U:\Fun(I,C)$.
      
    \item \label{sketch/localizer/properties/quillen}
      If $I\stackrel{h}{\rightarrow} J\rightarrow K\rightarrow C$ is such that
      \[
        [h\slice k:I\slice k\rightarrow J\slice k\rightarrow C]\in W_\sk C
      \]
      for all $k:K$, then $h\in W_\sk C$. \hfill \emph{Quillen A}
  \end{enumerate}
  Moreover, $W_\sk C$ is \emph{of cellular generation}: it is minimal among strongly saturated sets of maps of diagrams containing $W_\sk C\cap(\Cat\coneslice C)$, cofinal maps of diagrams, and satisfying Quillen A.
  
  Conversely, any set of maps $W\subseteq (\Cat\slice C)^{\Delta^1}$ satisfying the preceding properties is the set of weak equivalences for a sketch on $C$.
  
\end{corollary}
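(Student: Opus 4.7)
The plan is to address the three listed properties, the cellular generation, and the converse in sequence, pivoting throughout on the preceding corollary that exhibits $|-|_\sk C \colon \Cat\slice C \to \Mod(\sk C)$ as a Bousfield localisation. Property (1) is immediate since weak equivalences of any Bousfield localisation form a strongly saturated class. Property (2) holds because an inclusion $\{\omega\} \hookrightarrow I$ of a final object is cofinal, hence inverted already by $|-|_C$ (Corollary \ref{diagram/presents-presheaves}) and so a fortiori by $|-|_\sk C = L_\sk C\circ|-|_C$. Property (3), Quillen A, follows by applying the colimit-preserving reflection $L_\sk C$ to the identification $|hV|_C \cong \colim_{k:K}|hV\slice k|_C$ supplied by Lemma \ref{shape/quillen-a}, so that slicewise $W_\sk C$-equivalence of $h$ over $K$ propagates to a $W_\sk C$-equivalence of $h$ itself.

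For the cellular generation I would prove the slightly stronger claim that $W_\sk C$ equals the strong saturation $\overline S$ of $S := \{\text{cofinal maps}\} \cup R$, with each $U \in R$ regarded as the morphism $(I,U|_I)\to(\point,U_\omega)$ in $\Cat\slice C$. The inclusion $\overline S \subseteq W_\sk C$ is immediate from property (1) together with the containments $\{\text{cofinal maps}\}\subseteq W_C\subseteq W_\sk C$ and $R\subseteq W_\sk C$. For the reverse, $L_{\overline S}$ inverts all cofinal maps and so factors through $|-|_C$ by universal property; the induced localisation of $\P(C)$ inverts every coboundary $d_U\in\bar R$, because each $U\in\overline S$ is a preimage of $d_U$ under $|-|_C$, and is therefore dominated by $L_\sk C$ in view of Proposition \ref{sketch/module/bousfield}, giving $W_\sk C \subseteq \overline S$. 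Minimality then comes for free: any strongly saturated class containing cofinal maps and $R$ contains $\overline S = W_\sk C$, so the additional hypotheses of Quillen A and of full containment of $W_\sk C\cap\Cat\coneslice C$ beyond $R$ are redundant for this implication but included to symmetrise with the converse.

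For the converse, given $W$ with the stated properties I set $R:=W\cap\Cat\coneslice C$ and form $\sk C := (C,R)$. The inclusion $W_\sk C \subseteq W$ follows at once from $W_\sk C = \overline S$, since $W$ is strongly saturated and contains both cofinal maps and $R$ by hypothesis and construction. For $W\subseteq W_\sk C$ I appeal to the cellular generation hypothesis on $W$, applied to $W_\sk C$: the latter satisfies all four required properties by (1)--(3) and its trivial containment of $R = W\cap\Cat\coneslice C$. The main conceptual obstacle throughout is the identification $W_\sk C = \overline S$, essentially a soundness statement for composing the two Bousfield localisations $|-|_C$ and $L_\sk C$, with the care required being in correctly matching the generators $\bar R\subset\P(C)^{\Delta^1}$ against their tautological lifts in $R$ as morphisms in $\Cat\slice C$.
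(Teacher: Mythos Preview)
Your proof is correct and follows essentially the same route as the paper's. The main organisational difference is that the paper isolates an intermediate lemma (Lemma~\ref{sketch/localizer/image}) asserting that any strongly saturated $W$ containing cofinal maps is the preimage under $|-|_C$ of its image, and that with Quillen~A this image is a Bousfield class; you instead argue directly that $W_\sk C$ equals the strong saturation $\overline S$ of $\{\text{cofinal maps}\}\cup R$ by factoring the Bousfield localisation $L_{\overline S}$ through $|-|_C$ and comparing the induced localisation of $\P(C)$ with $L_\sk C$. This is the same content, just packaged without the detour through the lemma, and it has the minor advantage of making visible that Quillen~A is not actually needed for the minimality direction (it enters only in the converse, via the cellular generation hypothesis on $W$).
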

\begin{proof}

  Property \ref{sketch/localizer/properties/saturation} is obvious, \ref{sketch/localizer/properties/cells} follows from Corollary \ref{diagram/localizer} and the fact that $W_C\subseteq W_\sk C$, and \ref{sketch/localizer/properties/quillen} follows from Lemma \ref{shape/quillen-a}.
  
  For the remaining statements, we introduce a lemma:
  
\begin{lemma} \label{sketch/localizer/image}

  Let $W\subseteq(\Cat\slice C)^{\Delta^1}$ be strongly saturated and contain the cofinal maps of diagrams.
  Then $W$ is the preimage under $|-|_C$ of its image in $\P(C)^{\Delta^1}$.
  
  If in addition, $W$ satisfies \ref{sketch/localizer/properties}-\ref{sketch/localizer/properties/quillen}, then the image of $W$ in $\P(C)^{\Delta^1}$ is a Bousfield class.
  
\end{lemma}
\begin{proof}

  By Corollary \ref{diagram/localizer}, $W$ contains $W_C$ (the set of $|-|_C$-equivalences).
  In particular, for any diagram $U:I\rightarrow C$ the map $I\rightarrow C\slice|U|_C\rightarrow C$ is in $W$.
  The first statement follows immediately.
  If $W$ also satisfies Quillen A, then its image is closed under colimits by Lemma \ref{shape/quillen-a}. \qedhere

\end{proof}
  
  Lemma \ref{sketch/localizer/image} tells us that any $W$ satisfying these three conditions is the preimage of a Bousfield class in $\P(C)$.
  By definition, the image of $W_\sk C$ is generated as such by $R(\sk C)\subseteq W_\sk C\cap(\Cat\coneslice C)$, hence is of cellular generation.
  
  By the same logic, by taking preimage the lattice of sets of arrows in $(\Cat\slice C)^{\Delta^1}$ satisfying all of the above conditions is exactly the lattice of Bousfield classes in $\P(C)^{\Delta^1}$ generated by maps to representables. By Corollary \ref{sketchability}, these are exactly the sketchable localisations of $\P(C)$.
  \qedhere
  
\end{proof}

\begin{remark}[Sketches from localisers]

  By analogy with \cite[Prop.~4.2.4]{cisinski2006prefaisceaux}, it is natural to conjecture that the converse part of Corollary \ref{sketch/localizer/properties} holds with strong saturation replaced with \emph{weak} saturation (i.e.~closure under 2-out-of-3 and retracts), which is often easier to check, and without assuming $W$ contains cofinal maps of diagrams.
  The tricky part of this is to show that such a weakly saturated $W$ contains $W_C$.
  
\end{remark}

\begin{para}[Factorisation systems on diagrams] \label{diagram/factorization-system}

  Translating \eqref{module/factorization-system} to diagrams, $R$-fibrancy of $F$ is the solvability of the lifting problem
  \[
    \begin{tikzcd}
      I \ar[d] \ar[r] & C\slice F \ar[d] \\
      I^\triangleright \ar[r, "U"] \ar[ur, dotted, "\exists" description] & C
    \end{tikzcd}
  \]
  for all $(I\sep U):R$, which resembles a relativised version of the condition for the diagram $C\slice F$ to be \emph{filtered} with the hypothesis ``$I$ finite'' replaced by ``$(I\sep U)\in R$''.
  A similar lifting problem can be used to define $R$-fibrancy/filteredness for arbitrary objects of $\Cat\slice C$.
  
\end{para}

%%%%%%%%%%%%%%%%%%%%%%%%%%%%%%%%%%%%%%%%%%%%%%%%%%%%%%%%%%%%%%%%%%%%%%%%%%%%%%

\subsection{Extension theorems}
\label{sketch/extension/}

Here we discuss the extension of morphisms of sketches to cocontinuous functors between module categories.

\begin{para}[Relations on module categories] \label{sketch/module/relations}
 
  We upgrade the category of modules for a sketch $\sk C$ by equipping it with relations the set
  \[
    (\Universe\Cat\slice\sk C)-\colimit \quad \subseteq \quad
    \Universe\Cat\coneslice \Mod(\sk C)
  \]
  of colimit cones over the composite with $\Yoneda_\sk C$ \eqref{sketch/yoneda/definition} of a $\Universe$-small diagram in $C$.
  We resolve the ambiguous notation $\Mod(\sk C)$ and $\mathtt{colimits}(\sk C)$ by choosing $\Universe\gg \sk C$.
  
  By density of $\Yoneda_\sk C$ and Proposition \ref{dense/extension}, we find
  \[
    \Fun(\Mod(\sk C)\sep (E\sep \colimit)) = \Fun^{cc}(\Mod(\sk C)\sep E)
  \]
  is the category of functors that preserve small colimits, for any cocomplete category $E$.

\end{para}

\begin{prop}[Universal property of module categories]
\label{sketch/extension/existence}

  Let $\sk C$ be a sketch, and let $E:\Cat$ be cocomplete with $\sk C\ll E$. 
  Restriction along the localised Yoneda functor $\Yoneda_{\sk C}:\symb(\sk C)\rightarrow \Mod(\sk C)$ induces an equivalence of categories
  \[
    \Fun(\Mod(\mathcal{C}) \sep (E\sep \colimit))  \quad \tilde\rightarrow 
    \quad \Fun(\mathcal{C} \sep (E \sep \colimit)).
  \]

\end{prop}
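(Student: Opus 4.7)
The strategy is to factor the extension into two stages: first extend a morphism of sketches $F: \sk C \to (E\sep \colimit)$ along the standard Yoneda embedding $\Yoneda_C: C \to \P(C)$ to a colimit-preserving functor out of presheaves, then descend along the Bousfield localisation $L_\sk C: \P(C) \to \Mod(\sk C)$ of Proposition \ref{sketch/module/bousfield}. Each stage realises an equivalence onto an appropriately-described full subcategory of a functor category, and composing them yields the claim.

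For stage one, since $E$ is cocomplete with $\sk C \ll E$, the universal property of presheaves \pathcite{cis/presheaf/map-out} supplies an equivalence
\[
  \Fun^{cc}(\P(C)\sep E) \;\simeq\; \Fun(C\sep E)
\]
given by restriction along $\Yoneda_C$, where $\Fun^{cc}$ denotes colimit-preserving functors. For stage two, Proposition \ref{sketch/module/bousfield} exhibits $\Mod(\sk C)$ as a Bousfield localisation of $\P(C)$ generated by the coboundary morphisms $d_U: |U|_C \to U_\omega$ associated to $U \in R(\sk C)$. By \pathcite{htt/bousfield-localization/map-out}, restriction along $L_\sk C$ then identifies $\Fun^{cc}(\Mod(\sk C)\sep E)$ with the full subcategory of $\Fun^{cc}(\P(C)\sep E)$ spanned by those $\hat F$ which invert each $d_U$.

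The heart of the argument is to translate this inversion condition back through the equivalence of stage one. Given $F: C \to E$ with colimit-preserving extension $\hat F: \P(C)\to E$, the definition of the classifying presheaf \eqref{shape/definition} identifies $|U|_C$ with $\colim_{i:I}\Yoneda_C(U_i)$ in $\P(C)$, so $\hat F(d_U)$ is the comparison map $\colim_{i:I} F(U_i) \to F(U_\omega)$, i.e.~the coboundary of $F\circ U$ in $E$. Thus $\hat F$ inverts $d_U$ precisely when $F \circ U$ is a colimit cone over $F(U_\omega)$, which is exactly the condition that $F$ be a morphism of sketches $\sk C \to (E\sep \colimit)$. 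Combining this with the identification in \ref{sketch/module/relations}, namely $\Fun(\Mod(\sk C)\sep (E\sep \colimit)) = \Fun^{cc}(\Mod(\sk C)\sep E)$ (which uses density of $\Yoneda_\sk C$ and Proposition \ref{dense/extension}), the two stages compose to the asserted equivalence. The main obstacle is bookkeeping: one must be careful that ``$\hat F$ inverts $d_U$'' and ``$F$ sends $U$ to a colimit cone'' match up cleanly, which hinges on $\hat F$ preserving the defining colimit of $|U|_C$.
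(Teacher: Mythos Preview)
Your proof is correct and follows essentially the same approach as the paper, which simply cites the universal property of presheaves \pathcite{cis/presheaf/map-out} and that of Bousfield localisation \pathcite{htt/bousfield-localization/map-out}; you have made explicit the bookkeeping (the translation between inverting $d_U$ and sending $U$ to a colimit cone, and the identification from \ref{sketch/module/relations}) that the paper's one-line proof leaves implicit.
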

\begin{proof}

  Combining the universal property of presheaves \pathcite{cis/presheaf/map-out} with that of localization \pathcite{htt/bousfield-localization/map-out}. \qedhere

\end{proof}

\begin{corollary}

  A functor of sketches $f:\sk C\rightarrow \sk D$ is a $\Universe$-Morita equivalence if and only if precomposition with $f$ induces equivalences of spaces
  \[
    \Fun(\sk D, (E\sep\colimit)) \quad\tilde\rightarrow \quad
    \Fun(\sk C, (E\sep \colimit))
  \]
  for all $\Universe$-cocomplete categories $E$.
  That is, Morita equivalences are exactly equivalences of the induced corepresentable functors on cocomplete categories.
  
\end{corollary}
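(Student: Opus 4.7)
The plan is to translate the corepresentability statement into an equivalence criterion for $f_!:\Mod(\sk C)\rightarrow\Mod(\sk D)$ (the left adjoint to $f^*$, which exists as the cocontinuous extension of $\Yoneda_\sk D\circ f$), and then reduce to a Yoneda-lemma argument in the category of presentable categories and cocontinuous functors.

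First, I would unwind Definition~\ref{sketch/morita-equivalence/definition}: $f$ is a $\Universe$-Morita equivalence iff $f^*$ is an equivalence, which, since $f^*$ has left adjoint $f_!$, is in turn equivalent to $f_!$ being an equivalence. Second, I would apply the universal property (Proposition~\ref{sketch/extension/existence}) to assemble, for each cocomplete $E$ with $\sk C,\sk D\ll E$, a commuting square
\[
  \begin{tikzcd}
    \Fun^{cc}(\Mod(\sk D)\sep E) \ar[r, "-\circ f_!"] \ar[d, "\Yoneda_\sk D^*"'] & \Fun^{cc}(\Mod(\sk C)\sep E) \ar[d, "\Yoneda_\sk C^*"] \\
    \Fun(\sk D\sep(E\sep\colimit)) \ar[r, "-\circ f"'] & \Fun(\sk C\sep(E\sep\colimit))
  \end{tikzcd}
\]
in which the vertical arrows are equivalences; commutativity holds because the defining relation $f_!\circ\Yoneda_\sk C\simeq\Yoneda_\sk D\circ f$ (which is just the unit identity for the cocontinuous extension of $\Yoneda_\sk D\circ f$) implies both composites restrict to the same functor $\sk C\rightarrow(E\sep\colimit)$. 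Thus the top arrow is an equivalence for all such $E$ iff the bottom one is.

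The forward implication is then immediate: if $f_!$ is an equivalence, precomposition with it is trivially so. For the reverse, I would run the standard $\PrL$-Yoneda argument: assuming the top arrow is an equivalence for all cocomplete $E$, specialise to $E=\Mod(\sk C)$ to produce $g:\Mod(\sk D)\rightarrow\Mod(\sk C)$ cocontinuous with $g\circ f_!\simeq\id_{\Mod(\sk C)}$ (take $g$ to be the preimage of $\id_{\Mod(\sk C)}$); then specialise to $E=\Mod(\sk D)$, where $f_!\circ g\circ f_!\simeq f_!\simeq\id_{\Mod(\sk D)}\circ f_!$ and the fact that precomposition with $f_!$ is an equivalence (hence in particular conservative on the level of hom-spaces) yields $f_!\circ g\simeq\id$. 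Both module categories are presentable by Proposition~\ref{sketch/module/bousfield}, so these specialisations are legitimate for any $\Universe$ with respect to which $\sk C$ and $\sk D$ are small.

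The only real subtlety is the compatibility of the universal property with functoriality in the sketch, i.e.~verifying the square above; this ultimately reduces to the uniqueness clause in Proposition~\ref{sketch/extension/existence} combined with $f_!\Yoneda_\sk C\simeq\Yoneda_\sk D f$. Beyond this bookkeeping, the argument is formal.
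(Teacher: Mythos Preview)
Your argument is correct and is precisely the intended proof: the paper states this as an immediate corollary of Proposition~\ref{sketch/extension/existence} without further comment, and what you have written is exactly the unwinding of that implication via the Yoneda argument in $\PrL\Cat$. The one point worth making explicit (which you do) is that $\Mod(\sk C)$ and $\Mod(\sk D)$ are themselves $\Universe$-cocomplete, so the specialisations $E=\Mod(\sk C)$ and $E=\Mod(\sk D)$ are legitimate.
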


\begin{corollary}

  For any sketch $\sk C$, the maps 
  \[
    \sk C \quad\stackrel{\Yoneda_\sk C}{\longrightarrow} \quad
    (\Mod(\sk C) \sep (\Universe\Cat\slice C)-\colimit) \quad\longrightarrow\quad
    (\Mod(\sk C)\sep \colimit)
  \]
  are Morita equivalences.
  
\end{corollary}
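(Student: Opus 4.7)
The plan is to reduce the statement to Proposition \ref{sketch/extension/existence} applied at the cocomplete target $E = \Space^\op$, combined with a rectification argument. Unwinding definitions, a module of $(\Mod(\sk C), \colimit)$ is the same datum as a cocontinuous functor $\Mod(\sk C) \to \Space^\op$, so by Proposition \ref{sketch/extension/existence} (cf.~\S\ref{sketch/module/relations}) restriction along $\Yoneda_\sk C$ induces an equivalence
\[
  \Mod((\Mod(\sk C), \colimit)) \quad\tilde\rightarrow\quad \Mod(\sk C).
\]

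This equivalence is precisely the composite of the restriction-of-cells functors induced by the two sketch morphisms in the statement, namely
\[
  \Mod((\Mod(\sk C), \colimit)) \hookrightarrow \Mod((\Mod(\sk C), (\Universe\Cat\slice C)-\colimit)) \longrightarrow \Mod(\sk C),
\]
where the first map is a full inclusion (passing to more relations carves out a full subcategory). By 2-out-of-3 for equivalences it suffices to show this first factor is essentially surjective, i.e.~that every presheaf $F: \Mod(\sk C)^\op \to \Space$ satisfying the $(\Universe\Cat\slice C)-\colimit$-continuity condition is in fact cocontinuous.

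To establish this, let $V: J \to \Mod(\sk C)$ be an arbitrary small diagram. By density of $\Yoneda_\sk C$ (Proposition \ref{dense/criterion}), each $V_j$ is canonically a colimit $V_j \cong \colim_{I_j}\Yoneda_\sk C \circ U_j$ over the small category $I_j \defeq \sk C\slice V_j$ with $U_j$ the source projection. Assembling these into a single small diagram $U: I \defeq \int_J I_j \to \sk C$ via the Grothendieck construction and using that the total colimit of a family of diagrams is computed as the colimit over the Grothendieck construction,
\[
  \colim_J V \;\cong\; \colim_J\colim_{I_j}\Yoneda_\sk C \circ U_j \;\cong\; \colim_I\Yoneda_\sk C \circ U
\]
in $\Mod(\sk C)$, exhibiting $\colim_J V$ as the apex of a $(\Universe\Cat\slice C)-\colimit$ cone. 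Applying the hypothesis on $F$ once to this total cone and once to each of the cones witnessing $V_j$, and using Fubini for limits, yields
\[
  F(\colim_J V) \;\cong\; \lim_I F(\Yoneda_\sk C\circ U) \;\cong\; \lim_J\lim_{I_j}F(\Yoneda_\sk C\circ U_j) \;\cong\; \lim_J F(V_j),
\]
which is the desired cocontinuity.

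Both factors of the displayed factorisation are therefore equivalences, proving that both sketch morphisms in the corollary are Morita equivalences. The main thing to watch is universe bookkeeping: one needs $\Universe$ large enough that $J$, each $I_j$, and hence $I = \int_J I_j$ are all $\Universe$-small, so that the rectified cones legitimately lie in the $(\Universe\Cat\slice C)-\colimit$ class and may be probed by the continuity hypothesis on $F$.
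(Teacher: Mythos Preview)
Your proof is correct. The overall structure---showing the composite restriction-of-cells functor is an equivalence via Proposition \ref{sketch/extension/existence}, then splitting this into two factors and arguing the inclusion factor is essentially surjective---matches the paper's implicit route through the preceding Corollary.

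The difference lies in how you handle the key step, namely that a presheaf on $\Mod(\sk C)$ which is continuous for the restricted class $(\Universe\Cat\slice C)\text{-}\colimit$ is automatically continuous for all colimits. The paper has already recorded this fact in \S\ref{sketch/module/relations}, where it is deduced from density of $\Yoneda_\sk C$ together with Proposition \ref{dense/extension} (equivalently, Proposition \ref{sketch/extension/criterion}): any functor preserving the canonical colimits $F\cong\colim_{C\slice F}\Yoneda_\sk C$ is forced to be the left Kan extension of its restriction to $C$, and such Kan extensions preserve all colimits. Your rectification argument via the comma category $C\downarrow_{\Mod(\sk C)}J$ (your $\int_J I_j$) is a direct, hands-on proof of this same implication in the special case $E=\Space^\op$. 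Both arguments are short; the paper's is slightly more conceptual (it identifies the functor as a Kan extension and invokes a general property of such), while yours is more explicit and self-contained, at the cost of redoing work already packaged in \S\ref{sketch/module/relations}. Your universe caveat is apt: one must take care that the rectified index category $I$ remains small for the relevant universe so that the cone genuinely lies in the restricted class.
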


Rephrasing Proposition \ref{dense/extension} to the present context, we recover useful characterisations of the extension:

\begin{proposition}[Characterisation of extensions]
\label{sketch/extension/criterion}

  The following conditions on a functor $f:\Mod(\sk C)\rightarrow D$ are equivalent:
  \begin{enumerate}
  
    \item \label{extension/criterion/kan}
      $f$ is a left Kan extension of its restriction to $C$, and its restriction to $C$ takes elements of $R$ to colimit cones in $D$.
      
    \item \label{extension/criterion/colimit}
      $f$ preserves small colimits.
      
    \item \label{extension/criterion/adjoint}
      (If $D$ is presentable) $f$ admits a right adjoint.
 
  \end{enumerate}
  Moreover, if $D$ admits small colimits then any functor $f:C\rightarrow D$ that maps elements of $R$ to colimit cones admits a left Kan extension to $\Mod(\sk C)$.
  
\end{proposition}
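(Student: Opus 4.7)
The plan is to deduce the equivalence from the universal property of $\Mod(\sk C)$ (Proposition \ref{sketch/extension/existence}) combined with the recognition principle for Kan extensions along dense functors (Proposition \ref{dense/extension}). The key preliminary observation is that the localised Yoneda functor $\Yoneda_\sk C$ itself sends every element of $R$ to a colimit cone in $\Mod(\sk C)$: by Proposition \ref{sketch/module/bousfield} the coboundary morphism $\colim_{i:I}\Yoneda_C(U_i)\rightarrow \Yoneda_C(U_\omega)$ in $\P(C)$ is a generator of the Bousfield localisation, hence is inverted by $L_\sk C$, which identifies $\Yoneda_\sk C(U_\omega)$ with the colimit of $\Yoneda_\sk C\circ U$ in $\Mod(\sk C)$.

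First I would establish \ref{extension/criterion/colimit}$\Rightarrow$\ref{extension/criterion/kan}. Recall that $\Yoneda_\sk C$ is dense by \ref{dense/properties/bousfield}, as it factors as the ordinary Yoneda followed by $L_\sk C$. If $f$ preserves small colimits, then $f|_C$ sends $R$ to colimit cones because $\Yoneda_\sk C$ does and $f$ preserves colimits. Moreover, $f$ is a strict, colimit-preserving extension of $f|_C$ along a dense functor, so the argument of Proposition \ref{dense/extension} (producing the pointwise formula $f(F)=\colim_{c:C\slice F}f|_C(c)$, which is legitimate because the right fibration $C\slice F$ is small whenever both $\sk C$ and $F$ are small) shows that $f$ is automatically the left Kan extension of $f|_C$ along $\Yoneda_\sk C$.

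For \ref{extension/criterion/kan}$\Rightarrow$\ref{extension/criterion/colimit} together with the final existence clause, I would invoke the universal property: given any $g:C\rightarrow D$ sending $R$ to colimit cones into a cocomplete $D$, Proposition \ref{sketch/extension/existence} furnishes a cocontinuous extension $\tilde g:\Mod(\sk C)\rightarrow D$. By the previous step $\tilde g$ is itself a left Kan extension of $g$ along $\Yoneda_\sk C$; uniqueness of Kan extensions (Proposition \ref{kan-extension/exist-unique}) then both confirms existence of the LKE and identifies any $f$ satisfying \ref{extension/criterion/kan} with $\tilde g$, whence such an $f$ preserves colimits. Finally, \ref{extension/criterion/colimit}$\Leftrightarrow$\ref{extension/criterion/adjoint} for presentable $D$ is the standard adjoint functor theorem \pathcite{htt/presentable/prl-prr}, since $\Mod(\sk C)$ is presentable by Proposition \ref{sketch/module/bousfield}. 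The main point requiring care throughout is the set-theoretic bookkeeping making the pointwise Kan extension formula legitimate in the direction \ref{extension/criterion/colimit}$\Rightarrow$\ref{extension/criterion/kan}; existence in the other direction is then obtained not from that formula but from the universal property itself, circumventing any direct handling of large slice categories.
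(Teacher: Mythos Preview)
Your argument is correct and follows essentially the same route as the paper: the paper's proof is the terse ``By Proposition \ref{dense/extension} and the adjoint functor theorem,'' and you have supplied the details --- using density of $\Yoneda_\sk C$ plus Proposition \ref{dense/extension} for \ref{extension/criterion/colimit}$\Rightarrow$\ref{extension/criterion/kan}, the universal property \ref{sketch/extension/existence} together with uniqueness of Kan extensions for \ref{extension/criterion/kan}$\Rightarrow$\ref{extension/criterion/colimit} and the existence clause, and the adjoint functor theorem for \ref{extension/criterion/colimit}$\Leftrightarrow$\ref{extension/criterion/adjoint}. Your explicit verification that $\Yoneda_\sk C$ carries $R$ to colimit cones is a detail the paper leaves implicit.
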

\begin{proof}

  By Proposition \ref{dense/extension} and the adjoint functor theorem. \qedhere

\end{proof}

\begin{para}[Modules as a functor] \label{sketch/modules/as-functor}

  By stability of $R$ under pullback, taking modules over small sketches defines a subfunctor
  \[
    \Sketch^\op \rightarrow \mathrm{Pr^R}\Cat
  \]
  of $\P$, hence by \pathcite{htt/presentable/prl-prr} a functor
  \[
    \Mod:\Sketch \rightarrow \PrL\Cat
  \]
  taking maps of sketches to their unique colimit-preserving extensions.
  
\end{para}

This functor takes exterior products to tensor products:

\begin{proposition} \label{sketch/modules/tensor-product}

  The category of modules of an exterior product of sketches is equivalent to the presentable tensor product of their categories of modules.
  
\end{proposition}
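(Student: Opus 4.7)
The plan is to identify both sides of the proposed equivalence by checking they corepresent the same functor on $\PrL$. Fixing a universe $\Universe$ with respect to which all the relevant categories are presentable, I will compute $\Fun^L(-, E)$ for an arbitrary presentable $E$ in two ways.

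First, by the defining universal property of the presentable tensor product, cocontinuous functors $\Mod(\sk C)\otimes\Mod(\sk D)\rightarrow E$ correspond to functors $\Mod(\sk C)\times\Mod(\sk D)\rightarrow E$ that preserve small colimits separately in each variable. Using Proposition \ref{sketch/extension/existence} iteratively --- first applied to each fixed $n:\Mod(\sk D)$ to replace the first factor by $\sk C$, then applied in the second variable after transposing --- such bilinear functors correspond to functors $C\times D\rightarrow E$ which, with one variable held fixed at any object, take the relations of $R(\sk C)$ in the $C$-slot and the relations of $R(\sk D)$ in the $D$-slot to colimit cones in $E$.

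Second, by the definition of the exterior product of sketches \eqref{sketch/exterior-product}, the set $R(\sk C\boxtimes\sk D)$ consists precisely of the diagrams of the form $(U,\underline{d})$ and $(\underline{c},V)$ for $U\in R(\sk C)$, $V\in R(\sk D)$, $c:C$, $d:D$. Hence a functor $C\times D\rightarrow E$ satisfies the condition from the previous paragraph if and only if it defines a morphism of sketches $\sk C\boxtimes\sk D\rightarrow (E\sep\colimit)$. Applying Proposition \ref{sketch/extension/existence} one more time, these are equivalent to cocontinuous functors $\Mod(\sk C\boxtimes\sk D)\rightarrow E$. Chaining these equivalences and invoking the Yoneda lemma in $\PrL$ yields the proposition.

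The main subtlety is the iteration of Proposition \ref{sketch/extension/existence} in the first step: passing from bilinear functors $\Mod(\sk C)\times\Mod(\sk D)\rightarrow E$ to functors $C\times D\rightarrow E$ with the appropriate relation-preserving property requires that at each stage the intermediate functor category inherits the needed cocompleteness and that the extension in one variable is compatible (natural) in the other. This follows from the fact that for any cocomplete $E$, the category $\Fun^L(\Mod(\sk D),E)$ is itself presentable and inherits colimits pointwise, so that the universal property of $\Mod(\sk C)$ can be applied internally to $\Fun^L(\Mod(\sk D),E)$; alternatively, one may package the iterated extension into a single application of the universal property of $\P(C\times D)$ together with a factorisation through the Bousfield localisations that cut out $\Mod(\sk C\boxtimes\sk D)$, using that the set of coboundary morphisms generating the localisation $\P(C\times D)\rightarrow\Mod(\sk C\boxtimes\sk D)$ \eqref{sketch/module/bousfield} is exactly the union of the coboundaries for $R(\sk C)$ tensored with representables on $D$ and vice versa.
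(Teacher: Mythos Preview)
Your proof is correct and in spirit the same as the paper's: both identify $\Mod(\sk C\boxtimes\sk D)$ and $\Mod(\sk C)\otimes\Mod(\sk D)$ by showing they corepresent the same functor --- separately colimit-preserving bifunctors --- on cocomplete categories.

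The organisational difference is that the paper avoids the iteration of Proposition~\ref{sketch/extension/existence} (and the attendant bookkeeping you flag in your final paragraph) by invoking the earlier remark \eqref{sketch/module/product} that exterior products preserve Morita equivalences. Since $\sk C\rightarrow(\Mod(\sk C),\colimit)$ and $\sk D\rightarrow(\Mod(\sk D),\colimit)$ are Morita equivalences, so is $\sk C\boxtimes\sk D\rightarrow\Mod(\sk C)\boxtimes\Mod(\sk D)$, and one is left only to observe that morphisms of sketches $\Mod(\sk C)\boxtimes\Mod(\sk D)\rightarrow(E,\colimit)$ are by definition separately cocontinuous bifunctors. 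This packages your two-variable iteration into a single Morita-invariance step, which is a bit cleaner; your direct iteration, on the other hand, does not rely on having established \eqref{sketch/module/product} beforehand and makes the reduction to $C\times D$ explicit. One minor correction: for the iteration you only need the intermediate category $\Fun^L(\Mod(\sk D),E)$ to be \emph{cocomplete} (which it is, with pointwise colimits), not presentable, since that is all Proposition~\ref{sketch/extension/existence} requires of its target.
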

\begin{proof}

  Since box products preserve Morita equivalences \eqref{sketch/module/product}, $\sk C\boxtimes\sk D\rightarrow \Mod(\sk C)\boxtimes\Mod(\sk D)$ is a Morita equivalence, whence
  \begin{align*}
    \Fun^{cc}(\Mod(\sk C\boxtimes\sk D)\sep E_\all) &\cong \Fun(\Mod(\sk C)\boxtimes\Mod(\sk C)\sep E_\all) \\
    &\cong \Fun(\Mod(\sk C)_\all\boxtimes\Mod(\sk D)_\all\sep E_\all)
  \end{align*}
  is the category of bifunctors of $\Mod(\sk C)\times \Mod(\sk D)$ that preserve small colimits in each variable separately.
  That is, it is the category of binary operations in the operad $\Cat_\infty^\otimes$ defined in \cite[Not.~4.8.1.2]{HA}.  
  \qedhere
  
\end{proof}

%%%%%%%%%%%%%%%%%%%%%%%%%%%%%%%%%%%%%%%%%%%%%%%%%%%%%%%%%%%%%%%%%%%%%%%%%%%%%%

\subsection{Essential adjunctions} \label{essential/}

A morphism of small sketches $f:\sk C\rightarrow \sk D$ always induces an adjunction $\Mod(\sk C)\rightleftarrows\Mod(\sk D)$, and the adjunction data is obtained by left Kan extension along the Yoneda functor (by \eqref{yoneda/pullback/as-kan-extension}).
Here we discuss criteria for the existence of an extra right adjoint to this adjunction.
This situation arises, for example, in topos theory, where a further left adjoint to the push-pull adjunction of a geometric morphism computes some kind of relative homotopy type.

\begin{definition}[Essential functor] \label{essential/definition}

  A left adjoint functor between presentable categories is said to be \emph{left essential} if its right adjoint itself has a right adjoint.
  By the adjoint functor theorem, this is equivalent to admitting a colimit-preserving right adjoint.
  
\end{definition}

\begin{example}

  Left essential functors $\Space\rightarrow \P(C)$ are one-to-one with presheaves which can be expressed as retracts of representable presheaves (i.e.~what Lurie calls `completely compact' objects \cite[\S5.1.6]{HTT}).
  
\end{example}

\begin{example}

  Let $f^*:T\rightleftarrows T':f_*$ be an essential geometric morphism of topoi. Then the left adjoint $f_!$ to pullback is a left essential functor (while $f_*$ is what we would call \emph{right} essential if we cared to discuss such things).
  
\end{example}

\begin{proposition}[Criteria for extra adjoints] \label{essential/criteria}

  Let $f:\sk C\rightarrow\sk D$ be a functor of sketches.
  The following conditions are equivalent:
  \begin{enumerate}
  
    \item \label{essential/criteria/beck-chevalley}
      The localisation square
      \[
        \begin{tikzcd}
          \P(C) \ar[r, "f^*"] \ar[d, "L_\sk C"] & \P(D) \ar[d, "L_\sk D"] \\
          \Mod(\sk C) \ar[r, "f^*"] & \Mod(\sk D)
        \end{tikzcd}
      \]
      is Beck-Chevalley.
      
    \item \label{essential/criteria/descent}
      The composite $L_\sk C\circ f^*:\P(D)\rightarrow\Mod(\sk C)$ inverts $R(\sk D)$-equivalences.
      
    \item \label{essential/criteria/relation}
      The composite $L_\sk C\circ f^\dagger:D\rightarrow \Mod(\sk C)$ takes $R(\sk D)$-cones to colimits.
    
    \item \label{essential/criteria/extra-adjoint}
      The restriction of cells functor $f^*:\Mod(\sk D)\rightarrow\Mod(\sk C)$ preserves colimits.
      
    \item \label{essential/criteria/essential}
      The extension $f_!:\Mod(\sk C)\rightarrow\Mod(\sk D)$ is left essential.
      
  \end{enumerate}
  
\end{proposition}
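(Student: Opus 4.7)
The plan is to prove the five conditions are equivalent through the chain: (4) $\Leftrightarrow$ (5) by the adjoint functor theorem, (2) $\Leftrightarrow$ (3) via the cocontinuous generation of $R(\sk D)$-equivalences by coboundary maps, (1) $\Leftrightarrow$ (2) by a Beck-Chevalley reformulation, and (2) $\Leftrightarrow$ (5) by the standard correspondence between factorisations through a Bousfield localisation and the right adjoint landing in the local subcategory. The first of these is immediate: $f^*|_\Mod$ is the right adjoint of $f_!|_\Mod$ between presentable categories, so by the adjoint functor theorem it admits a further right adjoint if and only if it preserves small colimits. For (2) $\Leftrightarrow$ (3), Proposition \ref{sketch/module/bousfield} presents the $R(\sk D)$-equivalences as the strongly saturated class generated by the coboundary morphisms $\colim_I U_i \to U_\omega$; since $L_\sk C \circ f^*$ is cocontinuous it inverts the class iff it inverts these generators, and using cocontinuity of $f^*$ and $L_\sk C$ this reduces to the condition that $\colim_I L_\sk C f^\dagger U_i \to L_\sk C f^\dagger U_\omega$ be invertible for each $(I, U) \in R(\sk D)$, which is (3).

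For (1) $\Leftrightarrow$ (2), I will note that since $f$ sends relations to relations, $f^*$ of any $R(\sk D)$-continuous presheaf is $R(\sk C)$-continuous, giving the identity $i_\sk C f^*|_\Mod = f^* i_\sk D$. Applying $L_\sk C$ to the Bousfield unit $\eta: \id \to i_\sk D L_\sk D$ yields the canonical Beck-Chevalley transformation $\tau: L_\sk C f^* \to f^*|_\Mod L_\sk D$. Condition (1) asserts $\tau$ is a natural isomorphism, equivalently that $L_\sk C f^*$ inverts every Bousfield unit $\eta_F$; since these units generate all $R(\sk D)$-equivalences via the naturality square $\eta_Y \circ \alpha = i_\sk D L_\sk D \alpha \circ \eta_X$ (in which $i_\sk D L_\sk D \alpha$ is invertible whenever $\alpha$ is an $R(\sk D)$-equivalence) together with closure of inverted maps under 2-out-of-3, this is equivalent to (2).

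The substantial content is (2) $\Leftrightarrow$ (5). Composing $f^* \dashv f_*$ with $L_\sk C \dashv i_\sk C$ yields the adjunction $L_\sk C f^* \dashv f_* i_\sk C$ between $\P(D)$ and $\Mod(\sk C)$. The universal property of the Bousfield localisation $L_\sk D$ gives that a colimit-preserving functor on $\P(D)$ factors through $L_\sk D$ if and only if its right adjoint takes values in the local subcategory $\Mod(\sk D)$. Thus (2) is equivalent to $f_* i_\sk C$ having image in $\Mod(\sk D)$; in that case, the corestriction combined with the identity $i_\sk C f^*|_\Mod = f^* i_\sk D$ supplies a right adjoint $g$ to $f^*|_\Mod$, giving (5). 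For the converse direction, starting from a right adjoint $g$ to $f^*|_\Mod$, the chain of natural equivalences
\[
  \Map_{\P(D)}(i_\sk D F, f_* i_\sk C G) = \Map_{\Mod(\sk C)}(f^*|_\Mod F, G) = \Map_{\Mod(\sk D)}(F, gG) = \Map_{\P(D)}(i_\sk D F, i_\sk D gG)
\]
for $F \in \Mod(\sk D)$ must be used to identify $f_* i_\sk C G$ with $i_\sk D gG$ in $\P(D)$. The main obstacle is precisely this last identification: one must argue, via Yoneda applied to the local generators of $\Mod(\sk D)$ together with the presentability of the categories in play, that equality of mapping spaces from all local objects forces equality as presheaves on $D$, so that $f_* i_\sk C$ already lands in the local subcategory.
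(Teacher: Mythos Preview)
Your arguments for $(1)\Leftrightarrow(2)$, $(2)\Leftrightarrow(3)$, and $(4)\Leftrightarrow(5)$ are correct and match the paper's reasoning closely. The difference lies in how you bridge the two blocks $\{(1),(2),(3)\}$ and $\{(4),(5)\}$: the paper goes through $(3)\Leftrightarrow(4)$ via Proposition~\ref{sketch/extension/criterion}, while you attempt $(2)\Leftrightarrow(5)$ directly.

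Your forward direction $(2)\Rightarrow(5)$ is fine: once $L_{\sk C}f^*$ factors through $L_{\sk D}$, the right adjoint $f_*i_{\sk C}$ lands in $\Mod(\sk D)$, and its corestriction supplies the extra right adjoint. But your converse $(5)\Rightarrow(2)$ has a genuine gap, and your proposed repair does not work. You show that $f_*i_{\sk C}G$ and $i_{\sk D}gG$ receive the same maps from every \emph{local} object $i_{\sk D}F$, and then assert that ``equality of mapping spaces from all local objects forces equality as presheaves on $D$.'' This is false: the local objects form a \emph{reflective} (not coreflective) subcategory of $\P(D)$, so they detect objects via maps \emph{into} them, not \emph{out of} them. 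Two distinct objects of $\P(D)$ can receive identical maps from all local objects without being isomorphic, so nothing here forces $f_*i_{\sk C}G$ to be local.

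The paper sidesteps this entirely. It observes that $f^*|_{\Mod}$ is \emph{always} the left Kan extension of $L_{\sk C}\circ f^\dagger$ along $\Yoneda_{\sk D}$: for $F\in\Mod(\sk D)$ one computes
\[
  (\Yoneda_{\sk D})_!(L_{\sk C}f^\dagger)(F)
  \;=\;\colim_{D/F}L_{\sk C}f^\dagger
  \;=\;L_{\sk C}f^*(i_{\sk D}F)
  \;=\;f^*|_{\Mod}(F),
\]
using that $D\slice F$ computed via $\Yoneda_{\sk D}$ agrees with $D\slice i_{\sk D}F$ computed via $\Yoneda_D$. Proposition~\ref{sketch/extension/criterion} then says that such a left Kan extension preserves colimits precisely when the functor being extended takes $R(\sk D)$-cones to colimits. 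This is exactly $(3)\Leftrightarrow(4)$, and it closes the loop without any need to compare $f_*i_{\sk C}$ with $i_{\sk D}g$ inside $\P(D)$.
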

\begin{proof}

  \emph{\ref{essential/criteria/beck-chevalley}$\Leftrightarrow$\ref{essential/criteria/descent}} by the universal property of the localisation $L_\sk D$.
  \emph{\ref{essential/criteria/descent}$\Leftrightarrow$\ref{essential/criteria/relation}} because $L_\sk C\circ f^*$ preserves colimits and $R$-cones generate $R$-equivalences as a Bousfield class.
  \emph{\ref{essential/criteria/relation}$\Leftrightarrow$\ref{essential/criteria/extra-adjoint}} by Proposition \ref{sketch/extension/criterion}, because $f^*$ is a left Kan extension of $L_\sk C\circ f^\dagger$ along $\Yoneda_\sk D$.
  \emph{\ref{essential/criteria/extra-adjoint}$\Leftrightarrow$\ref{essential/criteria/essential}} by the adjoint functor theorem.
  \qedhere
  
\end{proof}

\begin{definition}[Essential morphism of sketches] \label{essential/sketch/definition}

  A functor $f:\sk C\rightarrow \sk D$ between sketches is said to be \emph{(left) essential} if it satisfies the equivalent conditions of Propositio \ref{essential/criteria}.

\end{definition}

\begin{comment}
\begin{proof}

  The statement is true for $R(\sk C)=\emptyset$ by \pathcite{htt/presheaf/map-out/right-adjoint/is-pullback}.
  %
  We note that this pullback preserves colimits, hence it is the left Kan extension.
  
  For general $R$, the hypotheses imply that the left Kan extensions of $f$ and $f^\dagger$ preserve $R$-equivalences of presheaves.
  %
  Hence, the adjunction localises by \pathcite{htt/bousfield-localization/map-out}.
  
  For the converse statement, we use the characterisation \eqref{extension/criterion/colimit} of Proposition \ref{sketch/extension/criterion}.
  \qedhere

\end{proof}
\end{comment}

\begin{corollary} [Essential functors and indexed localisers] \label{essential/localizer}

  If $f:\sk C\rightarrow \sk D$ is an essential functor of sketches, then the restriction of cells and comma category construction fit into commuting squares
  \[
    \begin{tikzcd}
      \Cat\slice D \ar[r, "C\downarrow_D(-)"] \ar[d] & \Cat\slice C \ar[d] \\
      \Mod(\sk D) \ar[r, "f^*"] & \Mod(\sk C)
    \end{tikzcd}
  \]
  In particular, $C\downarrow_DW_\sk D \subseteq W_\sk C$.

\end{corollary}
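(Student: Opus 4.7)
The plan is to factor the classifying module functor as $|-|_\sk C = L_\sk C\circ|-|_C$, using the presheaf-level classifying construction of Corollary \ref{diagram/presents-presheaves} followed by the Bousfield localisation of Proposition \ref{sketch/module/bousfield}, and then reduce the commutativity of the square to two compatibilities already established.

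First, at the level of presheaves, Proposition \ref{shape/pullback}-\ref{shape/pullback/lax} provides a natural equivalence $|C\downarrow_D V|_C \cong f^*|V|_D$ for any diagram $V:J\rightarrow D$. Post-composing with the localisation $L_\sk C$ yields
\[
  |C\downarrow_D V|_\sk C \;\cong\; L_\sk C f^*|V|_D.
\]
Now invoke essentiality of $f$ in the form of the Beck-Chevalley equality $L_\sk C\circ f^* \cong f^*\circ L_\sk D$, which is precisely criterion \ref{essential/criteria/beck-chevalley} of Proposition \ref{essential/criteria}. Applying this to $|V|_D$ gives $L_\sk C f^*|V|_D \cong f^* L_\sk D|V|_D = f^*|V|_\sk D$, and composing these two natural equivalences produces the desired commuting square. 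The last assertion $C\downarrow_D W_\sk D\subseteq W_\sk C$ is then immediate: if $h$ is an $R(\sk D)$-equivalence of diagrams, then $|h|_\sk D$ is invertible in $\Mod(\sk D)$, hence $f^*|h|_\sk D \cong |C\downarrow_D h|_\sk C$ is invertible, so $C\downarrow_D h\in W_\sk C$.

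There is no real obstacle: all the substantive work has already been done, in Proposition \ref{shape/pullback} (compatibility of presheaf-level classifying functors with lax pullback along arbitrary functors) and in Proposition \ref{essential/criteria} (which defines essentiality via exactly the Beck-Chevalley identity we need). The corollary is simply the composite of these two squares, with the factorisation $|-|_\sk C = L_\sk C\circ|-|_C$ serving as the bridge.
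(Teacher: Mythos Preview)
Your proof is correct and is precisely the argument the paper has in mind: the corollary is stated without proof because it is the straightforward composite of the square from Proposition \ref{shape/pullback}-\ref{shape/pullback/lax} (comma construction models $f^*$ at the presheaf level) with the Beck--Chevalley square of Proposition \ref{essential/criteria}-\ref{essential/criteria/beck-chevalley}, glued along the factorisation $|-|_\sk C = L_\sk C\circ|-|_C$. There is nothing to add.
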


\begin{corollary}

  Morita equivalences are essential.
  
\end{corollary}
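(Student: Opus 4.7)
The plan is to invoke Proposition \ref{essential/criteria} directly: among the equivalent characterisations of essentiality of $f:\sk C\rightarrow\sk D$ is condition \ref{essential/criteria/extra-adjoint}, namely that the restriction-of-cells functor $f^*:\Mod(\sk D)\rightarrow\Mod(\sk C)$ preserves colimits. If $f$ is a Morita equivalence, then by Definition \ref{sketch/morita-equivalence/definition} the functor $f^*$ is an equivalence of categories, and equivalences preserve (and reflect) all limits and colimits. Hence $f^*$ in particular preserves colimits, so $f$ is essential.

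More explicitly, I would first observe that if $f^*$ is an equivalence, it admits a right adjoint (its inverse) which itself is an equivalence and hence admits a further right adjoint (again, the original equivalence); thus the right adjoint to $f^*$ itself has a right adjoint. Translating this up one level, $f_!:\Mod(\sk C)\rightarrow\Mod(\sk D)$ is inverse to $f^*$, and its right adjoint $f^*$ itself has a right adjoint, so $f_!$ is left essential in the sense of Definition \ref{essential/definition}. This is exactly condition \ref{essential/criteria/essential} of Proposition \ref{essential/criteria}, so $f$ is essential.

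There is no real obstacle here — the corollary is a one-line consequence of the existence of the right adjoint $f_! = (f^*)^{-1}$, which itself has a right adjoint because an equivalence in the presentable setting has adjoints on both sides. The only thing to note is that we are invoking Proposition \ref{essential/criteria} with the universe chosen so that both $\sk C$ and $\sk D$ are small and $f$ is a Morita equivalence in the sense of Definition \ref{sketch/morita-equivalence/definition}; this is compatible with the typical ambiguity conventions since the conditions in \ref{essential/criteria} are resolvable against a fixed universe.
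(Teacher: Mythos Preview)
Your proof is correct and matches the paper's intent: the paper states this corollary without proof, treating it as an immediate consequence of Proposition \ref{essential/criteria} and the definition of Morita equivalence, which is exactly what you do.
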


\begin{proposition}[Fully faithful essential functors] \label{essential/fully-faithful}

  Let $f:\sk C\rightarrow \sk D$ be an essential morphism of sketches.
  Then $f_!:\Mod(\sk C)\rightarrow\Mod(\sk D)$ is fully faithful if and only if $\Yoneda_\sk C\rightarrow f^\dagger f$ is invertible.
  
\end{proposition}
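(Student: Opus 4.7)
The plan is to reduce the statement to the general fact that a left adjoint is fully faithful if and only if its unit is invertible, and then to detect invertibility of the unit on arbitrary modules by checking it on representables, using density of $\Yoneda_\sk C$.

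First I would identify the map $\Yoneda_\sk C\to f^\dagger f$ in the statement with the restriction along $\Yoneda_\sk C$ of the adjunction unit $\eta:\id_{\Mod(\sk C)}\to f^*f_!$. The mapping space calculation
\[
  \Map_{\Mod(\sk D)}(f_!\Yoneda_\sk C(c)\sep M) \cong \Map_{\Mod(\sk C)}(\Yoneda_\sk C(c)\sep f^*M) \cong M(f(c)),
\]
together with the Yoneda lemma on $\Mod(\sk D)$, yields a canonical isomorphism $f_!\Yoneda_\sk C\cong \Yoneda_\sk D\circ f$. Composing with $f^*$ gives $f^*f_!\Yoneda_\sk C\cong f^*\Yoneda_\sk D f = f^\dagger f$, under which $\eta\circ \Yoneda_\sk C$ becomes the canonical natural transformation of the statement. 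The forward implication then follows: if $f_!$ is fully faithful, $\eta$ is invertible, so in particular $\eta\circ\Yoneda_\sk C$ is.

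For the converse, essentialness enters crucially. By Proposition~\ref{essential/criteria}-\ref{essential/criteria/extra-adjoint}, $f^*$ preserves colimits; being a left adjoint, $f_!$ does too; so $\id_{\Mod(\sk C)}$ and $f^*f_!$ are both colimit-preserving endofunctors of $\Mod(\sk C)$, and $\eta$ is a natural transformation between them. Density of $\Yoneda_\sk C$ (Proposition~\ref{dense/properties}-\ref{dense/properties/bousfield}, applied to $\Yoneda_C$ followed by $L_\sk C$) expresses every module as a colimit $M\cong\colim_{c:\sk C\slice M}\Yoneda_\sk C(c)$, preserved by both endofunctors. Hence $\eta_M$ is a colimit of the maps $\eta_{\Yoneda_\sk C(c)}$, and invertibility of these (by hypothesis, via the identification above) forces $\eta_M$ to be invertible for every $M$, so $f_!$ is fully faithful.

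The main conceptual step is the density-based detection of invertibility; the only subtlety worth spelling out is the coherence of the identification $f_!\Yoneda_\sk C\cong \Yoneda_\sk D f$, which ensures that the restricted unit agrees with the named map $\Yoneda_\sk C\to f^\dagger f$. This coherence follows from the uniqueness clauses in Propositions~\ref{dense/extension} and \ref{sketch/extension/existence}, which characterise $f_!$ as the essentially unique colimit-preserving extension of $\Yoneda_\sk D\circ f$ along $\Yoneda_\sk C$.
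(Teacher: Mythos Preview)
Your proof is correct and follows the same strategy as the paper's: use essentialness to conclude that $f^*f_!$ preserves colimits, then invoke density of $\Yoneda_\sk C$ to reduce invertibility of the unit to its value on representables. The paper's proof is a one-liner stating exactly this; you have additionally spelled out the forward direction and the identification of $\eta\circ\Yoneda_\sk C$ with the named map $\Yoneda_\sk C\to f^\dagger f$, which the paper leaves implicit.
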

\begin{proof}

  Since $f$ is essential, $f^*f_!:\Mod(\sk C)\rightarrow \Mod(\sk C)$ preserves colimits, whence the invertibility of the unit $\id_{\Mod(\sk C)}\rightarrow f^*f_!$ can be checked after restricting along the dense functor $\Yoneda_\sk C$. \qedhere

\end{proof}

\begin{example}[Essential, subcanonical extension]

  Let $j:\sk C\hookrightarrow \sk C\cup R'$ be an extension which is essential as a morphism of sketches.
  Suppose that $\sk C$ is subcanonical, whence $j^\dagger = \Yoneda_\sk C = \Yoneda_C$.
  In particular, $j^\dagger$ is dense, whence its left Kan extension $j^\dagger:\Mod(\sk C\cup R')\rightarrow\Mod(\sk C)$ is essentially surjective.
  Since it is also fully faithful, it is an equivalence; i.e.~$j$ is a Morita equivalence.
  
\end{example}

%%%%%%%%%%%%%%%%%%%%%%%%%%%%%%%%%%%%%%%%%%%%%%%%%%%%%%%%%%%%%%%%%%%%%%%%%%%%%%

\subsection{Saturation} \label{saturation/}

Morita equivalence defines an equivalence relation on the lattice of sketches on a fixed category $C$.
Here we make general statements about the maximal, or \emph{saturated}, representatives for each class.
Unfortunately, these saturated sets of diagrams are large and complicated, and it seems to be difficult to identify practical necessary and sufficient criteria for membership thereof.

\begin{lemma}[Acyclicity] \label{acyclic-cone/criteria}

  Let $U:I\rightarrow C\slice U_\omega$ be a conical diagram in a sketch $\sk C$. The following are equivalent:
  \begin{enumerate}
  
    \item \label{acyclic-cone/criteria/continuity}
      For every $F:\Mod(\sk C)$, $F\circ U:(I^\triangleright)^\op\rightarrow \Space$ is a limit cone. 
      
    \item \label{acyclic-cone/criteria/equivalence}
      The coboundary map $d_U:|U|_C\rightarrow U_\omega$ in $\P(C)$ is an $R$-equivalence.
    
    \item \label{acyclic-cone/criteria/colimit}
      The composite $\Yoneda_\sk C\circ U$ is a colimit cone in $\Mod(\sk C)$.
  \end{enumerate}

\end{lemma}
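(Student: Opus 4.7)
The plan is to establish \ref{acyclic-cone/criteria/continuity}$\Leftrightarrow$\ref{acyclic-cone/criteria/equivalence} directly from the Yoneda lemma and then \ref{acyclic-cone/criteria/equivalence}$\Leftrightarrow$\ref{acyclic-cone/criteria/colimit} from the cocontinuity of the localisation $L_\sk C:\P(C)\rightarrow\Mod(\sk C)$. The whole argument is essentially a repackaging of the equivalence ``\ref{module/criterion/explicit}$\Leftrightarrow$\ref{module/criterion/local}'' from the proof of Proposition \ref{sketch/module/criterion}, promoted from individual relations to arbitrary conical diagrams.

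For \ref{acyclic-cone/criteria/continuity}$\Leftrightarrow$\ref{acyclic-cone/criteria/equivalence}, I would apply the extended Yoneda lemma \pathcite{cis/yoneda-lemma} to rewrite $F(U_j)\cong\Nat(\Yoneda_C(U_j),F)$ for each $j:I^\triangleright$. Then $F\circ U$ being a limit cone in $\Space$ translates, via the universal property of the colimit $|U|_C = \colim_{i:I}\Yoneda_C(U_i)$ in $\P(C)$, to the condition that $\Nat(U_\omega,F)\rightarrow\Nat(|U|_C,F)$ is an equivalence. Requiring this for all $F:\Mod(\sk C)$ is precisely the definition that the coboundary map $d_U:|U|_C\rightarrow U_\omega$ is an $R$-equivalence in $\P(C)$.

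For \ref{acyclic-cone/criteria/equivalence}$\Leftrightarrow$\ref{acyclic-cone/criteria/colimit}, I would exploit that $L_\sk C$ preserves colimits (Proposition \ref{sketch/module/bousfield}) and that $\Yoneda_\sk C = L_\sk C\circ\Yoneda_C$ by construction \eqref{sketch/yoneda/definition}. Applying $L_\sk C$ to the tautological colimit cone $\Yoneda_C\circ U\rightarrow |U|_C$ in $\P(C)$ yields a colimit cone $\Yoneda_\sk C\circ U\rightarrow L_\sk C|U|_C$ in $\Mod(\sk C)$. The cone $\Yoneda_\sk C\circ U:I^\triangleright\rightarrow\Mod(\sk C)$ factors this tautological colimit through $L_\sk C(d_U):L_\sk C|U|_C\rightarrow\Yoneda_\sk C(U_\omega)$; hence it is a colimit cone if and only if $L_\sk C(d_U)$ is invertible, i.e.~if and only if $d_U$ is an $R$-equivalence.

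The argument is more or less forced by the setup, so I do not anticipate a real obstacle. The main thing to be careful about is bookkeeping around the distinction between $d_U$ as a morphism in $\P(C)$ and the induced map of cones in $\Mod(\sk C)$; this is a one-line consequence of the cocontinuity of $L_\sk C$ but worth spelling out to make the factorisation in the last step unambiguous.
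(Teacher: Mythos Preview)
Your proposal is correct and follows essentially the same approach as the paper. The paper's proof of \ref{acyclic-cone/criteria/continuity}$\Leftrightarrow$\ref{acyclic-cone/criteria/equivalence} is identical to yours (Yoneda plus the identification $\lim_{i:I}F(U_i)=\Map(|U|_C,F)$), and for \ref{acyclic-cone/criteria/equivalence}$\Leftrightarrow$\ref{acyclic-cone/criteria/colimit} the paper simply writes ``follows straight from definitions,'' whereas you helpfully unpack this via the cocontinuity of $L_\sk C$ and the factorisation through $L_\sk C(d_U)$.
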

\begin{proof}

  We have $\lim_{i:I}F(U_i)=\lim_{i:I}\Map(U_i,F) = \Map(|U|_C,F)$ in $\P(C)$.
  Hence \eqref{acyclic-cone/criteria/continuity} holds if and only if every $R$-continuous presheaf is $d_U$-local, i.e.~\eqref{acyclic-cone/criteria/equivalence}.
  The latter equivalence \eqref{acyclic-cone/criteria/equivalence}$\Leftrightarrow$\eqref{acyclic-cone/criteria/colimit} follows straight from definitions. \qedhere
  
\end{proof}

\begin{definition}[$R$-acyclic cone] \label{acyclic-cone/definition}

  A conical diagram in a colimit-sketch $\sk C$ is said to be an \emph{acyclic cone}, or \emph{$R(\sk C)$-acyclic cone}, if it satisfies the conditions of Lemma \ref{acyclic-cone/criteria}.
  The set of small acyclic cones in $\sk C$ is denoted $R-\mathtt{acy}(\sk C)$.

\end{definition}

\begin{lemma} \label{morita-trivial-extension/criterion}

  An extension $\sk C\hookrightarrow \sk C \cup R'$ is Morita-trivial if and only if every element of $R'$ is $R(\sk C)$-acyclic.
  
\end{lemma}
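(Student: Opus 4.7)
The plan is to reduce the statement to a direct application of the acyclicity criterion \ref{acyclic-cone/criteria} by unwinding what ``Morita trivial'' means for an extension.

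First I would observe that since $R(\sk C \cup R') = R(\sk C) \cup R'$ strictly contains $R(\sk C)$, the criterion \ref{module/criterion/explicit} of Proposition \ref{sketch/module/criterion} shows that $(R\cup R')$-continuity is a stronger condition than $R$-continuity; hence the restriction of cells functor $j^\ast : \Mod(\sk C\cup R') \to \Mod(\sk C)$ is simply the inclusion of a full subcategory. Morita-triviality of $j$ is therefore equivalent to this inclusion being an equivalence, which in turn is equivalent to the set-theoretic assertion that every $R$-continuous presheaf on $C$ is automatically $R'$-continuous.

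Next I unwind each side through Lemma \ref{acyclic-cone/criteria}. On the one hand, ``every $F \in \Mod(\sk C)$ is $R'$-continuous'' means, by criterion \ref{module/criterion/explicit}, that for every $U \in R'$ and every $F \in \Mod(\sk C)$, the composite $F \circ U$ is a limit cone in $\Space$. On the other hand, this is exactly condition \ref{acyclic-cone/criteria/continuity} of Lemma \ref{acyclic-cone/criteria} applied to $U \in R'$, which says that $U$ is $R(\sk C)$-acyclic. Quantifying over $U \in R'$, these two conditions are manifestly equivalent.

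Combining the two reductions yields the lemma. No real obstacle is anticipated; the content is entirely encapsulated in the equivalence \ref{acyclic-cone/criteria/continuity}$\Leftrightarrow$\ref{acyclic-cone/criteria/equivalence} of Lemma \ref{acyclic-cone/criteria} together with the definition of $\Mod$ as the subcategory of presheaves cutting out continuity for the declared relations.
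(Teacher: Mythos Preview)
Your proposal is correct and follows essentially the same approach as the paper: both arguments identify Morita-triviality of the extension with the equality $\Mod(\sk C)=\Mod(\sk C\cup R')$ as full subcategories of $\P(C)$, and then invoke criterion~\ref{acyclic-cone/criteria/continuity} of Lemma~\ref{acyclic-cone/criteria} to rewrite this as $R(\sk C)$-acyclicity of each element of $R'$. Your write-up is simply a more detailed unpacking of the paper's one-line proof.
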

\begin{proof}

  By definition, Morita-triviality of the extension implies $\Mod(\sk C) = \Mod(\sk C\cup R')$ as full subcategories of $\P(C)$, i.e.~if $R(\sk C)$-continuous presheaves take elements of $R'$ to limits (criterion \eqref{acyclic-cone/criteria/continuity} of Lemma \ref{acyclic-cone/criteria}).
  \qedhere

\end{proof}

\begin{lemma}[Cofinal closure] \label{acyclic-cone/cofinal}

  Let $h:(I\sep U) \rightarrow (J\sep V)$ be a cofinal map of diagrams in $C$.
  Then $U$ is $R$-acyclic if and only if $V$ is $R$-acyclic.
  
\end{lemma}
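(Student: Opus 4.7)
The plan is to reduce the claim to a 2-out-of-3 argument for $R$-equivalences. By criterion \eqref{acyclic-cone/criteria/equivalence} of Lemma \ref{acyclic-cone/criteria}, it suffices to show that the coboundary $d_U:|U|_C\to U_\omega$ is an $R$-equivalence in $\P(C)$ if and only if $d_V:|V|_C\to V_\omega$ is one (here $|U|_C$ denotes the classifying presheaf of the underlying non-conical diagram obtained by composing $U:I\to C\slice U_\omega$ with $\source:C\slice U_\omega\to C$, and similarly for $V$).

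First I would unpack the hypothesis. A morphism in $\Cat\coneslice C$ whose underlying functor $h:I\to J$ is cofinal yields, via the source projections, a map of underlying $C$-diagrams together with compatibility with the cone structures; the cone points are identified (possibly up to a comparison map $\alpha:U_\omega\to V_\omega$). Assume for simplicity that $U_\omega = V_\omega$ and $V\circ h\simeq U$; the general case is handled identically after absorbing $\alpha$ into the triangle below.

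Next, by Corollary \ref{diagram/presents-presheaves}, cofinal maps of diagrams lie in $W_C$ and are inverted by $|-|_C$. Applied to the induced map of underlying $C$-diagrams, this gives an equivalence $|h|:|U|_C\tilde\to|V|_C$ in $\P(C)$. Functoriality of $|-|_C$ with respect to maps of diagrams, applied to the cone structure of $U$ and $V$, ensures that the triangle
\[
\begin{tikzcd}[column sep = small]
|U|_C \ar[rr, "\sim"] \ar[dr, "d_U"'] && |V|_C \ar[dl, "d_V"] \\
& U_\omega = V_\omega &
\end{tikzcd}
\]
commutes.

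The final step invokes strong saturation: by Corollary \ref{sketch/localizer/properties}, item \eqref{sketch/localizer/properties/saturation}, $R$-equivalences satisfy 2-out-of-3, so $d_U$ is an $R$-equivalence iff $d_V$ is, as desired. I do not anticipate a substantial obstacle; the only point needing care is the verification that the cone structure is preserved in the way required to make the triangle above commute, which is a direct consequence of the naturality of the classifying presheaf construction discussed in \S\ref{shape/}.
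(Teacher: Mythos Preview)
Your argument is correct, but it takes a slightly different route from the paper's. The paper invokes criterion~\eqref{acyclic-cone/criteria/continuity} of Lemma~\ref{acyclic-cone/criteria} directly: for any $R$-continuous presheaf $F$, cofinality of $h$ means the induced map $\lim_{j:J}F(V_j)\to\lim_{i:I}F(U_i)$ is an equivalence, so the boundary map $F(U_\omega)\to\lim_I F(U_i)$ is invertible iff $F(V_\omega)\to\lim_J F(V_j)$ is. That is a one-line proof with no appeal to saturation. Your approach via criterion~\eqref{acyclic-cone/criteria/equivalence} unpacks the same cofinality fact on the colimit side (as $|U|_C\simeq|V|_C$) and then uses 2-out-of-3 for $W_{\sk C}$; this is equally valid but slightly more roundabout, and requires the extra care you flag about the triangle commuting. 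Both arguments buy you exactly the same conclusion; the paper's is just the shortest path.
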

\begin{proof}

  By criterion \ref{acyclic-cone/criteria/continuity} of Lemma \ref{acyclic-cone/criteria}. \qedhere

\end{proof}

\begin{proposition} [Criteria for saturation] \label{saturated/criteria}

  Let $\sk C$ a sketch with small relations.
  The following conditions are equivalent:
  \begin{enumerate}
    \item \label{saturated/criteria/acyclic}
      $R(\sk C)=R-\mathtt{acy}(\sk C)$.
      
    \item \label{saturated/criteria/extension}
      Every Morita trivial extension of $R(\sk C)$ by a set of small conical diagrams is trivial.
      
    \item \label{saturated/criteria/saturation}
      $R(\sk C) = (\Cat\coneslice C)\cap W_\sk C$, where $W_\sk C\subseteq(\Cat\slice C)^{\Delta^1}$ is the set of $R(\sk C)$-equivalences.
      
    \item \label{saturated/criteria/cofinally-closed}
      If $h:(I\sep U) \rightarrow (J\sep V)$ is a cofinal map of small diagrams in $C$, then 
      \[
        (I\sep U)\in R(\sk C) \quad \Leftrightarrow \quad (J\sep V)\in R(\sk C);
      \]
      and if $S\subseteq R(\sk C)$ denotes the set of relations $(I\sep U)\in R(\sk C)$ such that $U$ is a right fibration, then 
      \[
        S = \langle S \rangle \cap (\RFib(C)\downarrow C)
      \]
      where $\langle S\rangle\subseteq\RFib(C)^{\Delta^1}$ is the Bousfield class generated by $S$ and $(\RFib(C)\downarrow C)\subset\RFib(C)^{\Delta^1}$ is the set of right fibrations over a slice of $C$.
      
  \end{enumerate}
  Moreover, for any sketch $\sk C$ the extension $\sk C \cup R-\mathtt{acy}(\sk C)$ is Morita-trivial, and $R-\mathtt{acy}(\sk C)$ satisfies the preceding conditions.
  
\end{proposition}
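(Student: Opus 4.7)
The plan is to dispatch the \emph{moreover} clause first, then treat the equivalences in order of difficulty. For the moreover: Lemma~\ref{morita-trivial-extension/criterion} shows that $\sk C \hookrightarrow \sk C \cup R-\mathtt{acy}(\sk C)$ is Morita-trivial because every added cone is by definition acyclic; as Morita-equivalent sketches have the same modules, the notion of acyclicity is invariant, so $R-\mathtt{acy}(\sk C \cup R-\mathtt{acy}(\sk C)) = R-\mathtt{acy}(\sk C)$, which is precisely condition \eqref{saturated/criteria/acyclic} for the extended sketch.

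The equivalence \eqref{saturated/criteria/acyclic}$\Leftrightarrow$\eqref{saturated/criteria/extension} is then immediate from Lemma~\ref{morita-trivial-extension/criterion}: any Morita-trivial extension consists of acyclic cones and so is absorbed into $R(\sk C)$ under \eqref{saturated/criteria/acyclic}; conversely, \eqref{saturated/criteria/extension} applied to the canonical extension by $R-\mathtt{acy}(\sk C)$ yields \eqref{saturated/criteria/acyclic}. For \eqref{saturated/criteria/acyclic}$\Leftrightarrow$\eqref{saturated/criteria/saturation}, I would view a conical diagram $(I\sep U)$ as the morphism in $\Cat\slice C$ from the underlying diagram $U|_I$ to the constant diagram $\underline{U_\omega}$; applying $|-|_C$ recovers the coboundary $d_U\colon |U|_C \rightarrow U_\omega$ in $\P(C)$, and by criterion \eqref{acyclic-cone/criteria/equivalence} of Lemma~\ref{acyclic-cone/criteria} this map is an $R$-equivalence precisely when the original morphism in $\Cat\slice C$ lies in $W_\sk C$, which happens precisely when $(I\sep U)$ is $R$-acyclic.

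The substantive implication is \eqref{saturated/criteria/acyclic}$\Leftrightarrow$\eqref{saturated/criteria/cofinally-closed}. The cofinal-closure half is a direct consequence of Lemma~\ref{acyclic-cone/cofinal}. For the Bousfield-class clause, the essential device is the canonical enlargement of Proposition~\ref{diagram/canonical}: the canonical functor $I \rightarrow C\slice|U|_C$ is cofinal, and the induced cone $C\slice|U|_C \rightarrow \underline{U_\omega}$ is a right fibration over a representable slice sharing the \emph{same} coboundary $d_U$. Therefore, assuming \eqref{saturated/criteria/acyclic}, the subset $S$ is exactly the right-fibration cones over representables whose coboundary is an $R$-equivalence; since every element of $R$ is cofinally equivalent to such a cone, $S$ and $R$ generate the same Bousfield class in $\P(C)^{\Delta^1}$, and the clause $S = \langle S\rangle \cap (\RFib(C)\downarrow C)$ becomes a tautology. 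For the converse, given an $R$-acyclic $(I\sep U)$, its canonical enlargement is a right-fibration cone over a representable whose coboundary is an $R$-equivalence; the Bousfield clause places it in $S \subseteq R(\sk C)$, and cofinal closure then delivers $(I\sep U) \in R(\sk C)$.

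The hard part is keeping clean the three parallel descriptions of an acyclic cone --- as a conical diagram in $C$, as a morphism of diagrams in $\Cat\slice C$, and as an $R$-equivalence with representable target in $\P(C)$ --- and verifying that the canonical enlargement mediates between them compatibly with Bousfield classes. Once that bookkeeping is in place, each implication reduces to an application of Lemma~\ref{acyclic-cone/criteria}, Lemma~\ref{acyclic-cone/cofinal}, Lemma~\ref{morita-trivial-extension/criterion}, or Proposition~\ref{diagram/canonical}.
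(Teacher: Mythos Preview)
Your proposal is correct and follows essentially the same route as the paper. The equivalences \eqref{saturated/criteria/acyclic}$\Leftrightarrow$\eqref{saturated/criteria/extension} and \eqref{saturated/criteria/acyclic}$\Leftrightarrow$\eqref{saturated/criteria/saturation} are handled identically (via Lemma~\ref{morita-trivial-extension/criterion} and criterion~\eqref{acyclic-cone/criteria/equivalence} of Lemma~\ref{acyclic-cone/criteria}), and the \emph{moreover} clause is argued the same way. For \eqref{saturated/criteria/acyclic}$\Leftrightarrow$\eqref{saturated/criteria/cofinally-closed} you and the paper both reduce to right fibrations via the canonical enlargement and then compare $S$ with its Bousfield class; your exposition is in fact more explicit than the paper's terse version. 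One small point worth spelling out in your converse \eqref{saturated/criteria/cofinally-closed}$\Rightarrow$\eqref{saturated/criteria/acyclic}: when you say ``the Bousfield clause places it in $S$'', you are implicitly using that $\langle S\rangle$ already contains all $R$-equivalences, which follows because cofinal closure forces every element of $R$ to have its right-fibration replacement in $S$ (so $\bar S=\bar R$ in $\P(C)^{\Delta^1}$); you stated this observation in the forward direction and it is worth flagging that it is reused here.
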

\begin{proof}

  \begin{labelitems}
  
    \item[\ref{saturated/criteria/acyclic}$\Leftrightarrow$\ref{saturated/criteria/extension}]
      By Lemma \ref{morita-trivial-extension/criterion}.
      
    \item[\ref{saturated/criteria/acyclic}$\Leftrightarrow$\ref{saturated/criteria/saturation}]
      By criterion \ref{acyclic-cone/criteria/equivalence} of Lemma \ref{acyclic-cone/criteria}.
      
    \item[\ref{saturated/criteria/acyclic}$\Leftrightarrow$\ref{saturated/criteria/cofinally-closed}]
      The first condition is equivalent to the statement that 
      \[
        (I\sep U)\in R \quad \Leftrightarrow \quad (C\slice |U|_C\rightarrow C\slice |U|_\sk C )\in R
      \]
      since any $(J\sep V)$ cofinal with $(I\sep U)$ will have isomorphic classifying presheaf.
      Thus, it is equivalent to the statement that $W_\sk C$ is the preimage of $S$ under the reflection $\Cat\slice C\rightarrow \RFib(C)$.
      Therefore, under these conditions $W_\sk C$ is strongly saturated if and only if $S$ is.
      
    \item[Morita triviality]
      By criterion \ref{acyclic-cone/criteria/continuity} of Lemma \ref{acyclic-cone/criteria}, a presheaf is $R(\sk C)$-continuous if and only if it is $R-\mathtt{acy}(\sk C)$-continuous.
      In particular, the concept of $R$-acyclicity is invariant under Morita-trivial extensions.
      Since $R$-acyclicity depends only on the category of cells and the Morita equivalence class, we are done. \qedhere

  \end{labelitems}
  
\end{proof}

\begin{definition}[Saturation]
\label{saturated/definition}

  A set of relations $R$ on a category $C$ satisfying the conditions of the Proposition is said to be $\Universe$-\emph{saturated}.
  Note that a saturated set of relations on a small, nonempty category is never small.

\end{definition}

\begin{para}[Absolute colimits and $\emptyset$-acyclicity]
\label{acyclic/example/absolute}

  Let us introduce the term \emph{aspherical} for $\emptyset$-acyclicity of diagrams.
  A conical diagram $(I\sep U)$ in $C$ is aspherical, hence $R$-acyclic for any $R$, if and only if its composite with Yoneda is a colimit in $\P(C)$.
  This implies that the colimit $U$ is preserved by \emph{any} functor, i.e.~it is an \emph{absolute} colimit: indeed, if $f:C\rightarrow D$ is any functor, then applying the extension theorem \ref{sketch/extension/existence} yields a colimit-preserving functor $f_!:\P(C)\rightarrow\P(D)$, whence $f_!\circ\Yoneda_C\circ U = \Yoneda_D\circ f\circ U$ is a colimit cone, whence $f\circ U$ is a colimit cone because $\Yoneda_D$ is fully faithful.  
  
  Examples of absolute colimits include colimits over:
  \begin{itemize}
    \item Any diagram indexed by a category with terminal object;
    \item A constant diagram indexed by a weakly contractible category (these are exactly the aspherical diagrams with codomain the one-point category);
    \item A diagram indexed by the walking idempotent $\Idem$ \pathcite{htt/idempotent/walking}.
    \item More generally, a diagram indexed by a category that admits a final idempotent (see \eqref{construction/universal/idempotent}).
  \end{itemize}  
  A right fibration is aspherical if and only if its index category has a final object.
  Hence, by Proposition \ref{saturated/criteria}, a diagram is aspherical if and only if it factors through a cofinal functor into a category with a final object.
  
  Since replacing a set of diagrams with its saturation is a monotone function, \emph{every} saturated set of diagrams on $C$ contains $\langle\emptyset\rangle_{\Cat\slice C}$.
  
\end{para}

\begin{example}[$\mathtt{fincoprod}$-acyclicity]

  Let $C$ be a category with finite coproducts. Let $\mathtt{fincoprod}(C)$ be the set of finite coproduct diagrams in $C$.
  A conical diagram is $\mathtt{fincoprod}$-acyclic if it's a colimit cone over a finite disjoint union of aspherical diagrams.
  But this condition isn't necessary: for example, if $X\sqcup Y\simeq Z$ in $C$ then the conical diagram
  \[
    \begin{tikzcd}[row sep = tiny]
      X \ar[r] \ar[ddr] & Z \ar[dr] \\
      && Z \\
      Y \ar[r] \ar[uur, crossing over] & Z \ar[ur]
    \end{tikzcd}
  \]
  (where both squares are commutative) is $\mathtt{fincoprod}$-acyclic.
  
\end{example}

%%%%%%%%%%%%%%%%%%%%%%%%%%%%%%%%%%%%%%%%%%%%%%%%%%%%%%%%%%%%%%%%%%%%%%%%%%%%%%

\subsection{Sketches on a point} \label{sketch/on-point/}

Sketches on a point present localisations of $\Space$.
Not surprisingly, the localisations that arise in this way are a known class have appeared in the literature under various guises.

\begin{proposition} \label{sketch/on-point/classification}

  The following posets are canonically equivalent:
  \begin{enumerate}
  
    \item \label{sketch/on-point/classification/sketch}
      Sketchable localisations of $\Space$ (considered as presheaves on a point);
      
    \item \label{sketch/on-point/classification/nullity}
      Nullity classes of spaces;
      
    \item \label{sketch/on-point/classification/localizer}
      Proper fundamental localisers on $\Cat_1$.
      
  \end{enumerate}
  A set $R\subseteq \Cat$ of diagrams on a point is saturated if and only if there is some nullity class $\bar{R}$ of spaces such that $I\in R$ if and only if $|I|\in\bar{R}$.
  
\end{proposition}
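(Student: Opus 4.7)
The plan is to route all three descriptions through Bousfield localisations of $\Space$. A conical diagram in $\point$ is a functor $I\to\point\slice\point\simeq\point$, so sketches on a point are identified with sets $R\subseteq\Cat$. Applied to $C=\point$, criterion \eqref{module/criterion/explicit} of Proposition \ref{sketch/module/criterion} says a space $X\in\P(\point)=\Space$ belongs to $\Mod(\point\sep R)$ if and only if the map $X\to\lim_{i:I}X\simeq\Map(|I|,X)$ is invertible for every $I\in R$, i.e.\ if and only if $X$ is null with respect to the set $\bar R\defeq\{|I|:I\in R\}$. Hence $\Mod(\point\sep R)$ is exactly the nullification of $\Space$ at $\bar R$.

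Conversely, by Corollary \ref{sketchability}, sketchable localisations of $\Space$ are precisely those generated as Bousfield localisations by morphisms with representable target, and on a point the only representable is $\point$ itself, so these are exactly nullifications. The assignment $R\mapsto\bar R$ is visibly monotone under inclusion and essentially surjective onto nullity classes (every space is the classifying space of some small category, a 1-category even), so it realises the equivalence \eqref{sketch/on-point/classification/sketch}$\Leftrightarrow$\eqref{sketch/on-point/classification/nullity}. Equivalence with \eqref{sketch/on-point/classification/localizer} is then a restatement of Cisinski's characterisation \cite{cisinski2006prefaisceaux} of proper (= aspherically generated) Grothendieck fundamental localisers on $\Cat_1$ in terms of nullifications of $\mathrm{Ho}(\Space)$, translated through the classifying space functor $|-|:\Cat_1\to\Space$; this step is the main obstacle in executing the plan rigorously, since the translation requires minding the distinction between 1- and $\infty$-categorical classifying spaces and invoking the considerable machinery of \emph{op.\ cit.}

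For the saturation statement, combine criterion \eqref{saturated/criteria/acyclic} of Proposition \ref{saturated/criteria}, which says that $R$ is saturated if and only if $R = R-\mathtt{acy}(\point\sep R)$, with criterion \eqref{acyclic-cone/criteria/equivalence} of Lemma \ref{acyclic-cone/criteria}, which says that a category $I$ is $R$-acyclic exactly when the coboundary map $|I|\to\point$ is an equivalence in $\Mod(\point\sep R)$. The latter condition holds precisely when $|I|$ lies in the class of spaces trivialised by the nullification --- i.e.\ in the nullity class $\bar R$ in another guise. Saturation of $R$ therefore amounts to the identification $R=\{I\in\Cat:|I|\in\bar R\}$; conversely, any set of this form is manifestly saturated, being closed under all operations (such as cofinal replacement, Lemma \ref{acyclic-cone/cofinal}) that preserve the classifying space.
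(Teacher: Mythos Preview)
Your argument is correct and follows essentially the same route as the paper: identify modules over $(\point\sep R)$ with $\{|I|:I\in R\}$-null spaces via the continuity criterion, and cite Cisinski for the link with proper fundamental localisers on $\Cat_1$. Your treatment is in fact more complete than the paper's, which omits any argument for the saturation clause; your use of Proposition \ref{saturated/criteria} and Lemma \ref{acyclic-cone/criteria} to reduce saturation to the condition $R=\{I:|I|\in\bar R\}$ is the natural way to fill that gap, and your invocation of Corollary \ref{sketchability} makes explicit why every sketchable localisation of $\Space$ is a nullification (the paper leaves this implicit).
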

\begin{proof}

  The equivalence \ref{sketch/on-point/classification/localizer}$\Leftrightarrow$\ref{sketch/on-point/classification/sketch} is from \cite[Thm.~6.1.11]{cisinski2006prefaisceaux}. 
  For the equivalence with \ref{sketch/on-point/classification/nullity} (in the sense of \cite[\S A.4]{farjoun2006cellular}), observe that a space is local with respect to a conical diagram $U:I^\triangleright \rightarrow \point$ if and only if is is local with respect to the groupoid completion $|I|$ of the index category, so that $R$-cocontinuous presheaves are exactly the $W$-null spaces for some suitable set of spaces $\{W\}$.\qedhere
  
\end{proof}

By pullback, these sketches induce natural sketches on any category; hence the phenomena appearing here are universal in sketch theory.
Extensive discussion of the actual structure of this lattice is to be found in Farjoun's monograph \cite{farjoun2006cellular}.
Here, I only pause to recall a few important examples.

\begin{example}[Acyclic spaces and Quillen plus construction] \label{sketch/example/acyclic-spaces}

  A motivating example for the study of nullification functors is the sketch on a point whose relations are the (constant conical diagrams on) \emph{acyclic spaces} \cite{hausmann1979acyclic}.
  Farjoun's acyclic cover functor $X\mapsto AX$ takes a space $X$ to the final object of $\bar{R}\slice X$, where $\bar{R}\subset\Space$ is the full subcategory spanned by acyclic spaces \cite[Thm.~2.1]{dror1972acyclic}.
  By \cite[Thm.~4.1, 4.2]{hausmann1979acyclic}, the sequence
  \[
    AX \rightarrow X \rightarrow X^+_\mathrm{Q}
  \]
  is a fibre-cofibre sequence, where $X^+_\mathrm{Q}$ is Quillen's plus construction.
  In other words, we have a pushout
  \[
    \begin{tikzcd}
      \colim_{U:\bar{R}/X}U \ar[r, equals] \ar[d] & 
      AX \ar[r] \ar[d] \ar[dr, phantom, "\ulcorner" near end] & X \ar[d] \\
      {|\bar{R}\slice X|} \ar[r, equals] & * \ar[r] & X^+_\mathrm{Q}.
    \end{tikzcd}
  \]
  So Quillen's plus construction is a special case of the plus construction $X^+=L_{\Space,R}X$ of \eqref{sketch/module/localizer/explicit}.

\end{example}

\begin{example}[Non-sketchable localisation of $\Space$]

  Localisations of $\Space$ that do not arise as sketches on a point are abundant. 
  For example, the set of $H\Z$-equivalences gives such a localisation: the acyclic objects the same as in Example \ref{sketch/example/acyclic-spaces}, but there are spaces with Abelian fundamental group (but with $\pi_1$ acting non-trivially on higher homotopy groups) that are not $H\Z$-local \cite{196604}.
  However, every such localisation $L$ induces a sketch whose relations are the $L$-trivial spaces, and hence another localisation (which may be finer than $L$).
  
\end{example}

\begin{example}[Truncations]

  Let $R_n\subseteq\Space$ be the set $\{S^k\}_{k>n}$ of spheres of dimension greater than $n$. 
  Then $R_n$-local spaces are $n$-truncated spaces, and $\bar{R}_n\subseteq\Space$ is the category of $n$-connected spaces.
  More generally, for any category $C$ the $R_n$-local presheaves are exactly the $n$-\emph{truncated} presheaves \cite[Def.\ 5.5.6.1]{HTT}, i.e.~the presheaves with values in $n$-truncated spaces.
  
\end{example}

To illustrate the effect of switching between $1$- and $\infty$-category theory, we end this section with a discussion of Bousfield localisations of $\Set$, that is, Bousfield localisations of $\Space$ containing the one sketched by $R=\{S^n\}_{n>0}$.
  
\begin{lemma}[Localisations of $\Set$]

  If $L:\Set\rightarrow\Set$ is a Bousfield localisation, then either $L$ is an equivalence or $L$ inverts all surjective maps.

\end{lemma}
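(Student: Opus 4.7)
The plan is to classify reflective subcategories of $\Set$ completely and observe that the statement holds in each case. A Bousfield localization $L : \Set \to \Set$ is equivalent data to a reflective subcategory $\mathcal{D} \defeq L\Set \hookrightarrow \Set$; the fully faithful inclusion is a right adjoint, hence preserves limits, so $\mathcal{D}$ is closed under all small limits in $\Set$. I would then case-split on whether $\mathcal{D}$ contains any set of cardinality at least two.

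The degenerate case is when every object of $\mathcal{D}$ has cardinality at most one, i.e.\ $\mathcal{D} \subseteq \{\emptyset, *\}$ up to isomorphism. Here $L$ necessarily sends every nonempty set to $*$ and sends $\emptyset$ either to itself or to $*$. In both sub-cases, a surjection $A \twoheadrightarrow B$ has $A$ empty iff $B$ is, so it is sent by $L$ to an identity map and in particular is inverted. This case should be disposed of in a few lines.

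The substance is the opposite case: if $\mathcal{D}$ contains some $Y$ with $|Y| \geq 2$, then I claim $\mathcal{D} = \Set$ and so $L$ is an equivalence. First, from distinct points $y_1, y_2 \in Y$ I would construct a pair of endomorphisms of $Y$ that agree on $\{y_1, y_2\}$ and disagree elsewhere, whose equalizer is then $\{y_1, y_2\}$; this exhibits a two-element set $\{0, 1\} \in \mathcal{D}$. Closure under products then puts $\{0, 1\}^I \in \mathcal{D}$ for every set $I$. For any $X \in \mathcal{D}$ and any subset $A \subseteq X$, the set $A$ is the equalizer of the characteristic function $\chi_A : X \to \{0, 1\}$ against the constant map $1$, so $A \in \mathcal{D}$ by another application of closure under equalizers. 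Finally, every set $S$ embeds into $\{0, 1\}^S$ via the singleton-indicators $s \mapsto \chi_{\{s\}}$, so $S$ is isomorphic to a subset of an object of $\mathcal{D}$ and therefore $S \in \mathcal{D}$.

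The only step that requires any cleverness is the first limit-closure trick, which converts a generic object of size at least two into the specific set $\{0, 1\}$ by cutting out a two-element subset as an equalizer of two endomorphisms. Everything afterwards is a short chain of explicit product and equalizer constructions.
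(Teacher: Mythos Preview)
Your proof is correct, but it follows a genuinely different route from the paper's. The paper works on the \emph{equivalences} side: it picks a non-bijective $L$-equivalence $f$, uses closure under retracts to extract either $\emptyset\to\point$ or $\mathbf{2}\to\point$ as an $L$-equivalence, and then bootstraps to all surjections using closure of the strongly saturated class under coproducts, composition, and filtered colimits. You work on the dual \emph{local objects} side: you exploit that the reflective subcategory $\mathcal D$ is closed under limits in $\Set$, and use equalizers and products to show that once $\mathcal D$ contains a two-element set it must be all of $\Set$.

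Each approach buys something slightly different. Yours yields the full classification of Bousfield localisations of $\Set$ (identity, $\tau_{-1}$, cotrivial) directly, which the paper only deduces as a corollary afterwards. The paper's argument, by contrast, stays within the language of Bousfield classes and their saturation properties, which is the ambient framework of the section; it also illustrates concretely how a single nontrivial $L$-equivalence propagates through the closure operations. Your equalizer trick to carve out $\{y_1,y_2\}$ from an arbitrary $Y$ with $|Y|\geq 2$ plays the same pivotal role as the paper's retract argument isolating $\mathbf{2}\to\point$: both reduce the problem to the two-element set, but from opposite ends of the adjunction.
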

\begin{proof}
  
  Let $f:A\rightarrow B$ be a non-bijective $L$-equivalence.
  If $A=\emptyset$ then $\emptyset\rightarrow\point$ is a retract of $f$, hence an $L$-equivalence; hence all maps are $L$-equivalences.
  So we may assume $A$ is nonempty; we will show that $\mathbf 2\rightarrow\point$ is an $L$-equivalence.
  If $f$ is injective but not surjective, then it admits a retract onto $\mathbf 1\rightarrow\mathbf 2$ and hence $\mathbf 2\rightarrow\point$ is an $L$-equivalence
  Otherwise, suppose that $f$ is not injective; then there are $a\neq a'\in A$ such that $f(a)=f(a')$. 
  We can define a retraction of $A$ onto $\{a,a'\}$ by sending every element of $A\setminus\{a,a'\}$ to $a$.
  It follows that $\{a,a'\}\rightarrow \point$ is an $L$-equivalence.
  
  Now, by stability under coproducts, any map $\mathbf{n}\rightarrow\mathbf{n-1}$ is an $L$-equivalence, and by composition, so is any surjective map between finite sets.
  By stability under filtered colimits, all surjective maps with target a point are $L$-equivalences.
  Finally, the result follows by stability under coproducts.
  \qedhere
  
\end{proof}

So any nontrivial Bousfield localisation of $\Set$ factors through $\tau_{-1}:\Set\rightarrow \Delta^1$, hence is equivalent either to $\tau_{-1}$ or to the cotrivial localisation $\Set\rightarrow \point$.
In particular, both are sketchable.

However, there are still non-sketchable localisations within the domain of 1-topos theory:

\begin{example}[Non-sketchable localisations of $G-\Set$]

  The category of $H$-trivial $G$-sets, where $G$ is a discrete group, is limit-closed but not sketchable in $\Set$ nor $\Space$. Indeed, there are no nontrivial resolutions of the free cyclic $G$-set: $G-\Space\slice G\cong\Space$.

\end{example}

%%%%%%%%%%%%%%%%%%%%%%%%%%%%%%%%%%%%%%%%%%%%%%%%%%%%%%%%%%%%%%%%%%%%%%%%%%%%%%

\section{Constructions} \label{construction/}

  In this section we turn to the task of describing presentations of modules over sketches in terms of diagrams of restricted shape.
  In \S\ref{construction/extension/}, we prove a recognition lemma for left Kan extensions, and derive universal properties and a composition law for categories of constructible modules.
  Then, using some facts about density and rectification recalled in \S\ref{construction/rectification/}, we study in \S\ref{construction/universal/} the question of idempotence of cocompletion under certain sets of colimits defined universally for all categories (or a family of categories).
  
\subsection{Diagrams} \label{construction/diagram/}

If $\sk C$ is a sketch, then a ($\Universe$-)\emph{diagram in $\sk C$} is defined to be a ($\Universe$-)diagram in $\Sigma(\sk C)$.
The remarks of \eqref{conical-diagram/set} and \eqref{conical-diagram/functoriality} apply \emph{mutatis mutandis} to sets of (non-conical) diagrams.

\begin{para}[Diagrams and conical diagrams]

  A conical diagram $U:J\rightarrow C\slice \omega$ has an \emph{underlying diagram} $U|_{J}=\forget^\omega(U)$ obtained by composition with the projection $\source:C\slice \omega\rightarrow C$.
  Or, interpreting $U$ as a functor $J^\triangleright\rightarrow C$, $\forget^\omega(U)$ is forgetting the value on the cone point.
  Or, interpreting $U$ as a morphism in $\Cat\slice C$ with constant target, $\forget^\omega(U)$ is simply the source.
  We thereby obtain a map
  \[
    \forget^\omega:\Object(\Cat \coneslice C) \rightarrow \Object(\Cat\slice C).
  \]
  \begin{itemize}
    
    \item
      Given a set of conical diagrams $R$, the set $\forget^\omega(R)$ of underlying diagrams is denoted $R^\circ$.
      
    \item
      Given a set of diagrams $K$ in $C$, all of whose members admit a colimit in $C$, we write $K-\mathtt{colimit}$ for the set of conical diagrams which are colimit cones of elements of $K$.
      This generalises the notation $(\Universe\Cat\slice \sk C)-\colimit$ from \eqref{sketch/module/relations}.
      
  \end{itemize}
  We have $(R^\circ)-\mathtt{colimit}=R$ if and only if $R$ is subcanonical and $(K-\mathtt{colimit})^\circ = K$ if and only if all elements of $K$ admit colimits.
  
\end{para}

\begin{definition}[Sketch with constructions]

  A \emph{sketch with constructions} in $\Universe$ is a pair $(\sk C\sep K)$ where $\sk C:\Universe\Sketch$ and $K:\Universe\Power(\Universe\Cat\slice C)$ is a set of diagrams that contains all singletons.
  Sketches with constructions form a full subcategory
  \[
    \Sketch^K \subseteq \int_{\sk C:\Sketch}\Power(\Cat\slice \symb(\sk C)).
  \]
  cut out by the singleton condition.
  The forgetful functor $\Sketch^K\rightarrow\Sketch$ is a bi-Cartesian fibration with obvious left and right adjoints.

\end{definition}

\begin{definition}[Constructibly cocomplete] \label{construction/cocomplete/definition}

  A sketch with constructions $(\sk C\sep K)$ is said to be \emph{constructibly cocomplete} if it is subcanonical, every element of $K$ admits a colimit in $\Sigma(\sk C)$, and $K-\colimit\subseteq R(\sk C)$.
  In particular, functors between constructibly cocomplete sketches `preserve $K$-colimits'.
  The full subcategory of $\Sketch^K$ spanned by the constructibly cocomplete sketches is called $\Sketch^\mathrm{cc}$.

\end{definition}

\begin{remark}

  Just as the definition of $\Sketch$ involves fixing three universes, $\Sketch^K$ involves five, and for some questions it is important to allow these to be different.
  
\end{remark}

\begin{definition}[Constructible modules]

  The full subcategory of $\Mod(\sk C)$ spanned by the $K$-constructible modules is denoted $\Mod^K(\sk C)$ or $\Mod(\sk C\sep K)$.
  It is considered as a sketch with:
  \begin{itemize}
    \item
      relations $K-\mathtt{colimits}\subseteq \Cat\coneslice\Mod^K(\sk C)$;
    \item
      constructions $\Yoneda_{\sk C,K,!}(K)\subseteq\Cat\slice\Mod^K(\sk C)$.
  \end{itemize}
  The hypothesis that $K$ contain singletons ensures that the Yoneda functor $\Yoneda_\sk C$ factors through $\Mod(\sk C\sep K)$.
  The category of $K$-constructible presheaves does not depend on the choice of universe $\Universe$ used to define $\Mod(C)$, so long as $\sk C\ll \Universe$ and $K\subseteq \Universe\Cat\slice C$.
  
\end{definition}

\begin{para}[Constructions and slicing] \label{construction/slice}

  Let $\sk C$ be a sketch with constructions and let $F$ be a module.
  If $K\subseteq\Cat\slice \sk C$ is a set of diagrams, define $K\slice F\subseteq \Cat\slice(C\slice F)$ to be the preimage of $K$ under $\source:C\slice F\rightarrow C$.
  Then the square
  \[
    \begin{tikzcd}
      \Mod^{K\slice F}(\sk C\slice F) \ar[r] \ar[d, hook] 
      \ar[dr, phantom, "\lrcorner" near start] & \Mod^K(\sk C) \ar[d, hook] \\
      \Mod(\sk C\slice F) \ar[r] & \Mod(\sk C)
    \end{tikzcd}
  \]
  is a pullback.
  Indeed, if $G\in\Mod(\sk C\slice F)$ is a colimit $G=|U|_\sk C$ in $\Mod(\sk C)$ for some $(I\sep U)\in K$, then postcomposition with $G\rightarrow F$ yields a lift of $(I\sep U)$ to $\sk C\slice F$, whence $G$ is also $K\slice F$-constructible.
  
  It follows that the equivalence $\Mod(\sk C\slice F)\cong \Mod(\sk C)\slice F$ of Proposition \ref{sketch/module/slice} restricts to an equivalence
  \[
    \Mod^K(\sk C)\slice F\cong \Mod^{K\slice F}(\sk C\slice F).
  \]

\end{para}

%%%%%%%%%%%%%%%%%%%%%%%%%%%%%%%%%%%%%%%%%%%%%%%%%%%%%%%%%%%%%%%%%%%%%%%%%%%%%%

\subsection{Extension theorems}
\label{construction/extension/}

The main result of this section is an extension lemma \eqref{constructible/extension/uniqueness}, which provides a criterion for an extension along a dense functor to be a left Kan extension and preserve a class of colimits.

\begin{lemma}[Properties of the constructible Yoneda functor] \label{constructible/yoneda/properties}

  Let $(\sk C\sep K)$ be a sketch with constructions.
  Then:
  \begin{enumerate}
    
    \item \label{constructible/modules/structure/density}
      The Yoneda functor $\Yoneda_{\sk C,K}:C\rightarrow \Mod^K(\sk C)$ and the inclusion $j:\Mod^K(\sk C)\hookrightarrow\Mod(\sk C)$ are both dense.
      The former is an essential morphism of sketches.
      
    \item \label{constructible/modules/structure/inclusion-is-extension}
      The inclusion $j$ is uniquely isomorphic to the Yoneda pullback $\Yoneda_{\sk C,K}^\dagger$.
      In particular, it is a left Kan extension of $\Yoneda_{\sk C}$ along $\Yoneda_{\sk C,K}$.
      
  \end{enumerate}

\end{lemma}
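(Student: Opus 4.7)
The plan is to derive both parts simultaneously from the factorisation $\Yoneda_{\sk C} = j\circ\Yoneda_{\sk C,K}$ together with the density closure principles of Proposition \ref{dense/properties}. First I would observe that $j$ is fully faithful (it is a full subcategory inclusion) and that $\Yoneda_{\sk C}$ is dense by \ref{sketch/yoneda/definition} (equivalently, by Proposition \ref{dense/properties}-\ref{dense/properties/bousfield} applied to $\Yoneda_C$ and the Bousfield localisation $L_{\sk C}$). Proposition \ref{dense/properties}-\ref{dense/properties/fully-faithful} then applies to this factorisation and yields in one stroke that $\Yoneda_{\sk C,K}$ is dense, that $j$ is dense, and that $j$ is a left Kan extension of $\Yoneda_{\sk C}$ along $\Yoneda_{\sk C,K}$ --- which is precisely the ``in particular'' clause of part \ref{constructible/modules/structure/inclusion-is-extension}.

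For the identification $j\cong \Yoneda_{\sk C,K}^\dagger$, I would unfold the definition of the Yoneda pullback pointwise: for $F:\Mod^K(\sk C)$ and $c:C$, full faithfulness of $j$ together with the formula $\Yoneda_{\sk C} = L_{\sk C}\circ\Yoneda_C$ from \ref{sketch/yoneda/definition} and the adjunction $L_{\sk C}\dashv\iota$ collapse the mapping spaces
\[
  \Yoneda_{\sk C,K}^\dagger(F)(c) \;\cong\; \Map_{\Mod(\sk C)}(\Yoneda_{\sk C}\,c,\, jF)\;\cong\; \Map_{\P(C)}(\Yoneda_C\,c,\, jF)\;\cong\; jF(c).
\]
This both exhibits $\Yoneda_{\sk C,K}^\dagger$ as factoring through $\Mod(\sk C)$ and identifies it there with $j$; the uniqueness of the isomorphism is forced either by this explicit pointwise formula or, more conceptually, by the uniqueness clause of Proposition \ref{kan-extension/exist-unique}, since both sides are left Kan extensions along the dense functor $\Yoneda_{\sk C,K}$.

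Finally, for essentialness of $\Yoneda_{\sk C,K}$ viewed as a morphism $\sk C\rightarrow (\Mod^K(\sk C), K-\colimit)$, I would apply criterion \ref{essential/criteria/relation} of Proposition \ref{essential/criteria}: under the identification $\Yoneda_{\sk C,K}^\dagger\cong j$ just established, this reduces to checking that $j$ sends $K-\colimit$ cones in $\Mod^K(\sk C)$ to colimit cones in $\Mod(\sk C)$, which is tautological since these cones are, by the very definition of $\Mod^K(\sk C)$ as the subcategory of $K$-constructible modules, colimit cones in the ambient $\Mod(\sk C)$. The main subtlety to watch will be reconciling the two a priori different targets of $j$ (which lands in $\Mod(\sk C)$ by design) and of $\Yoneda_{\sk C,K}^\dagger$ (which a priori takes values in the larger $\P(C)$); the pointwise computation of the second paragraph is precisely the mechanism that resolves this, so getting that calculation clean is the one place where care is needed.
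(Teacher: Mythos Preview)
Your proof is correct and follows the same core approach as the paper: the paper's entire proof is the single line ``By Proposition \ref{dense/properties}-\ref{dense/properties/fully-faithful}'', applied to the factorisation $\Yoneda_{\sk C} = j\circ\Yoneda_{\sk C,K}$ with $j$ fully faithful, which is exactly what you do in your first paragraph. Your second and third paragraphs then spell out the identification $j\cong\Yoneda_{\sk C,K}^\dagger$ and the essentialness claim, both of which the paper's one-liner leaves to the reader; your pointwise computation and your invocation of criterion \ref{essential/criteria/relation} are the natural ways to fill these in, and are correct.
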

\begin{proof}

  By Proposition \ref{dense/properties}-\ref{dense/properties/fully-faithful}. \qedhere

\end{proof}

\begin{proposition}[Uniqueness of constructible extensions] \label{constructible/extension/uniqueness}

  Let $(\sk C\sep K)$ be a sketch with constructions, $g:\Mod^K(\sk C)\rightarrow (D\sep\colimit)$ a functor of sketches into a cocomplete category (i.e.~which preserves all $R(\sk C)$-colimits and $K$-colimits).
  Then $g$ is a left Kan extension of its restriction to $\sk C$, it preserves all colimits of diagrams in $C$ which exist in $\Mod^K(\sk C)$, and is the unique extension of $g$ that preserves $K$-colimits.
  
\end{proposition}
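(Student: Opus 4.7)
The plan is to reduce to the universal property of the full module category $\Mod(\sk C)$ (Proposition~\ref{sketch/extension/existence}) by extending $g|_{C}\defeq g\circ\Yoneda_{\sk C,K}$ to a cocontinuous functor on $\Mod(\sk C)$ and then restricting back. The hypothesis that $g$ preserves $R(\sk C)$-colimits means precisely that $g|_{C}$ sends $R(\sk C)$-cones to colimit cones in $D$, i.e.\ that it is a morphism of sketches $\sk C\to(D,\colimit)$. By Proposition~\ref{sketch/extension/existence}, $g|_{C}$ admits an essentially unique colimit-preserving extension $\tilde g\colon\Mod(\sk C)\to D$, which by Proposition~\ref{sketch/extension/criterion} is the left Kan extension of $g|_{C}$ along $\Yoneda_{\sk C}$.

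Now let $j\colon\Mod^{K}(\sk C)\hookrightarrow\Mod(\sk C)$ be the fully faithful inclusion, so that $\Yoneda_{\sk C}=j\circ\Yoneda_{\sk C,K}$. Full faithfulness of $j$ yields $j^{*}j_{!}\simeq\id$, whence by the composition law for left Kan extensions \eqref{kan-extension/base-change}, the restriction $\tilde g\circ j$ is a left Kan extension of $g|_{C}$ along $\Yoneda_{\sk C,K}$. The key step is the identification $g\simeq\tilde g\circ j$. Both functors extend $g|_{C}$ and preserve $K$-colimits --- the former by hypothesis, the latter because $K$-colimits in $\Mod^{K}(\sk C)$ have $K$-constructible apices and so coincide with the corresponding colimits in $\Mod(\sk C)$, which $\tilde g$ preserves. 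The universal property of the left Kan extension \eqref{kan-extension/canonical-map} furnishes a canonical comparison $\alpha\colon\tilde g\circ j\to g$ extending $\id_{g|_{C}}$. For any $F\in\Mod^{K}(\sk C)$, $K$-constructibility provides $(I,V)\in K$ with $F\simeq|V|_{\sk C}$, and $K$-colimit preservation expresses both $(\tilde g\circ j)(F)$ and $g(F)$ as $\colim_{I}g|_{C}\circ V$; naturality of $\alpha$ then forces $\alpha_{F}$ to be the identity between these two presentations, hence invertible.

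From the identification $g\simeq\tilde g\circ j$ the three claims follow immediately. Part~1 is the identification itself, since $\tilde g\circ j$ was constructed as a left Kan extension along $\Yoneda_{\sk C,K}$. For part~2, if $W\colon J\to C$ is a diagram whose composite with $\Yoneda_{\sk C,K}$ has colimit $F'\in\Mod^{K}(\sk C)$, then $F'$ is also the colimit of $\Yoneda_{\sk C}\circ W$ in $\Mod(\sk C)$ (by full faithfulness of $j$ together with $F'\in\Mod^{K}(\sk C)$), so $g(F')=\tilde g(F')=\colim_{J}g|_{C}\circ W$ by cocontinuity of $\tilde g$. For part~3, the same comparison argument applied verbatim to any other $K$-colimit-preserving extension $g'$ of $g|_{C}$ yields $g'\simeq\tilde g\circ j\simeq g$. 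The main technical obstacle is upgrading the pointwise equivalences $(\tilde g\circ j)(F)\simeq g(F)$ to a coherent natural isomorphism, which is handled by constructing $\alpha$ from the Kan extension universal property first (guaranteeing naturality a priori) and only then checking invertibility componentwise via $K$-constructible presentations.
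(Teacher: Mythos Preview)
Your argument is correct and follows essentially the same strategy as the paper's: construct the cocontinuous extension $\tilde g=\Yoneda_{\sk C,!}(g|_C)$ on $\Mod(\sk C)$, compare it with $g$ using the universal property of left Kan extensions, and verify the comparison is invertible on $K$-constructible objects because both sides preserve $K$-colimits. The only cosmetic difference is where the comparison is carried out: the paper compares $\Yoneda_{\sk C,!}(g|_C)$ with $j_!(g)$ as functors on $\Mod(\sk C)$, whereas you restrict along $j$ and compare $\tilde g\circ j$ with $g$ on $\Mod^K(\sk C)$; these two comparisons correspond under the $j_!\dashv j^*$ adjunction for the fully faithful inclusion $j$.
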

\begin{proof}

  Denote by $j:\Mod^K(\sk C)\hookrightarrow\Mod(\sk C)$ the dense inclusion.
  There is a unique map 
  \[
    \Yoneda_{\sk C,!}(g|C) \rightarrow j_!(g)
  \]
  extending the identity on $g|C$, and this is an equivalence on $K$-constructible objects because both functors preserve $K$-colimits.
  It follows that $g$ preserves all $K$-constructible colimits, and uniqueness follows from that of Kan extensions. \qedhere
  
\end{proof}

\begin{remark}

  To see how powerful the conclusion of Proposition \ref{constructible/extension/uniqueness} is, pause to observe how easily it fails without the sketch data presenting $\Mod^K(\sk C)$, i.e.~in the context of a general dense functor.
  For example, it is easy enough to construct examples of dense maps $C\rightarrow D$ of posets where a functor $D\rightarrow E$ preserves a colimit $X=\colim U$ of a diagram $U:I\rightarrow C$, but does not preserve colimits over other diagrams with apex isomorphic to $X$ (even if we assume $X\not\in C$).
  A concrete example is the inclusion of $C\subseteq D\defeq\Power(3)$ of the set of proper subsets of a 3-element set: a functor on $D$ preserving $2\sqcup 1 = 3$ needn't preserve $1\sqcup 1\sqcup 1 = 3$.
  
\end{remark}

\begin{para}

  The commuting square
  \[
    \begin{tikzcd}
      C \ar[r, "\Yoneda_{\sk C,K}"] \ar[d] & \Mod^K(\sk C) 
      \ar[d, "\Yoneda_{\Mod^K(\sk C)}"] \\
      \Mod(\sk C) \ar[r, "\Yoneda_{\sk C,K,!}"'] & \Mod(\Mod^K(\sk C)).
    \end{tikzcd}
  \]
  obtained from functoriality of $\Mod$ yields, by left Kan extension along $\Yoneda_{\sk C,K}$, a triangle
  \[
    \begin{tikzcd}
       & \Mod^K(\sk C) 
      \ar[d, "\Yoneda_{\Mod^K(\sk C)}"] 
      \ar[dl, "j=\Yoneda_{\sk C,K}^\dagger" description] \\
      \Mod(\sk C) \ar[r, "\Yoneda_{\sk C,K,!}"'] & \Mod(\Mod^K(\sk C)).
    \end{tikzcd}
  \]
  Putting this together with the strict left Kan extension $\Yoneda_\sk C\cong j\Yoneda_{\sk C,K}$, we obtain a diagonal lift of the preceding square:
  \[
    \begin{tikzcd}
      C \ar[r, "\Yoneda_{\sk C,K}"] \ar[d] & \Mod^K(\sk C) 
      \ar[d, "\Yoneda_{\Mod^K(\sk C)}"] 
      \ar[dl, "j=\Yoneda_{\sk C,K}^\dagger" description] \\
      \Mod(\sk C) \ar[r, "\Yoneda_{\sk C,K,!}"'] & \Mod(\Mod^K(\sk C)).
    \end{tikzcd}
  \]
  
\end{para}

We will now show that the lower horizontal arrow is an equivalence of categories.

\begin{proposition}[Transitivity]
\label{constructible/transitive}

  Let $\sk C$ be a colimit sketch on a category $C$, $K$ a set of constructions on $\sk C$.
  The Yoneda functor $\Yoneda_{\sk C,K}:\sk C\rightarrow \Mod^K(\sk C)$ is a Morita equivalence.
  That is, there is a unique equivalence
  \[
    \Mod(\sk C) \cong \Mod(\Mod^K(\sk C))
  \]
  compatible with the Yoneda functor of $\sk C$.

\end{proposition}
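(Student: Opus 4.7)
The plan is to show that the canonical functor $\Yoneda_{\sk C, K, !}: \Mod(\sk C) \to \Mod(\Mod^K(\sk C))$ induced by $\Mod$ on the sketch morphism $\Yoneda_{\sk C, K}: \sk C \to \Mod^K(\sk C)$ (\S\ref{sketch/modules/as-functor}) is an equivalence. Since $\Yoneda_{\sk C, K, !}$ is the unique cocontinuous extension of $\Yoneda_{\Mod^K(\sk C)} \circ \Yoneda_{\sk C, K}$ along $\Yoneda_\sk C$ by Proposition~\ref{sketch/extension/existence}, showing it is an equivalence will deliver both the existence and uniqueness parts of the statement.

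For fully faithfulness, invoke Proposition~\ref{essential/fully-faithful}: $\Yoneda_{\sk C, K}$ is essential by Lemma~\ref{constructible/yoneda/properties}, so it suffices to verify that the natural transformation $\Yoneda_\sk C \to \Yoneda_{\sk C, K}^\dagger \circ \Yoneda_{\sk C, K}$ is invertible. Lemma~\ref{constructible/yoneda/properties}~(2) identifies $\Yoneda_{\sk C, K}^\dagger$ with the inclusion $j: \Mod^K(\sk C) \hookrightarrow \Mod(\sk C)$, and $j \circ \Yoneda_{\sk C, K} = \Yoneda_\sk C$ holds by construction, settling the matter.

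For essential surjectivity, exploit density of the Yoneda embedding $\Yoneda_{\Mod^K(\sk C)}: \Mod^K(\sk C) \to \Mod(\Mod^K(\sk C))$ together with the fact that $\Yoneda_{\sk C, K, !}$ is cocontinuous (so its essential image is colimit-closed): it suffices to show each $\Yoneda_{\Mod^K(\sk C)}(M)$ with $M \in \Mod^K(\sk C)$ lies in the image of $\Yoneda_{\sk C, K, !}$. By $K$-constructibility, $M$ is the apex of a colimit cone $\xi: J^\triangleright \to \Mod^K(\sk C)$ whose restriction $\xi|_J$ is $\Yoneda_{\sk C, K} \circ u$ for some $u: J \to C$ in $K$; this $\xi$ is by definition a relation of the sketch $(\Mod^K(\sk C)\sep K-\colimit)$. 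Since the Yoneda functor of any sketch sends its relations to colimit cones in the module category (by the Bousfield localisation construction, Proposition~\ref{sketch/module/bousfield}), we obtain an expression $\Yoneda_{\Mod^K(\sk C)}(M) = \colim_J \Yoneda_{\sk C, K, !}(\Yoneda_\sk C(u(j)))$ in $\Mod(\Mod^K(\sk C))$, placing $\Yoneda_{\Mod^K(\sk C)}(M)$ in the image as required.

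The principal obstacle is the final computation, which requires carefully distinguishing colimits in $\Mod^K(\sk C)$ (used to present $M$ via $K$-constructibility) from colimits in $\Mod(\Mod^K(\sk C))$ (needed to apply the image-closure argument). The bridge is the defining property of the sketch structure $K-\colimit$ on $\Mod^K(\sk C)$: its relations are precisely those presentations of $K$-constructible modules that descend under Yoneda to honest colimits in the enveloping module category.
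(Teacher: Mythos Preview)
Your proof is correct, and your argument for full faithfulness coincides exactly with the paper's: both invoke Lemma~\ref{constructible/yoneda/properties} to see that $\Yoneda_{\sk C,K}$ is essential with $\Yoneda_{\sk C,K}^\dagger\Yoneda_{\sk C,K}=j\Yoneda_{\sk C,K}=\Yoneda_\sk C$, then apply Proposition~\ref{essential/fully-faithful}.

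For essential surjectivity the routes diverge. The paper observes that $\Yoneda_{\sk C,K,!}$ is a left Kan extension along the dense inclusion $j$ of the dense functor $\Yoneda_{\Mod^K(\sk C)}$, so by Proposition~\ref{dense/properties}\,(\ref{dense/properties/extension}) it is itself dense; being also a left adjoint, Proposition~\ref{dense/properties}\,(\ref{dense/properties/left-adjoint}) forces it to be a Bousfield localisation, and a fully faithful Bousfield localisation is an equivalence. Your argument is more direct: you show the essential image contains every $\Yoneda_{\Mod^K(\sk C)}(M)$ by unwinding $K$-constructibility and the sketch relations on $\Mod^K(\sk C)$, then conclude via density of $\Yoneda_{\Mod^K(\sk C)}$ and closure of the image under colimits. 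This avoids the density machinery of \S\ref{dense/} in favour of an explicit computation, at the cost of having to track the commuting square $\Yoneda_{\Mod^K(\sk C)}\circ\Yoneda_{\sk C,K}\simeq\Yoneda_{\sk C,K,!}\circ\Yoneda_\sk C$ by hand. One small point of presentation: the parenthetical ``so its essential image is colimit-closed'' relies not only on cocontinuity but also on the full faithfulness you just established (to lift diagrams in the image back to $\Mod(\sk C)$); it would be cleaner to say so explicitly.
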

\begin{proof}

  Since by Lemma \ref{constructible/yoneda/properties} $\Yoneda_{\sk C,K}$ is an essential functor of sketches and 
  \[
    \Yoneda_{\sk C,K}^\dagger\Yoneda_{\sk C,K} = j\Yoneda_{\sk C,K} = \Yoneda_\sk C,
  \]    
  its extension $\Yoneda_{\sk C,K,!}:\Mod(\sk C)\rightarrow \Mod(\Mod^K(\sk C))$ is fully faithful by Proposition \ref{essential/fully-faithful}.
  On the other hand, by Proposition \ref{dense/properties}-\ref{dense/properties/extension}, $\Yoneda_{\sk C,K,!}$ is dense and hence, being also a left adjoint, a Bousfield localisation by \ref{dense/properties}-\ref{dense/properties/left-adjoint}. \qedhere
  
\end{proof}

\begin{corollary}[Universal property of categories of constructible modules]
\label{constructible/universal}

  Let $(\sk C\sep K)$ be a sketch with constructions. The Yoneda functor $\Yoneda_\sk C:\sk C\rightarrow \Mod(\sk C)$ induces an equivalence of categories
  \[
    \Fun((\Mod^K(\sk C)\sep K_\sk C)\sep (\sk D\sep K_\sk D)) \cong 
      \Fun( (\sk C\sep K_\sk C)\sep (\sk D\sep K_\sk D))
  \]
  for any constructibly cocomplete sketch $(\sk D\sep K_\sk D)$.

\end{corollary}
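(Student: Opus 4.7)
The plan is to build an inverse to restriction by first extending to all presheaves via Proposition~\ref{sketch/extension/existence}, then showing the resulting functor preserves the subcategory of constructible modules. The forward direction is immediate: by Lemma~\ref{constructible/yoneda/properties}, $\Yoneda_{\sk C, K}: \sk C \rightarrow \Mod^K(\sk C)$ is itself an essential morphism of sketches, and tautologically a morphism of sketches with constructions, so precomposition defines the restriction functor.

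To construct the inverse, I would start with $f:(\sk C\sep K_\sk C)\rightarrow (\sk D\sep K_\sk D)$ and compose with the fully faithful Yoneda embedding $\Yoneda_\sk D:\sk D\hookrightarrow\Mod(\sk D)$ (available because $\sk D$ is subcanonical), producing a morphism of sketches $\sk C\rightarrow (\Mod(\sk D)\sep\colimit)$. Proposition~\ref{sketch/extension/existence} then extends this uniquely to a colimit-preserving $F:\Mod(\sk C)\rightarrow \Mod(\sk D)$. The key step is to verify $F$ sends $\Mod^K(\sk C)$ into the essential image of $\sk D$: every $K$-constructible module arises as the colimit in $\Mod(\sk C)$ of $\Yoneda_{\sk C, K}\circ U$ for some $U\in K_\sk C$, and colimit-preservation gives $F$ of this equal to the colimit in $\Mod(\sk D)$ of $\Yoneda_\sk D\circ fU$. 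Since $fU\in K_\sk D$ and $(\sk D\sep K_\sk D)$ is constructibly cocomplete, $fU$ admits a colimit $X\in\sk D$; the hypothesis $K_\sk D\text{-}\colimit\subseteq R(\sk D)$ ensures $\Yoneda_\sk D(X)$ is the colimit cone in $\Mod(\sk D)$. Hence $F$ restricts to $\tilde f:\Mod^K(\sk C)\rightarrow\sk D$ with $\tilde f\circ\Yoneda_{\sk C, K}=f$, and one verifies that $\tilde f$ sends the $K\text{-}\colimit$ relations on $\Mod^K(\sk C)$ to elements of $R(\sk D)$ and its tautological constructions to $K_\sk D$-diagrams.

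For the reverse composition, take any morphism of sketches with constructions $g:(\Mod^K(\sk C)\sep\ldots)\rightarrow (\sk D\sep K_\sk D)$. The composite $\Yoneda_\sk D\circ g:\Mod^K(\sk C)\rightarrow \Mod(\sk D)$ preserves all $K$-colimits --- using again that $\Yoneda_\sk D$ carries $K_\sk D$-colimits in $\sk D$ to colimits in $\Mod(\sk D)$ --- and sends the $K\text{-}\colimit$ relations to colimit cones in $\Mod(\sk D)$. Proposition~\ref{constructible/extension/uniqueness} then identifies $\Yoneda_\sk D\circ g$ with the unique such extension of its restriction to $C$, which is precisely the $F$ constructed above for $f=g\circ\Yoneda_{\sk C, K}$; full faithfulness of $\Yoneda_\sk D$ yields $g\cong\widetilde{g\circ\Yoneda_{\sk C, K}}$. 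The main obstacle I expect is the crucial step that $F$ descends to $\sk D$; it uses all three clauses of constructible cocompleteness together with the concrete presentation of constructible modules as explicit colimits under $\Yoneda_{\sk C,K}$, and would fail if one only had an abstract characterisation of $\Mod^K(\sk C)$.
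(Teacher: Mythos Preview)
Your argument is correct. The route differs from the paper's in organisation: the paper embeds $\sk D$ in $\Mod(\sk D)$ and then chains three equivalences on the right-hand column
\[
  \Fun(\Mod^K(\sk C)\sep \Mod(\sk D)) \;\cong\; \Fun(\Mod(\Mod^K(\sk C))\sep \Mod(\sk D)) \;\cong\; \Fun(\Mod(\sk C)\sep \Mod(\sk D)) \;\cong\; \Fun(\sk C\sep \Mod(\sk D)),
\]
the first and third by the extension theorem (Proposition~\ref{sketch/extension/existence}) and the middle one by \emph{transitivity} (Proposition~\ref{constructible/transitive}), then checks that the full subcategories landing in $\sk D$ match. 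You instead bypass transitivity entirely: you build the inverse by hand via a single application of Proposition~\ref{sketch/extension/existence} to get $F:\Mod(\sk C)\rightarrow\Mod(\sk D)$, verify directly that $F$ carries $\Mod^K(\sk C)$ into $\sk D$ using constructible cocompleteness, and then invoke Proposition~\ref{constructible/extension/uniqueness} for the reverse composite. The paper's approach is more modular --- it isolates transitivity as a reusable statement --- whereas yours is more self-contained and makes the r\^ole of each clause of constructible cocompleteness explicit; in effect you are unpacking the content of transitivity inline.
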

\begin{proof}

  Embed $\sk D$ in $\Mod(\sk D)$ (using that $\sk D$ is subcanonical), and identify
  \[
    \begin{tikzcd}
      \Fun(\Mod^K(\mathcal{C})\sep \mathcal{D}) \ar[r, hook] \ar[ddd, equals] &
        \Fun( \Mod^K(\mathcal{C})\sep \Mod(\mathcal{D}) ) \ar[d, equals, "\text{extension}" description] \\
      &
        \Fun ( \Mod(\Mod^K(\sk C)) \sep \Mod(\sk D) ) \ar[d, equals, "\text{transitivity}" description] \\
      &
        \Fun ( \Mod(\mathcal{C}) \sep \Mod(\mathcal{D}) ) \ar[d, equals, "\text{extension}" description] \\
      \Fun^K( \mathcal{C}\sep \mathcal{D}) \ar[r, hook] &
        \Fun ( \sk C \sep \Mod(\sk D) )
    \end{tikzcd}
  \]
  noting that this diagram commutes because the identification $\Mod(\Mod^K(\sk C))\cong \Mod(\sk C)$ commutes with the inclusion of $\Mod^K(\sk C)$. \qedhere
  
\end{proof}

\begin{corollary}

  The full subcategory $\Sketch^{K,cc}$ of $\Sketch^K$ spanned by the constructibly cocomplete sketches is reflective.
  
\end{corollary}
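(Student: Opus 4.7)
The plan is to exhibit a left adjoint $(\sk C\sep K) \mapsto (\Mod^K(\sk C)\sep K_\sk C)$ to the inclusion $\Sketch^{K,cc} \hookrightarrow \Sketch^K$, with unit the localised Yoneda functor $\Yoneda_{\sk C,K}:\sk C\rightarrow\Mod^K(\sk C)$.

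First, I would verify that $(\Mod^K(\sk C)\sep K_\sk C)$ is genuinely constructibly cocomplete in the sense of Definition \ref{construction/cocomplete/definition}: its relations are $K-\colimit$ and hence are actual colimit cones, so the sketch is subcanonical; its constructions $\Yoneda_{\sk C,K,!}(K)$ admit colimits in $\Mod^K(\sk C)$ by definition of $K$-constructibility; and the containment $K_\sk C-\colimit\subseteq R(\Mod^K(\sk C))$ holds tautologically, since the relations were chosen to be exactly this set. I would also record that $\Yoneda_{\sk C,K}$ is a morphism in $\Sketch^K$: it sends $K$-constructions into $\Yoneda_{\sk C,K,!}(K)$ by construction, and it carries $R(\sk C)$-cones to colimit cones in $\Mod^K(\sk C)$ because modules are by definition $R$-continuous.

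Second, the required universal property is already Corollary \ref{constructible/universal}, which asserts
\[
  \Fun((\Mod^K(\sk C)\sep K_\sk C)\sep(\sk D\sep K_\sk D)) \;\cong\; \Fun((\sk C\sep K)\sep(\sk D\sep K_\sk D))
\]
for any constructibly cocomplete $(\sk D\sep K_\sk D)$, the equivalence being induced by restriction along $\Yoneda_{\sk C,K}$. This is precisely the defining universal property of a reflector into the subcategory $\Sketch^{K,cc}$.

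The main obstacle I anticipate is coherence: one must upgrade this pointwise data to a bona fide functor $\Sketch^K\rightarrow\Sketch^{K,cc}$ and a natural transformation from the identity. To bypass an ad hoc construction, I would invoke the $\infty$-categorical recognition principle for reflective subcategories \pathcite{htt/bousfield-localization/recognition/local-object}: it suffices to exhibit, for each $(\sk C\sep K)$, a morphism into an object of $\Sketch^{K,cc}$ that induces an equivalence on mapping spaces into $\Sketch^{K,cc}$. The verifications above supply exactly this, and the fact that $(\Mod^K(\sk C)\sep K_\sk C)$ is itself constructibly cocomplete ensures the unit is a local equivalence (so the reflector is idempotent up to canonical equivalence). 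One must also keep track of the implicit universes so that both source and target of the reflection live in a common ambient universe, but this follows from the conventions for $\Sketch^K$ already laid down in \S\ref{construction/diagram/}.
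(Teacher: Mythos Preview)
Your proposal is correct and follows essentially the same approach as the paper: use Corollary~\ref{constructible/universal} for the pointwise universal property of $\Yoneda_{\sk C,K}$, then invoke a recognition principle for reflective subcategories. The paper's proof is the one-liner ``By Corollary~\ref{constructible/universal} and \cite[Prop.~6.1.11]{HCHA}'', citing Cisinski rather than HTT for the recognition step (your citation of \pathcite{htt/bousfield-localization/recognition/local-object} is slightly off --- the relevant statement in HTT is closer to Prop.~5.2.7.8 --- but the substance is the same).
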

\begin{proof}

  By Corollary \ref{constructible/universal} and \cite[Prop.~6.1.11]{HCHA}. \qedhere
  
\end{proof}

\begin{remark}

  Given a sketch with constructions $(\sk C\sep K)$, we obtain by extension of $(C_\emptyset\sep K)\rightarrow(\sk C\sep K)$ an essentially surjective, $K$-colimit preserving functor
  \[
    \P^K(C)\rightarrow\Mod^K(\sk C).
  \]
  We might ask if this functor is itself a localisation, or even a Bousfield localisation.
  This boils down to a tricky question of whether zig-zags of weak equivalences ending at $K$-constructible presheaves can be transformed to zig-zags of weak equivalences \emph{through} $K$-constructible presheaves.
  I don't know any useful general criteria to answer this.
  
\end{remark}

\begin{para}[As presentations for dense functors]

  Let $\mathbf{Dense}$ be the full subcategory of $\Cat^{\Delta^1}$ spanned by the dense functors (see \S\ref{dense/}).
  One can realise $\Mod^K$ as a functor
  \[
    \Sketch^K \rightarrow \Dense
  \]
  by picking it out as a subfunctor of the restriction of the functor
  \[
    [\Yoneda:\Sigma\rightarrow\Mod]:\Sketch\rightarrow \Universe\Cat^{\Delta^1}
  \]
  constructed in \ref{sketch/modules/as-functor}. Here $\Universe$ is a universe with respect to which $\PrL\Cat$ is small.
  
\end{para}

%%%%%%%%%%%%%%%%%%%%%%%%%%%%%%%%%%%%%%%%%%%%%%%%%%%%%%%%%%%%%%%%%%%%%%%%%%%%%%

\subsection{Rectification} \label{construction/rectification/}

This paragraph is about constructing weak inverses to Morita equivalences between sketches with different categories of cells.

\begin{definition}[Constructible Morita equivalence] \label{constructible/morita/definition}

  A morphism $f:(\sk C\sep K_\sk C)\rightarrow (\sk D\sep K_\sk D)$ of sketches with constructions is a \emph{constructible equivalence} if its extension $\Mod^{K_\sk C}(\sk C)\rightarrow\Mod^{K_\sk D}(\sk D)$ is an equivalence of categories.
  It is a \emph{constructible Morita equivalence} if it is a constructible equivalence and the following, equivalent conditions are satisfied:
  \begin{enumerate}
    \item $f$ is a Morita equivalence of sketches;
    \item $f_!:\Mod^{K_\sk C}(\sk C)\rightarrow\Mod^{K_\sk D}(\sk D)$ is a Morita equivalence of sketches.
  \end{enumerate}
  The equivalence of these conditions follows from transitivity, Proposition \ref{constructible/transitive}.
  
  It is \emph{constructibly essential} if it is essential and $f^\dagger$ preserves constructible objects, i.e.~it extends to an adjunction $\Mod^{K_\sk C}(\sk C)\rightleftarrows\Mod^{K_\sk D}(\sk D)$.
  
\end{definition}

\begin{corollary}[Constructible Morita equivalence from Morita equivalence] \label{constructible/morita/criterion}

  Let $f:\sk C\rightarrow \sk D$ be a Morita equivalence, $K_\sk C$, $K_\sk D$ some sets of constructions such that $f(\sk C)\subseteq \sk D$.
  Suppose that each element of $K_\sk D$ admits an $f$-rectification whose underlying $\sk C$-diagram is $K_\sk C$-constructible.
  Then $f:(\sk C\sep K_\sk C)\rightarrow (\sk D\sep K_\sk D)$ is a constructible Morita equivalence.
  
\end{corollary}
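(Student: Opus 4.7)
The plan is to verify that the extension functor $f_!:\Mod^{K_\sk C}(\sk C)\rightarrow\Mod^{K_\sk D}(\sk D)$ is an equivalence of categories; combined with the assumption that $f$ is already a Morita equivalence of the underlying sketches, this is precisely the condition demanded by Definition \ref{constructible/morita/definition}. Since the ambient $f_!:\Mod(\sk C)\tilde\rightarrow\Mod(\sk D)$ is an equivalence by hypothesis, it suffices to show that it restricts to a bijection between the full subcategories $\Mod^{K_\sk C}(\sk C)$ and $\Mod^{K_\sk D}(\sk D)$.

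First I would check well-definedness, i.e.\ that $f_!$ sends $K_\sk C$-constructible modules to $K_\sk D$-constructible modules. Reading the hypothesis $f(\sk C)\subseteq \sk D$ as the condition $f_*K_\sk C\subseteq K_\sk D$ that $f$ be a morphism of sketches with constructions, this follows from colimit-preservation together with the identity $f_!\Yoneda_\sk C\cong\Yoneda_\sk D\circ f$: for $U\in K_\sk C$ we obtain $f_!|U|_\sk C\cong |fU|_\sk D$, which lies in $\Mod^{K_\sk D}(\sk D)$ because $fU\in K_\sk D$. Full faithfulness of the restricted functor is then inherited directly from full faithfulness of the ambient equivalence.

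Essential surjectivity is where the rectification hypothesis earns its keep. Given $Y\in\Mod^{K_\sk D}(\sk D)$, I would write $Y\cong|V|_\sk D$ for some $V:J\rightarrow\sk D$ in $K_\sk D$, and apply the hypothesis to obtain a lax square
\[
  \begin{tikzcd}
    I \ar[d, "h"'] \ar[r, "U"] \ar[dr, phantom, "\Downarrow" marking] &
    \sk C \ar[d, "f"] \\
    J \ar[r, "V"] & \sk D
  \end{tikzcd}
\]
exhibiting $V$ as a left Kan extension of $fU$ along $h$, with $U$ a $K_\sk C$-constructible diagram. Definition \ref{rectification/definition} then delivers $f_!|U|_\sk C\cong|V|_\sk D\cong Y$, placing $Y$ in the essential image. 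The principal delicacy is the interpretation of the compatibility assumption $f(\sk C)\subseteq\sk D$: without reading it as $f_*K_\sk C\subseteq K_\sk D$ there is no manifest reason for $f_!$ to carry constructibility from left to right, and one would be forced to seek a dual rectification statement. Under the natural reading, however, the argument reduces to propagating the single formula $f_!|U|_\sk C\cong|fU|_\sk D$ through both directions of the equivalence.
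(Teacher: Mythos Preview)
Your argument is correct and, at its core, coincides with the paper's: both proofs rest on the identity $f_!|U|_{\sk C}\cong|V|_{\sk D}$ furnished by a rectification, which shows that every $K_{\sk D}$-constructible module lies in the $f_!$-image of something $K_{\sk C}$-constructible.

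The packaging differs, however. You argue essential surjectivity directly: given $Y\cong|V|_{\sk D}$, the rectification hands you $|U|_{\sk C}\in\Mod^{K_{\sk C}}(\sk C)$ with $f_!|U|_{\sk C}\cong Y$. The paper instead routes through the comma object: it observes that the rectification datum induces an $R(\sk C)$-equivalence $(I,U)\to(C\downarrow_DJ,\source)$, concludes that the canonical diagram $C\downarrow_DJ$ is $K_{\sk C}$-constructible, and then invokes Proposition~\ref{constructible/essential/criterion} (stated just afterwards) to deduce that $f^*$ --- which for a Morita equivalence is $f_!^{-1}$ --- preserves constructibility. Your route is more elementary in that it avoids both the comma-object intermediary and the forward reference to Proposition~\ref{constructible/essential/criterion}; the paper's route has the modest advantage of exhibiting the result as a special case of the more general adjunction criterion. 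Your remark about the reading of the hypothesis $f(\sk C)\subseteq\sk D$ as $f_*K_{\sk C}\subseteq K_{\sk D}$ is also well taken; the paper leaves this implicit.
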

\begin{proof}

  Since $f$ is a Morita equivalence and $h$ is $R(\sk D)$-acyclic, $(I\sep U)\rightarrow (C\downarrow_DJ\sep\source)$ is an $R(\sk C)$-equivalence; hence, the latter is $K_\sk C$-constructible.
  We may now apply Proposition \ref{constructible/essential/criterion}. \qedhere
  
\end{proof}

\begin{corollary}[Colimits of constructible modules] \label{constructible/modules/idempotence}

  Let $(\sk C\sep K_\sk C)$ be a sketch with constructions.
  Then $\Mod^K(\sk C)$ admits colimits of any diagram admitting a $\Yoneda_\sk C$-rectification.
  If $K'\subseteq \Cat\slice \Mod^K(\sk C)$ is the set of all such diagrams, then the extension $\Mod^K(\sk C)\cup K'-\mathtt{colimits}$ is Morita-trivial.
  
\end{corollary}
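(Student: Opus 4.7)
The plan is to exploit the defining left-Kan-extension property of the rectification and combine it with the transitivity equivalence from Proposition \ref{constructible/transitive}.

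Given a diagram $V:J\rightarrow \Mod^K(\sk C)$ with a $\Yoneda_\sk C$-rectification $(h:I\rightarrow J\sep U:I\rightarrow C\sep \psi)$, the defining condition reads $V\cong h_!(\Yoneda_\sk C U)$ as functors into $\Mod(\sk C)$. Composition of left Kan extensions along $I\rightarrow J\rightarrow \point$ yields $(\colim_J)\circ h_!\cong \colim_I$, so I would first compute
\[
  \colim_J V \;\cong\; \colim_J h_!(\Yoneda_\sk C U) \;\cong\; \colim_I \Yoneda_\sk C U \;=\; |U|_\sk C
\]
in $\Mod(\sk C)$. Under the hypothesis that the rectification has $|U|_\sk C\in \Mod^K(\sk C)$ --- which holds in particular when the underlying $\sk C$-diagram $(I\sep U)$ is itself $K_\sk C$-constructible, as in the convention of Corollary \ref{constructible/morita/criterion} --- full faithfulness of the inclusion $\Mod^K(\sk C)\hookrightarrow\Mod(\sk C)$ promotes this to the colimit of $V$ in $\Mod^K(\sk C)$, establishing the first assertion.

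For the Morita-triviality claim, write $\sk C'\defeq(\Mod^K(\sk C)\sep K_\sk C-\colimit)$. By Lemma \ref{morita-trivial-extension/criterion} the extension $\sk C'\cup (K'-\colimit)$ is Morita-trivial if and only if each cone in $K'-\colimit$ is $R(\sk C')$-acyclic, and by Lemma \ref{acyclic-cone/criteria} this acyclicity is equivalent to the cone's remaining a colimit cone upon being mapped into $\Mod(\sk C')$. Now transitivity (Proposition \ref{constructible/transitive}) provides an equivalence $\Mod(\sk C')\cong \Mod(\sk C)$ compatible with the Yoneda functors, under which the candidate colimit cone in $\Mod(\sk C')$ corresponds to the same cone viewed in $\Mod(\sk C)$. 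That this cone is in fact a colimit cone in $\Mod(\sk C)$ is precisely the identity $\colim_J V\cong |U|_\sk C$ from the first paragraph.

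The main obstacle is bookkeeping: tracking the several Yoneda-type functors ($\Yoneda_\sk C$, $\Yoneda_{\sk C,K}$, $\Yoneda_{\sk C'}$) through the transitivity equivalence, and verifying that it sends the relevant colimit cone on one side to that on the other. Once set up correctly, the whole proof reduces to the single rectification identity $\colim_J V\cong |U|_\sk C$, with no further technical input needed.
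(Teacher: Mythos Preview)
Your argument is correct and is essentially the intended one: the paper states this corollary without proof, and the natural derivation is exactly the rectification identity $\colim_J V \cong |U|_{\sk C}$ in $\Mod(\sk C)$ followed by full faithfulness of the inclusion, together with transitivity (Proposition~\ref{constructible/transitive}) to reduce $R(\Mod^K(\sk C))$-acyclicity of the new colimit cones to their being colimit cones in $\Mod(\sk C)$.

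You are also right to flag the implicit hypothesis: as stated, the corollary only makes sense if the $\Yoneda_\sk C$-rectification is understood to have $K$-constructible underlying $\sk C$-diagram (equivalently $|U|_{\sk C}\in\Mod^K(\sk C)$). This is how the paper uses the result --- see Proposition~\ref{construction/universal/criterion} and the Deligne--Mumford example in \S\ref{construction/universal/stacks}, where the underlying diagram lies in $K$ by construction --- and it matches the explicit hypothesis in Corollary~\ref{constructible/morita/criterion}. Without it the first claim is false (take $K$ to be the singletons). So your added hypothesis is not a weakness of your proof but a necessary clarification of the statement.
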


We end with a criterion in the same vein for constructing adjunctions between categories of constructible modules.
However, I do not know any examples in which this criterion is easy to check.

\begin{proposition}[Constructible adjunction] \label{constructible/essential/criterion}

  Let $f:(\sk C\sep K_\sk C)\rightarrow (\sk D\sep K_\sk D)$ be a morphism of sketches with constructions which is essential as a morphism of sketches.  
  Suppose that for each $(J\sep V)\in K_\sk D$, the diagram $C\downarrow_DJ\rightarrow C$ is $K_\sk C$-constructible.
  Then the right adjoint to $f$ preserves constructible objects.

\end{proposition}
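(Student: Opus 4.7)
The plan is to reduce the statement to a direct computation using two established tools: the lax-pullback formula for classifying presheaves (Proposition \ref{shape/pullback}) and the Beck-Chevalley condition that characterises essentiality (\ref{essential/criteria}-\ref{essential/criteria/beck-chevalley}).

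I will first fix a $K_\sk D$-constructible module $F \simeq |V|_\sk D$ with $(J\sep V)\in K_\sk D$, and aim to identify $f^*F$ with the classifying $\sk C$-module $|f\downarrow V|_\sk C$ of the lax pullback diagram $f\downarrow V : C\downarrow_DJ \rightarrow C$ (see \ref{comma}). Once this identification is in hand, the hypothesis that $(C\downarrow_DJ \sep \source)$ is $K_\sk C$-constructible directly supplies $f^*F \in \Mod^{K_\sk C}(\sk C)$.

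For the identification I will apply Proposition \ref{shape/pullback}-\ref{shape/pullback/lax} to $f:C\rightarrow D$ to obtain $|f\downarrow V|_C \simeq f^*|V|_D$ in $\P(C)$, and then push both sides through the Bousfield localisation $L_\sk C$. The left-hand side becomes $|f\downarrow V|_\sk C$ by the factorisation $L_\sk C \circ |-|_C = |-|_\sk C$ (the defining property of the classifying-module construction). For the right-hand side, essentiality of $f$ supplies the Beck-Chevalley equivalence $L_\sk C \circ f^* \simeq f^* \circ L_\sk D$ from \ref{essential/criteria}-\ref{essential/criteria/beck-chevalley}, whose evaluation at $|V|_D$ yields $L_\sk C f^*|V|_D \simeq f^* L_\sk D |V|_D = f^*|V|_\sk D = f^*F$.

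The only step of real substance is invoking the Beck-Chevalley identification, which comes for free from the essentiality hypothesis rather than requiring separate verification. Everything else is a direct application of results from \S\ref{shape/} and \S\ref{essential/}, so I anticipate no technical obstacle --- the argument is a clean assembly of the two cited propositions.
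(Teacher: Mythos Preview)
Your proposal is correct and takes essentially the same approach as the paper: apply Proposition~\ref{shape/pullback}-\ref{shape/pullback/lax} to identify the pullback of a constructible module with the classifying $\sk C$-module of the lax pullback diagram, then invoke the hypothesis. Your version is in fact more explicit than the paper's one-line proof, which writes only ``$f^\dagger|V|_D = |C\downarrow_DJ\rightarrow C|_\sk C$'' and leaves implicit the passage from $\P(C)$ to $\Mod(\sk C)$ via the Beck--Chevalley identification $L_\sk C\circ f^* \simeq f^*\circ L_\sk D$ supplied by essentiality; you have correctly isolated this as the point where the essentiality hypothesis enters.
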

\begin{proof}

  By Proposition \ref{shape/pullback}-\ref{shape/pullback/lax}, $f^\dagger|V|_D =|C\downarrow_DJ\rightarrow C|_\sk C$ which is by hypothesis $K_\sk C$-constructible whenever $V$ is $K_\sk D$-constructible. \qedhere

\end{proof}

\begin{example}[Constructible equivalences are not Morita equivalences]

  The notion of constructible equivalence is strictly weaker than that of constructible Morita equivalence, because the set of constructions $K$ may not be large enough to `see' all the relations $R$.
  For an example, take any sketch on $\point$ whose relations are connected spaces (cf.~\S\ref{sketch/on-point/}, and $K$ the set of diagrams indexed by sets.
  Regardless of $R$, we have $\Mod^\mathtt{finset}(\point\sep R) = \Set$.

\end{example}

\begin{para}[Constructible equivalence from orthogonality]

  There are interesting constructible equivalences that are not Morita equivalences.
  Let $(C\sep R)$ be a sketch and let $K$ be the set of diagrams $U:I\rightarrow C$ satisfying the following, equivalent, conditions:
  \begin{enumerate}
  
    \item 
      $|U|_C:\P(C)$ is $R$-continuous;
      
    \item 
      for all elements $V:J^\triangleright\rightarrow C$ of $R(\sk C)$ we have
      \[
        \colim_{i:I} C(V_\omega, U_i) \cong \lim_{j:J}\colim_{i:I} C(V_j, U_i) 
      \]
      in $\Space$ (i.e.~``the colimit over $(I\sep U)$ commutes with $R$-limits'').
      
  \end{enumerate}
  Then using condition 1, $\P^K(C)=\P_R(C)$ as a subset of $\P(C)$, i.e. $(C\sep \emptyset\sep K) \rightarrow (C\sep R\sep \Universe)$ is a constructible Morita equivalence.
  
  For example, if $C$ is $\kappa$-cocomplete and $\sk C=C_\kappa$ is the sketch of all $\kappa$-small colimits, then $K$ is the set of $\kappa$-filtered (small) diagrams in $C$, and we recover the familiar equation $\Ind_\kappa(C)=\P_\kappa(C)$.
  
  By analogy with this example, we might call a diagram satisfying the above conditions `$R(\sk C)$-filtered'.
  This should be compared to the concept of fibrancy for presheaves discussed in \ref{module/factorization-system}, which we also compared to `filteredness': certainly, the above criteria imply that the realisation is fibrant, but the converse does not always hold.
  
  This is an instance of the general observation that `filtered' conditions expressed in terms of a lifting condition tend to be weaker than corresponding conditions of limits commuting with colimits.
  The distinction between these two concepts is discussed in a the context of classical category theory in \cite{adamek2002classification}.
  
\end{para}

%%%%%%%%%%%%%%%%%%%%%%%%%%%%%%%%%%%%%%%%%%%%%%%%%%%%%%%%%%%%%%%%%%%%%%%%%%%%%%

\subsection{Universal constructions}
\label{construction/universal/}

A set of categories defines a set of constructions for any sketch.
Given such a \emph{universal set of constructions} $K$, it is natural to ask when a category of $K$-constructible modules $\Mod^K(\sk C)$ itself admits colimits indexed by members of $K$.
In other words, when is the $K$-cocompletion functor idempotent? 
When does it agree with the cocompletion under \emph{iterated} $K$-colimits, as considered in \cite[\S5.3.6]{HTT} and \cite[\S4.4]{hinich2020yoneda}?

For many standard examples of $K$, this is satisfied: filtered categories, sifted categories, finite sets, $\kappa$-compact categories, the walking idempotent, $n$-connected diagrams, and higher Deligne-Mumford stacks.
In this section, we gather arguments establishing idempotence for these cases.
We find that in some cases it is natural to use rectifications as in Corollary \ref{constructible/modules/idempotence}, while in others is is simpler to use ad hoc or more sophisticated methods (i.e.~compactness).

References to definitions, theorems, and other environments in this subsection quote the number of the containing (sub)subsection.

\begin{definition*}[Universal sets of diagrams] \label{construction/universal/definition}

  A \emph{universal set of diagrams} is a set of categories, considered as a fully faithful Cartesian section $(-,K):\Sketch\rightarrow\Sketch^K$ of the forgetful functor $\forget^K:\Sketch^K\rightarrow\Sketch$.
  By abuse of notation, if $K\subseteq \Cat$ is a universal set of diagrams then for any sketch $\sk C$ we use the same letter to denote the set of diagrams in $\sk C$ having index category in $K$, or $K(\sk C)$ if ambiguity might thereby result.
  
  A universal set of diagrams induces a pointed endofunctor $\sk C\mapsto \Mod^K(\sk C)$ on $\Sketch$ with a unit map $\Yoneda_K:\id_\Sketch\rightarrow \Mod^K$ (but which is not, in general, a monad).
  A universal set of diagrams $K$ is \emph{idempotent} on a sketch $\sk C$ if $(\Mod^K(\sk C), K)$ is constructibly cocomplete, i.e.~if the $K$-cocompletion functor is idempotent on $\sk C$.
  
\end{definition*}

A sufficient general criterion for idempotence is the following:

\begin{proposition*}[Natural rectification] \label{construction/universal/criterion}

  Let $K\subseteq\Cat$ be a set of categories which is closed under Grothendieck integration, in that if $S\in K$ and $p:E\rightarrow S$ is a co-Cartesian fibration whose fibres belong to $K$, then $E\in K$.
  Let $\sk C$ be a sketch. Suppose that for each $F:\Mod(\sk C\sep K)$, $C\slice F\in K$.
  Then $\Mod(\sk C\sep K)$ is $K$-cocomplete.
  
\end{proposition*}
\begin{proof}

  Direct application of Corollary \ref{constructible/modules/idempotence}. \qedhere

\end{proof}

\begin{corollary*}[Rectification of strongly filtered diagrams] \label{construction/universal/criterion/filtered}

  Let $R\subseteq\Cat$ be a set of categories, and let $K\subseteq\Cat$ be the set of all categories $I$ such that colimits in $\Space$ over $I$ commute with $R$-limits.
  Then $K$-cocompletion is idempotent on all categories.

\end{corollary*}
\begin{proof}

  One easily verifies the hypotheses of Proposition \ref{construction/universal/criterion}. \qedhere 

\end{proof}

Unfortunately, by looking at the examples it is clear that this is not always easy to apply, especially when $\sk C$ has relations.

\subsubsection{Finite sets}

\label{construction/universal/finset}

  Let $\sk C:\Sketch$ and let $\mathtt{finset}$ be the set of finite sets.
  If $S$ is a finite set and $U:S\rightarrow \Mod^\mathtt{finset}(\sk C)$ is a map, then for each $s:S$ there is a finite set $T_s$ and a map $V_s:T_s\rightarrow C\slice U_s$ that exhibits $U_s$ as its coproduct in $\Mod(\sk C)$. 
  The square
  \[
    \begin{tikzcd}
      \coprod_{s:S}T_s \ar[d] \ar[r, "V"] & C \ar[d, "\Yoneda_\sk C"] \\
      S \ar[r, "U"'] & \Mod^\mathtt{finset}(\sk C)
    \end{tikzcd}
  \]
  is a rectification.
  Thus $\mathtt{finset}$-cocompletion is idempotent on any sketch.

\begin{remark*}[Finite sets via natural rectification]

  An approach to idempotence of $\mathtt{finset}$-cocompletion via Proposition \ref{construction/universal/criterion} might proceed by enlarging $\mathtt{finset}$ to the constructibly equivalent class $K\subseteq\Cat$ of categories that admit a cofinal functor from a finite set.
  But then, proving that $K$ is stable under Grothendieck integration requires us to repeat essentially the same argument, and even then it isn't clear how to apply Proposition \ref{construction/universal/criterion} to sketches with nontrivial relations.
  In general, the rectification constructed above depends on choices and cannot be made functorial for all sketches.
  
\end{remark*}

\subsubsection{Filtered categories} \label{constructible/example/filtered}

  Let $\kappa\mathtt{filt}$ be the set of small diagrams $I$ such that colimits over $I$ commute with $\kappa$-small limits in $\Space$.
  Then by Corollary \ref{construction/universal/criterion/filtered}, $\kappa\mathtt{filt}$-completion is idempotent on $\Cat$.
  
  Compared to the proof of \cite[Prop.~5.3.5.3]{HTT}, this argument avoids model category techniques and reduction to known facts about classical categories.
  On the other hand, we are using an \emph{a priori} stronger definition of filteredness than \emph{loc.~cit.}, and the proof \cite[Prop.~5.3.3.3]{HTT} that the two notions are equivalent goes by reduction to a statement about posets into which our techniques provide no new insight.

\subsubsection{Sifted categories} \label{constructible/example/sifted}

  Similar logic applies to a set $\mathtt{sift}$ of \emph{sifted} small categories \pathcite{htt/sifted/definition}.
  To see this, we need a minor reformulation of the definition of \emph{loc.~cit}:
  \begin{lemma*}
    
    A category $I$ is sifted if and only if the colimit functor $\Space^I\rightarrow\Space$ preserves finite products.
    
  \end{lemma*}
  \begin{proof}
  
    Suppose $\colim_I$ preserves finite products.
    Then for any $i,j:I$ the groupoid completion of $(i\times j)\downarrow I$ (product of representable functors in $\Space^I$) is a point, whence $I\rightarrow I\times I$ is cofinal.
    Moreover, $I$ is nonempty because 
    \[
      \colim_\emptyset:\Space^\emptyset\cong\point\rightarrow\Space, \quad * \mapsto \emptyset
    \]
    does not preserve the final object.
    
    The converse statement is \pathcite{htt/sifted/colimits-preserve-products}. \qedhere
    
  \end{proof}
  Again by Corollary \ref{construction/universal/criterion/filtered}, $\mathtt{sift}$-cocompletion is idempotent on $\Cat$.
  
  %An important observation is that $\mathtt{sift}$ has the same saturation as $\{\Delta^\op\} \cup \mathtt{filt}$.
   
  Compared to the proof of \cite[Prop.~5.5.8.10.(4)]{HTT}, this argument does not assume that $C$ admits finite coproducts. (I don't know any interesting examples that make use of this added generality, though).

\subsubsection{Idempotents} \label{construction/universal/idempotent}

  Let $\Idem$ denote the walking idempotent \pathcite{htt/idempotent/walking}, that is, the 1-category with one object $o$ whose endomorphism monoid is $\End(o)=\{1, e|e^2=e\}$.
  Thus $\{\Idem\}$-completeness is idempotent completeness.
  
  Let $\sk C$ be any subcanonical sketch, $X:\Mod^{\{\Idem\}}(\sk C)$ a retract of a representable, $e:\End(X)$ an idempotent.
  By constructibility, there is an object $X':C$, idempotent $\epsilon:\End_C(X')$, and retraction $s:X\rightleftarrows X':r$ in $\P(C)$ such that $sr=\Yoneda_{\sk C}(\epsilon)$.
  Then $ser:\End(\Yoneda_\sk C(X'))$ is an idempotent which by subcanonicity lifts to an idempotent $e':\End(X')$, whose splitting also lives in $\Mod^{\{\Idem\}}$.
  That is, $\{\Idem\}$-cocompletion is idempotent on subcanonical sketches.

  This question can also be approached using Proposition \ref{construction/universal/criterion}, though the argument is not exactly simpler.
  The relevant class $\mathtt{idem}$ of categories is those that admit a \emph{final idempotent}.
  A final idempotent on a category $I$ is a cone over the identity functor.
  That is, it consists of the following equivalent data (see \eqref{cone}):
  \begin{itemize}
    \item 
      A section of the projection $I\slice i\rightarrow I$.
      
    \item
      A natural transformation $\id_I\rightarrow \underline{i}$.
    
    \item
      A section of the terminal map $\Yoneda_I(i)\rightarrow\point_I$ in $\P(I)$. In particular, the final presheaf is a retract of a representable presheaf.
  \end{itemize}
  The walking idempotent $\Idem$ admits a unique final idempotent.
  A final idempotent with apex $i$ whose value at $i$ is the identity map exhibits $i$ as a final object of $I$.
  If $I$ admits a final idempotent at $i$ and $h:I\rightarrow J$ is cofinal, then the defining property of cofinality provides a section of $J\slice hi\rightarrow J$.
  To apply Proposition \ref{construction/universal/criterion}, we still need to show that $\mathtt{idem}$ is closed under $\int$. 
  \begin{lemma*}
  
    Suppose $I$ admits a final idempotent $(i,\sigma)$ and $p:J\rightarrow I$ is a co-Cartesian fibration such that $J_i$ admits a final idempotent $(j,\tau)$.
    Then $J$ admits a final idempotent at $j$.
    
  \end{lemma*}
  \begin{proof}
  
    Considered as a natural transformation $\id_J\rightarrow \underline{j}$, the desired final idempotent is given by the composite natural transformation
    \[
      \begin{tikzcd}[row sep = tiny]
        && {} \ar[d, phantom, "\Downarrow" very near start] \\
        J \ar[rrrr, bend left, equals] %\ar[rrrr, phantom, "\Downarrow"] 
        \ar[dr, "\epsilon_{!}"] && {} && J \\
        & J_i \ar[dr] \ar[rr, bend left, equals] \ar[rr, phantom, "\Downarrow"] & 
        & J_i \ar[ur] \\
        && \point \ar[ur, "j"]
      \end{tikzcd}
    \]
    with the upper cell $\id_J\rightarrow \epsilon_{!}$ provided by co-Cartesian pushforward along the final idempotent $\epsilon:\id_I\rightarrow\underline{i}$ of $I$. \qedhere
    
  \end{proof}
  If $\sk C$ is subcanonical and $F:\P(\sk C)$ is in the idempotent completion of $\sk C$, then a presentation of $F$ as the splitting of an idempotent $(X,p)$ is the same data as a final idempotent on $C\slice F$.
  In particular, if $C\slice F$ admits a final idempotent and $L:\P(C)\rightarrow\P(C)$ preserves representable functors, then $C\slice LF$ admits a final idempotent.
  Thus we may apply Proposition \ref{construction/universal/criterion} to rediscover that $\mathtt{idem}$-cocompletion is idempotent on subcanonical sketches.
  Compared to the simpler direct argument, this provides more information about diagrams whose colimits are representable in the idempotent completion.
  
\subsubsection{Categories of restricted shape}

There are interesting universal sets of diagrams defined at the level of the groupoid completion.
A \emph{connectivity class} is a set of spaces $A\subseteq\Space$ satisfying the following properties:
\begin{enumerate}
  \item
    $\point\in A$.
    
  \item
    $A$ is closed under (small) weakly contractible colimits in $\Space$.
    
  \item
    If $(X,x)\rightarrow (Y,y)\rightarrow (Z,z)$ is a fibre sequence of pointed spaces, and $X$ and $Z$ are in $A$, then $Y\in A$.
\end{enumerate}
A diagram $(I\sep U)$, resp.~presheaf $F$, on a category $C$ is said to be \emph{externally $A$-connected} if $I$, resp.~$C\slice F$, has groupoid completion in $A$.
The set of categories having groupoid completion in $A$ is denoted $A-\mathtt{conn}$.
The set $A-\mathtt{conn}$ is closed under cofinal maps of diagrams and, by condition 3) in the definition of a connectivity class, Grothendieck integration.

\begin{lemma*}

  Let $\sk C$ be a sketch whose relations have $A$-connected index categories, and let $F$ be an externally $A$-connected presheaf.
  Then $F^+$ is externally $A$-connected.
  
\end{lemma*}
\begin{proof}

  Recall \eqref{sketch/module/localizer/explicit} that $F^+$ may be expressed as the colimit of a diagram
  \[
    \target(f)\sqcup_{\source(f)}F: 
    \left( \int_{f\in \bar{R}} \langle f, [F\rightarrow\point]\rangle \right)^\triangleleft
    \rightarrow \P(C)
  \]
  whose index category is weakly contractible.
  Since each of $\target(f)$, $\source(f)$, $F$ is $A$-connected, the colimit is $A$-connected by closure under weakly contractible colimits. \qedhere
  
\end{proof}

\begin{corollary*}[Constructions defined by their shape] \label{construction/universal/shape}

  Let $A\subseteq\Space$ be a connectivity class, and let $A-\mathtt{conn}\subseteq\Cat$ be the set of categories whose groupoid completion belongs to $A$.
  Let $\sk C$ be a sketch whose relations have index categories that belong to $A-\mathtt{conn}$.
  Then $A-\mathtt{conn}$-cocompletion is idempotent on $\sk C$.
  
\end{corollary*}
\begin{proof}

  By the Lemma and Proposition \ref{construction/universal/criterion}. \qedhere

\end{proof}

\begin{example*}[$n$-connected categories] \label{construction/universal/connected}

  The set of $n$-connected spaces form a connectivity class for any $n=\N\sqcup\{\infty\}$.
  Indeed, this set is precisely the preimage of $\point$ under the colimit-preserving functor $\tau_n:\Space\rightarrow \Space_{\leq n}$, and the colimit of a constant functor with value $\point$ over a weakly contractible category is contractible.
  Corollary \ref{construction/universal/shape} applies to tell us that $n-\mathtt{conn}$-cocompletion is idempotent on any sketch with $n$-connected relations.
  
\end{example*}

\subsubsection{Finite categories}

 \label{constructible/example/compact}

  An $\infty$-category $I$ is \emph{finite} if it can be exhibited as a finite iterated pushout of finite coproducts of $n$-simplex categories $\Delta^n$.  
  Let $\mathtt{fin*}$ be the union of the set of finite categories and $\{\Idem\}$. 
  Let $\sk C$ be a sketch with finite relations.
  Then $\mathtt{fin*}$ is idempotent on $\sk C$: by \pathcite{htt/compact/generation/from-localization}, $\Mod(\sk C)$ is compactly generated and representable objects are compact.
  It follows from \emph{loc.~cit}.~and \pathcite{htt/compact/presheaf/criterion} that an object of $\Mod(\sk C)$ is compact if and only if it is a retract of a finite colimit of representable objects, that is, if it is in $\Mod^\mathtt{fin*}(\sk C)$.
  
  Similar arguments apply with `finite' replaced by `$\kappa$-small'.

\begin{remark*}

  It does not seem to be so easy to show that $\kappa$-small colimits are composable using `diagrammatic' means, i.e.~by constructing rectifications of $\kappa$-small diagrams of $\kappa$-compact modules and applying Corollary \ref{construction/universal/criterion}.
  
\end{remark*}

\begin{example*}[Non-rectification of finite diagrams]

  Let $C$ be the category of contractible manifolds of class $C^k$ with $k\in\{0,\ldots,\infty\}$.
  Then $\Sheaf(C)$ is not compactly generated, and so we cannot apply \eqref{constructible/example/compact}.
  Accordingly, the question of whether the set of finitely constructible objects of $\Sheaf(C)$ is closed under finite colimits is quite subtle.\footnote{I came to consider this question when studying the axioms of Spivak for derived manifolds, which guarantee closure under finite pushouts of open immersions \cite{spivak2010derived}.}
  
  For example:
  \begin{itemize}
  
    \item
      The universal cover $\pi:\R\rightarrow \R/\Z$ cannot be refined to a map of finite iterated pushout diagrams of contractible open immersions.
      However, $\R/\Z$ can be expressed as a colimit of a diagram $S^1\rightarrow \langle\R\rangle\subset C^k$, and $\pi$ does lift to this diagram.
      So $\pi$ is rectifiable by finite diagrams in $C^k$, but not by finite iterated pushouts.
      
    \item
      I was able to prove that for any \'etale map $f:\R^2\rightarrow S^2$, there exists a decomposition $\R^2=U\cup V$ into contractible open subsets such that the restriction of $f$ to $U$, resp.~$V$, avoids the north, resp.~south pole of $S^2$.
      In particular, $f$ is rectifiable by finite diagrams in $C^\infty$.
      
      Based on these two examples, we may well guess that any \'etale map between finitely constructible manifolds is rectifiable by finite diagrams, from which it would follow that finite colimits of finitely constructible manifolds are finitely constructible.
  
    \item
      On the other hand, space-filling curves provide examples of $C^0$ maps, say, $f:\R\rightarrow S^2$ such that $f^{-1}(p)\simeq \Z$ for all $p:S^2$, and these are quite likely \emph{not} rectifiable by finite diagrams in $C^0$.
      So the same conclusion doesn't hold for arbitrary finitely constructible objects of $\Sheaf(C^0)$.
      (But I couldn't prove this!)
      
  \end{itemize}  

\end{example*}

\subsubsection{Higher Deligne-Mumford stacks} \label{construction/universal/stacks}

The following example doesn't fit exactly into the framework of Definition \ref{construction/universal/definition}, but nonetheless, it belongs to a family of sketches with constructions closed under taking module categories.
Let $\Aff$ be the category of affine schemes, considered as a sketch with relations the set $\mathtt{\acute{e}t-cov}$ of conical diagrams whose image in $\P(\Aff)^{\Delta^1}$ is a covering sieve for the \'etale topology.
Its continuous presheaves are, by definition, sheaves (of spaces) for the \'etale topology: $\Mod(\Aff\sep\mathtt{\acute{e}t-cov})=\Sheaf(\Aff\sep\et)$.

Let $\Aff^\et\subset\Aff$ be the subcategory spanned by all objects and the \'etale morphisms between them, and let $K\subseteq\Cat\slice\Aff$ be the set of diagrams that factor through $\Aff^\et$.
Sheaves constructible for this set are called \emph{higher Deligne-Mumford stacks} \cite{carchedi2020higher}.
We write $\DM \defeq \Mod^K(\Aff,\mathtt{\acute{e}t-cov})$.
By \eqref{construction/slice} we have $\DM\slice X=\Mod^{K\slice X}(\Aff\slice X\sep \mathtt{\acute{e}t-cov})$ for any $X:\Sheaf(\Aff\sep\et)$, and by Lemma \ref{constructible/yoneda/properties} the inclusions
\[
  \Aff\slice X \subset \DM\slice X  \subset\Sheaf(\Aff\sep\et)\slice X
\]
are dense.

The subcategory $\Aff^\et$ itself has the structure of a Grothendieck site (hence sketch), and the inclusion $j:\Aff^\et\hookrightarrow \Aff$ preserves covering sieves (hence relations).
The essential image of $j_!:\Sheaf(\Aff^\et)\rightarrow\Sheaf(\Aff)$ is $\DM$.
In particular, for any $X\in \DM$ the diagram
\[
  [\Aff^\et\slice X \rightarrow \Sheaf(\Aff\sep \et)\slice X ] \quad \in\quad K-\mathtt{colimits}
\]
is a colimit, because $\Aff^\et\slice X\rightarrow\Sheaf(\Aff^\et\sep\et)\slice X$ is a colimit by density of the Yoneda embedding $\Aff^\et\rightarrow\Sheaf(\Aff^\et)$ and because $j_!$ preserves colimits.

Now let $V:J\rightarrow\DM$ be an \'etale diagram of Deligne-Mumford stacks, that is, a diagram that factors through $\Sheaf(\Aff^\et\sep\et)$.
The preceding argument shows that
\[
  \begin{tikzcd}
    \Aff^\et\downarrow J \ar[r, "\source"] \ar[d, "\target"] 
    \ar[dr, phantom, "\Downarrow" marking]
    & \Aff \ar[d, "\Yoneda_{\Aff,K}"] \\
    J \ar[r, "V"] & \DM
  \end{tikzcd}
\]
is a $\Yoneda_{\Aff,K}$-rectification in the sense of Definition \ref{rectification/definition}.
Hence by Corollary \ref{constructible/modules/idempotence}, $\DM\subseteq\Sheaf(\Aff\sep\et)$ is closed under \'etale colimits; or more idiomatically, it \emph{admits \'etale descent}.

\begin{remark*}

  Although we fixed attention on classical schemes for concreteness, this argument continues to hold true for any \emph{strong \'etale blossom} in the sense of \cite[Def.~5.1.7]{carchedi2020higher}.
  That is, $\mathtt{\acute{e}t-cov}$ can be considered as a universal set of diagrams in the 2-category of strong \'etale blooms (rather than $\Cat$), and the monad it induces is idempotent.
  Other examples of strong \'etale blooms can be constructed from $C^r$ manifolds ($r\in\N \sqcup\{\infty,\omega\}$), complex manifolds, non-Archimedean analytic spaces, and derived geometric generalisations thereof.
  
\end{remark*}

\printbibliography
\end{document}